\documentclass[11pt,draft,twoside]{article}

\usepackage{amsmath,bm}
\usepackage{amssymb}
\usepackage{fancyhdr}
\usepackage{latexsym}
\usepackage{bbding}
\usepackage{mathrsfs}
\usepackage{exscale}
\usepackage{relsize}
\usepackage{wasysym}
\usepackage{cite}
\usepackage{multicol,graphics}
\usepackage{color}

\makeatletter
\@addtoreset{equation}{section}
\makeatother

\allowdisplaybreaks[4]
\def\to{\rightarrow}
\def\e{\left}
\def\r{\right}
\def\l{\leq}
\def\g{\geq}

\def\i{\infty}

\def\R{\mathbb{R}}

\def\p{\partial}
\def\P{\Phi}
\def\dl{\delta}
\def\vp{\varepsilon}

\def\bar{\overline}

\renewcommand\tilde{\widetilde}
\newcommand{\be}{\begin{equation}}
\newcommand{\ee}{\end{equation}}
\newcommand{\bc}{\begin{cases}}
\newcommand{\ec}{\end{cases}}
\newcommand{\bes}{\begin{equation*}}
\newcommand{\ees}{\end{equation*}}
\newcommand{\bls}{\begin{align*}}
\newcommand{\els}{\end{align*}}
\newcommand{\baa}{\begin{array}}
\newcommand{\eaa}{\end{array}}
\newcommand{\ba}{\begin{eqnarray}}
\newcommand{\ea}{\end{eqnarray}}
\newcommand{\bas}{\begin{eqnarray*}}
\newcommand{\eas}{\end{eqnarray*}}
\newcommand{\bd}{\begin{description}}
\newcommand{\ed}{\end{description}}

\newtheorem{theo}{\bf Theorem}[section]
\newtheorem{lem}[theo]{\bf Lemma}

\newtheorem{defi}[theo]{\bf Definition}
\newtheorem{rem}[theo]{\bf Remark}

\newenvironment{pr}[1][Proof]{\noindent\textbf{#1.} }{\hfill $\Box$}
\allowdisplaybreaks
\begin{document}
\date{}
\title{\bf{Transition fronts of monotone bistable reaction-diffusion systems around an obstacle}}
\author{Yang-Yang Yan$^{a,b}$ \quad  Wei-Jie Sheng$^{a,}$\thanks{Corresponding author
 (E-mail address: shengwj09@hit.edu.cn).}
\\
\footnotesize{$^a$ School of Mathematics, Harbin Institute of Technology}, \\
\footnotesize{Harbin, Heilongjiang, 150001, People's Republic of China}\\
\footnotesize{$^b$ Aix Marseille Univ, CNRS, I2M, Marseille, France}
}
\maketitle

\begin{abstract}

This paper is concerned with the interaction between a planar traveling front and a compact obstacle for monotone bistable reaction-diffusion systems in exterior domains. By constructing appropriate sub- and supersolutions, we first establish the existence, uniqueness and monotonicity of the entire solution emanating from a planar traveling front. In particular, we verify that regardless of the shape of the obstacle, the entire solution locally converges to a stationary solution as time tends to infinity. Under the complete propagation assumption, we further show that the entire solution recovers to the same planar traveling front as time tends to infinity after passing the obstacle, and it constitutes a transition front. In addition, we provide some geometric conditions on the obstacle to ensure that the complete propagation assumption is nonempty. Finally, we apply our theoretical results to the Lotka-Volterra competition-diffusion system.

\textbf{Keywords}: Transition fronts; sub- and supersolutions; bistable;  reaction-diffusion systems.

\noindent\textbf{2020 AMS Subject Classification}: 35C07; 35K57; 35B08.
\end{abstract}

\section{Introduction}
In this paper, we study the propagation phenomenon of the following reaction-diffusion system
\begin{align}\label{1.1}
\begin{cases}
{ u}_t=   D\Delta u+   F( u),& t\in\mathbb R,\ x\in\Omega=\mathbb R^N\backslash K,\\
{\nu}\cdot\nabla u=\bm 0, &t\in\mathbb R,\ x\in\p\Omega=\p K,
\end{cases}
\end{align}
where $x=(x_1,\dots,x_N)$, $ u=(u_1,\cdots,u_m)$,
$F(u)=(F_1(u),\cdots,F_m(u))$, $\bm0=(0,\cdots,0)\in \R^m$, $N,m\geq2$, ${\nu}=\nu(x)=(\nu_1(x),\cdots,\nu_N(x))$ is the outward unit normal for the point $x$ on the boundary $\partial\Omega$, and the obstacle $K$ is a compact set in $\mathbb R^N$ with smooth boundary.

For the sake of convenience, we first introduce some notations. Let $\bm1= (1,\cdots,1)\in\R^m$. For any vectors $A=(a_1,\cdots,a_m)$ and $B=(b_1,\cdots,b_m)$ in $\R^m$, the symbol $A\ll B$ means $a_i<b_i$ for each $i=1,\cdots,m$, and  $A\leq B$ means $a_i\leq b_i$ for each $i=1,\cdots,m$.
The interval $[A,B]$ denotes the set of $V\in\mathbb R^m$ such that $A\leq V\leq B$,
and $(A,B)$ denotes the set of $V\in\mathbb R^m$ such that $A\ll V\ll B$.

Throughout the paper, we assume that the following conditions hold.
\begin{itemize}
  \item [(A1)] $ D$ is a diagonal matrix of order $m$ with entries $D_i>0$ on the main diagonal for $i=1,2,\cdots,m$.

\item[(A2)] $F$ has two stable equilibria $\bm 0$ and $\bm 1$, that is, $F(\bm0)= F(\bm1)=\bm0$, and all eigenvalues of Jacobian matrices $ F'(\bm 0)$ and $ F'(\bm 1)$ lie in the open left-half complex plane.

\item[(A3)] There exist two positive vectors $R_0=(r_{01},\cdots,r_{0m})$, $R_1=(r_{11},r_{12},\cdots,r_{1m})$ and two positive numbers $\lambda_0$, $\lambda_1$ such that $  F'(\bm0)R_0\leq-\lambda_0 R_0$ and $ F'(\bm1) R_1\leq-\lambda_1 R_1$.

\item[(A4)] The nonlinearity $F(u)$ is defined on an open domain $\mathcal A\supset[\bm0,\bm1]$. Moreover, $F\in C^1([\bm0,\bm1],\R^m)$
satisfies $\frac{\partial{F_i}}{\partial{u_j}}({u})\geq0$ for all $u\in[\bm0,\bm1]$ and $1\leq i\neq j\leq m$.
\end{itemize}

The condition {{(A1)}} implies that all components of $u$ diffuse in $\Omega$ with a positive rate and
the system \eqref{1.1} is not degenerate.
The condition {(A2)} indicates that \eqref{1.1} is a bistable system.
The condition {(A3)} plays a crucial role in the construction of sub- and supersolutions. It implies that there are two Perron-Frobenius directions $R_0$ and $R_1$ at stable equilibria $\bm0$ and $\bm1$.
 In fact, by the Perron-Frobenius theorem, ${(A3)}$ holds if $F'(\bm0)$ and $F'(\bm1)$ are further irreducible.
The condition {(A4)} ensures that the system \eqref{1.1} is  cooperative
and allow us to apply the comparison principle within the order interval $[\bm0,\bm1]$.
 These conditions were utilized to study the stability of planar traveling fronts \cite{OT, Tsai},
the existence and stability of V-shaped traveling fronts \cite{wangzhicheng2012},
as well as the existence of nonstandard transition fronts \cite{SW18} of bistable reaction-diffusion systems.

In general, conditions {(A1)}-{(A4)} are not sufficient to guarantee the existence of planar traveling front connecting $\bm 0$ and $\bm1$ for system \eqref{1.1} in $\R^N$. Therefore, we assume in this paper that when $\Omega=\mathbb R^N$ (with no boundary conditions),
 system \eqref{1.1} admits a unique (up to shifts) planar traveling front of the type
$$
 u(t,x)=\Phi(x\cdot{e}+c t)=\left(\Phi_1(x\cdot{e}+c t),\cdots,\Phi_m(x\cdot{e}+c t)\right),
$$
where $ e\in\mathbb S^{N-1}$ is the propagation direction ($\mathbb S^{N-1}$ denotes the unit sphere of $\mathbb R^N$), $c\in\mathbb R$ is the wave speed and $\Phi:\mathbb R\to[\bm0,\bm1]$ is the propagation profile satisfying
\begin{equation*}
\begin{cases}
 D_i\Phi_i{''}(\xi)-c \Phi_i{'}(\xi)+F_i(\P(\xi))=0,&\xi\in\mathbb R,\\
\P_i{'}(\xi)>0,&\xi\in\mathbb R,\\
\P_i(-\infty)=0,\ \P_i(+\i)=1
\end{cases}
\end{equation*}
 for each $i=1,\cdots,m$.
 By virtue of \cite[Chapter 3]{VVV}, there exist two  positive constants $a$ and $b$ such that for each $i=1,\cdots,m$,
\begin{align}\label{1.a}
\bc
\Phi_{i}(\xi),\ |\Phi'(\xi)|,\ |\Phi''(\xi)|\leq ae^{-b\xi}, &\xi\geq0,\\
1-\Phi_{i}(\xi),\ |\Phi'(\xi)|, \ |\Phi''(\xi)|\leq  ae^{b\xi}, & \xi<0.
\ec
\end{align}
We further assume that there exist constants $\bar K_1>0$ and $\mathcal C>0$ such that, for each $i=1,\cdots,m$,
\begin{align}\label{impo}
-\bar K_1\Phi'_i(\xi)\leq\Phi_i''(\xi)\leq\bar K_1\Phi'_i(\xi)\ \ \text{ for }\xi\in\mathbb R
\end{align}
and
\begin{align}\label{impo1}
\Phi''_i(\xi)\leq0 \ \text{ for }\xi\in(\mathcal C,+\i].
\end{align}
In fact, if $F$ is of class $C^{1+\theta}$ on $\mathcal A$ for some $\theta\in(0,1)$, and the off-diagonal elements of Jacobian matrices $F'(\bm0)$ and $F'(\bm1)$ are positive,
then \eqref{impo} and \eqref{impo1} hold by \cite[Theorem 5.5]{SW18}.
It is well known that the profile of planar traveling front is invariant in the moving
frame with speed $c$, and its level sets are parallel hyperplanes orthogonal to the propagation direction $ e$. For more results on the existence, stability and other qualitative properties of planar traveling fronts, we refer to \cite{aronson,conley,fife,Gardner,GJS,Kan-on,Kan-on2,LX,OT,Tsai,VVV} and references therein.
In the remainder of this paper, we always assume that  $e=(1,0,\cdots,0)$ and
 $c > 0$ (otherwise, we can make $c>0$ by replacing the roles of $\bm0$ and $\bm1$).

In addition to planar traveling fronts, many types of nonplanar traveling fronts are also known to exist in  $\R^N$,
such as V-shaped fronts,
conical shaped fronts,  pyramidal fronts and so on.
The level sets of these nonplanar traveling fronts is no longer hyperplanes.
For more results on  nonplanar traveling fronts,
see \cite{H2,H3,NT,NT1,T1,T2,T3,wangzhicheng2012,wlr} and references therein. 
Both planar and nonplanar traveling fronts share some common properties.
For instance, they converge to one of steady states
 uniformly in time far away from their level sets,
 their profiles keep invariant in a moving frame, as well as their level sets
  move with a constant speed.
  Based on these fundamental observations, a fully general notion of traveling front, known as  transition front, was proposed by
   Berestycki and Hamel \cite{BH1,BH2} (see also \cite{shen} in the one-dimensional setting).
    In particular, they showed that this new definition covers classical cases
    so that we can consider many new situations with a unified framework.
For scalar reaction-diffusion equations (that is, $m = 1$) of bistable type in $\mathbb R^N$,
Hamel\cite{H1} constructed a nonstandard transition front that behaves as three moving planar
 traveling fronts as time goes to $-\i$ and as a V-shaped traveling front as time goes to $+\i$.
 Moreover, he showed that any transition front connecting $0$ and $1$ has a global mean speed
  coinciding with the absolute value of the unique planar wave speed. Sheng and Guo \cite{sg} and Sheng and
   Wang \cite{SW18} generalized the results of \cite{H1} to the time-period bistable
    reaction-diffusion equations and  the time-period bistable reaction-diffusion
    systems, respectively.
 For more results on the existence and stability of transition fronts, we refer to \cite{ads,bgw,dhz,g,H4,hr,hr1,NR,NR1,S2,S3,S4,sww}
 and references therein.

Recently, significant attention has been devoted to studying the propagation phenomenon when an obstacle $K$ appears in the whole space,
particularly in exploring how the geometric properties of the obstacle influence propagation.
In \cite{BHM2009}, Berestycki, Hamel and Matano thoroughly studied the interaction between 
the obstacle $K$ and
a planar traveling front for the bistable reaction-diffusion equation
\begin{align}\label{scalar}
\begin{cases}
u_t=\Delta u+f(u),& t\in\R, \ x\in\Omega,\\
\nu\cdot\nabla u=0,& t\in\R, \ x\in\p\Omega,
\end{cases}
\end{align}
where the nonlinearity $f$ satisfies
\begin{align*}
&\text{$\exists\ \vartheta\in(0,1)$, $f(0)=f(\vartheta)=f(1)=0$, $ f'(0)<0$, $ f'(1)<0$,}\\
&~~~~\text{$f<0$  on $(0, \vartheta)$, $f>0$ on $(\vartheta, 1)$ and $ \int_{0}^{1} f(s) d s>0$.}
\end{align*}
More precisely,
they observed that whatever the shape of $K$ may be, there is  an entire solution $u(t,x)$ emanating from the planar traveling front,
and $u(t,x)\to u_{\i}(x)$ as $t\to+\infty$ locally uniformly in $x\in\overline\Omega$,
where $u_{\i}:\overline\Omega\to(0,1]$ is a $C^2(\overline\Omega)$ solution of the elliptic problem
\begin{equation*}
 \Delta u_{\i}+f(u_{\i})=0\mbox{ in }\Omega, \ \nu\cdot\nabla u_{\i}=0\mbox{ on }\partial\Omega
 \ \text{ and }\ u_{\i}(x)\to1\text{ as }|x|\to+\infty,
\end{equation*}
and $|\cdot|$ denotes the Euclidean norm in $\mathbb R^N$.
In particular, the entire solution $u(t,x)$ is verified to be  a transition front connecting $0$ and $u_{\i}(x)$.
When the obstacle $K$ is {\it star-shaped}
\footnote{We say $K$ is star-shaped if either $K=\emptyset$ or there is $x\in \rm{Int}(K)$ such that, the point $x+t(y-x)\in K$ and $\nu_{K}(y)(y-x)\geq0$ for all $y\in\partial K$ and $t\in[0,1]$, where $\nu_{K}(y)$ denotes the outward unit normal to $K$ at $y$.}
or {\it directionally convex with respect to a hyperplane}
\footnote{We say $ K$ is directionally convex with respect to a hyperplane $P$ if there exists a hyperplane $P=\{x\in\R^N: x\cdot\tilde e=l\}$ with some $\tilde e\in \mathbb S^{N-1}$ and $l\in\mathbb R$, such that for every line $\Sigma$ parallel to $\tilde e$, the set $K\cap\Sigma$ is either a single line segment or empty, and $K\cap P$ is equal to the orthogonal projection of $K$ onto $P$.},
it was proved that $u_\i\equiv1$ and the entire solution $u(t,x)$
will recover to the same planar traveling front after passing the obstacle.
In these cases, the entire solution $u(t,x)$ propagates completely in the sense of
\begin{align*}
u(t,x)\to1\ \text{ locally uniformly in }x\in\bar\Omega \text{ as }t\to+\i.
\end{align*}
They further showed that $u_\i<1$ when $K$ is an almost annular region with a small channel. In this case,
the  entire solution $u(t,x)$ is partly blocked in the sense of
\begin{equation*}
u(t,x)<1\text{ for some points }x\in\overline\Omega \text{ and any }t\in\R.
\end{equation*}
Besides, Guo, Hamel and Sheng \cite{ghs} proved that all transition fronts of \eqref{scalar} connecting $0$ and $u_\i(x)$
propagate with the same global mean speed equal to the unique
planar wave speed.
In exterior domains, transition fronts connecting $0$ and $1$ with other shapes are also known to exist.
For example, Guo and Monobe \cite{Guomonobe} proved that problem \eqref{scalar} admits an entire solution
originating from any homogeneous transition front (which means that the transition front defined in $\R\times\R^N$) in exterior domains.
In addition, by providing complete propagation condition, they indicated that the entire solution originating from a V-shaped traveling front is a transition front connecting $0$ and $1$,
and it converges to the same V-shaped traveling front as time tends to infinity.
More results on the propagation phenomenon of transition fronts in exterior domains can be found in \cite{LiLinlin,jwz,qls,ys} and references therein.

In the present paper, we aim to study the interaction between an obstacle and a planar traveling front in exterior domains for the monotone bistable reaction-diffusion system \eqref{1.1}.
Utilizing sub- and supersolution methods and the comparison principle,
we first establish the existence, uniqueness and monotonicity of the entire solution
emanating from a planar traveling front, and show that the entire solution always converges
to a stationary solution as time tends to $+\i$.
Under the complete propagation assumption, we then
verify that the entire solution can recover to the same planar traveling front after passing the obstacle.
Furthermore, by using the sliding method, we prove that the entire solution initially obtained  propagates completely
when the obstacle is star-shaped or directionally convex with respect to
a hyperplane.

Unlike the scalar case, the sign of nonlinearity $F$  in the neighborhoods of the stable equilibria $\bm0$ and $\bm1$ is unknown,
which brings some difficulties in the construction of  sub- and supersolutions.
 Fortunately, the conditions {(A3)} and {(A4)} imply that
  $F$ has similar behavior to a scalar bistable nonlinearity near $\bm0$ and $\bm1$ along two Perron-Frobenius directions $R_0$ and $R_1$.
  Consequently, we are able to construct sub- and supersolutions by appealing to these Perron-Frobenius directions and the planar traveling fronts.

Before stating our main results, we introduce the definition of transition fronts and their global mean speed for the system \eqref{1.1}
in exterior domain $\Omega$.
For any two subsets $A$ and $B$ in $\Omega$, we set
\begin{equation*}
d_\Omega(A,B)=\inf\{d_\Omega(x,y):(x,y)\in A\times B\}\ \text{ and }\ d_\Omega(x,A)=d_\Omega(\{x\},A),
\end{equation*}
where $d_{\Omega}$ is the geodesic distance in ${\Omega}$. Consider two families $(\Omega^-_t)_{t\in\mathbb R}$ and $(\Omega^+_t)_{t\in\mathbb R}$ of open nonempty subsets of $\Omega$ such that
\begin{equation}\label{1.c}
\forall\ t\in\mathbb R, \
\begin{cases}
\Omega_t^-\cap\Omega_t^+=\emptyset,\\
\partial\Omega_t^-\cap\Omega=\partial\Omega_t^+\cap\Omega=:\Gamma_t,\\
\Omega_t^-\cup\Gamma_t\cup\Omega_t^+=\Omega,\\
\sup\{d_\Omega(x,\Gamma_t):x\in\Omega_t^+\}=\sup\{d_\Omega(x,\Gamma_t):x\in\Omega_t^-\}=+\infty
\end{cases}
\end{equation}
and
\begin{equation}\label{1.d}
\bc
\inf\{\sup\{d_{\Omega}(y,\Gamma_t): y\in \Omega_t^+, d_{\Omega}(y,x)\leq r\big\}: t\in \mathbb{R},\ x\in \Gamma_t\} \rightarrow  +\infty,\\
\inf\{\sup\{d_{\Omega}(y,\Gamma_t): y\in \Omega_t^-, d_{\Omega}(y,x)\leq r\big\}: t\in \mathbb{R},\ x\in \Gamma_t\} \rightarrow+\infty
\ec
\ \ \text{as $r\to+\infty$.}
\end{equation}
Notice that \eqref{1.c} in particular implies that  $\Gamma_t$ divides $\Omega$ into two disjoint unbounded open sets $\Omega_t^-$ and $\Omega_t^+$, which also ensure that the interface $\Gamma_t$ is nonempty for each $t\in\mathbb R$.
As far as (\ref{1.d}) concerned, for each $x\in\Gamma_t$ and $t\in\mathbb R$,
the point $x\in\Gamma_t$ is not too far away from the centers of two large balls included in $\Omega_t^+$ and $\Omega_t^-$.
Moreover, the sets $\Gamma_t$ are assumed to be made of a finite number of graphs. Namely, there is an integer $n_0\geq1$ such that, for each $t\in\mathbb R$, there are $n_0$ open subsets $\omega_{i,t}\subset\mathbb R^{N-1}$ ($1\leq i\leq n_0$), $n_0$ continuous maps $\psi_{i,t}:\omega_{i,t}\to\mathbb R$ and $n_0$ rotations $R_{i,t}$ of $\mathbb R^N$ such that
\begin{equation}\label{1.e}
\Gamma_t\subset\bigcup\limits_{1\leq i\leq n_0}R_{i,t}\left(\{x\in\mathbb R^N: x'=(x_1,\cdots,x_{N-1})\in\omega_{i,t}, \ x_N=\psi_{i,t}(x')\}\right).
\end{equation}

\begin{defi}[\cite{BH1,BH2}]\label{da}
For system \eqref{1.1}, a transition front connecting $\bm0$ and $\bm1$ is a classical
solution $u:\mathbb{R}\times\overline{\Omega} \rightarrow\R^m$ for which there exist
some sets $(\Omega_t^{\pm})_{t\in \mathbb{R}}$ and $(\Gamma_t)_{t\in \mathbb{R}}$
satisfying \eqref{1.c}, \eqref{1.d}, \eqref{1.e}, and for any $\varepsilon>0$, there exists
a constant $M_{\varepsilon}>0$ such that
\begin{equation*}
\begin{cases}
\forall\,t\in \mathbb{R},\ \ \forall\,x\in \overline{\Omega_t^+}, \ \ \left(d_{\Omega}(x,\Gamma_t)\geq M_{\varepsilon}\right)\Rightarrow
\left( u(t,x)\geq (1-\varepsilon)\cdot\bm1\right) ,\vspace{3pt}\\
\forall\,t\in \mathbb{R},\ \ \forall\,x\in \overline{\Omega_t^-}, \ \ \left(d_{\Omega}(x,\Gamma_t)\geq M_{\varepsilon}\right)\Rightarrow
\left( u(t,x)\leq \varepsilon\cdot\bm1\right) .\end{cases}
\end{equation*}
Moreover, $ u$ is said to have a global mean speed $\gamma(\geq0)$ if
\begin{equation*}
\frac{d(\Gamma_t,\Gamma_s)}{|t-s|}\to\gamma~~\text{as}~~|t-s|\to+\infty.
\end{equation*}
\end{defi}

We now state our main results. The first result addresses that  system \eqref{1.1} admits a unique time-increasing entire solution
originating from a planar traveling front.
In particular, the entire solution locally converges to a stationary solution of  \eqref{1.1} as time goes to $+\infty$.
Note that no additional requirements are placed on the geometry of obstacle $K$ in the following theorem.
\begin{theo}{\label{t1}}
There exists a unique entire solution $u(t,x)$ of \eqref{1.1} such that $\bm0\ll u(t,x)\ll\bm1$, $ u_t(t, x)\gg\bm0
$ for  $t\in\mathbb R$ and $x\in\bar\Omega$, and
\begin{equation}\label{1.5}
 u(t,x)- \P(x_1+c t) \to \bm 0\ \text{ uniformly in } x\in\bar\Omega\text{ as }t\to-\infty.
\end{equation}
Moreover, there exists a classical solution $ u_\i(x)$ of elliptic system
\begin{align}\label{ui}
\bc
D\Delta u_\i(x)+ F(u_\i(x))=\bm0,&x\in\Omega,\\
 \nu\cdot \nabla u_\i(x)=\bm0,&x\in\p\Omega,\\
\bm0\ll u_\i\leq\bm1,&x\in\bar\Omega,\\
u_\i(x)\to1\text{ as }|x|\to+\i
\ec
\end{align}
such that $ u(t,x)\to u_\i(x)$ locally uniformly in $x\in\bar\Omega$ as $t\to+\i$.
\end{theo}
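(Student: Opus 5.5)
The plan follows the Berestycki–Hamel–Matano scheme for the scalar problem \eqref{scalar}, adapted to systems via the Perron–Frobenius directions $R_0,R_1$ of (A3) and the comparison principle on $[\bm0,\bm1]$ given by (A4).

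\emph{Sub- and supersolutions for $t\le -T$.} Fix $\zeta:\R^N\to[0,1]$ smooth, with $\zeta=0$ near $K$ and $\zeta=1$ outside a large ball containing $K$. Take $\xi(t)$ solving $\dot\xi=\mu e^{\mu t}$ with $\xi(-\i)=0$, so $\xi(t)=e^{\mu t}$, and $\mu>0$ small. Set
\begin{align*}
\underline u(t,x)&=\max\bigl\{\Phi(x_1+ct-\xi(t))-e^{\mu t}R_0 ,\ \bm 0\bigr\},\\
\bar u(t,x)&=\min\bigl\{\Phi(x_1+ct+\xi(t))+e^{\mu t}R_1 ,\ \bm1\bigr\}.
\end{align*}
For the subsolution, in the region $x_1\ge -c|t|/2$ we replace $\underline u$ by $\zeta(x)\cdot(\ldots)$ so that it vanishes near $K$. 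This uses that $\Phi\le ae^{b(x_1+ct)}$ is exponentially small near $K$ for $t\to-\i$; it also uses \eqref{1.a} to control the cutoff error and the Neumann condition (trivial where $\underline u=\bm0$). The supersolution needs no cutoff, since $\nu\cdot\nabla\Phi(x_1+ct+\xi)$ is exponentially small near $\p K$. That error is absorbed by the $e^{\mu t}R_1$ term, and the Neumann condition is handled with a small correction, or with the inequality oriented correctly using the sign of $\nu_1$.

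The differential inequalities are checked in three zones. Near $\bm0$ and near $\bm1$ we use (A3): $F(\Phi\pm\delta R_i)-F(\Phi)\approx \pm\delta F'(\cdot)R_i\lesssim \mp\lambda_i\delta R_i/2$. In the middle zone, where $\Phi'\ge\kappa\gg0$, we use the term $\dot\xi\Phi'$ together with \eqref{impo}. This is the main technical obstacle. The sign of $F$ near the equilibria is unknown componentwise, and the cutoff and boundary terms must be dominated uniformly for $t\le -T$. I expect to choose $\mu<\min\{\lambda_0,\lambda_1,bc\}/2$ and to exploit the exponential decay \eqref{1.a}.

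\emph{Existence, uniqueness, monotonicity.} Let $u_n$ solve \eqref{1.1} for $t\ge -n$ with $u_n(-n,\cdot)=\underline u(-n,\cdot)$. Comparison gives $\underline u\le u_n\le\bar u$ on $[-n,-T]$, and also $u_n\le u_{n+1}$ because $u_{n+1}(-n)\ge\underline u(-n)$. Parabolic estimates then yield a limit $u$, an entire solution with $\underline u\le u\le\bar u$, which gives \eqref{1.5}.

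For monotonicity, $\underline u$ is nondecreasing in $t$ once $\mu$ is suitably chosen relative to $c$, and $\underline u$ is a subsolution. Hence $u_n(t+s)\ge u_n(t)$ for $s>0$, so $u_t\ge\bm0$. The strong maximum principle for cooperative systems, applied componentwise with the coupling terms $\p_jF_i\ge0$, then gives $u_t\gg\bm0$ and $\bm0\ll u\ll\bm1$.

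For uniqueness, let $v$ be another solution satisfying \eqref{1.5}. For any $\varepsilon>0$, \eqref{1.5} gives $v(t_0)\le\Phi(x_1+ct_0+\eta)+\varepsilon R_1$ and the corresponding lower bound with $-\eta$, $-\varepsilon R_0$, for $t_0$ very negative. Comparing with the shifted sub/supersolutions started at $t_0$, of the form $\Phi(x_1+ct\pm(\eta+\sigma\varepsilon(1-e^{-\mu(t-t_0)})))\pm\varepsilon e^{-\mu(t-t_0)}R$, we get $u(t-\tau)\le v(t)\le u(t+\tau)$ with $\tau=O(\eta+\varepsilon)$. Letting $\varepsilon,\eta\to0$ gives $v=u$.

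\emph{Large time.} Since $u$ is monotone in $t$ and bounded, $u(t,\cdot)\to u_\i$ locally uniformly, and $u_\i$ solves \eqref{ui}, with $\bm0\ll u_\i\le\bm1$ by monotonicity. To get $u_\i\to\bm1$ at infinity, compare $u$ on regions far from $K$. Here I place a compactly supported subsolution of the form $\bm1-$(something small along $R_1$) on large balls $B_R(x_0)$ with $\mathrm{dist}(x_0,K)>R$. Once $u\ge$ it there, which happens because the planar front passes with speed $c>0$ and $u\ge\underline u$ for $t\le -T$ (extended by comparison), the Dirichlet bump grows to a stationary solution close to $\bm1$ on $B_{R/2}(x_0)$. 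Such bumps exist for bistable systems with $c>0$ (positive energy), and I would obtain them from the front itself, e.g.\ radially truncated fronts. This yields $u_\i(x)\ge(1-\varepsilon)\bm1$ for $|x|$ large.
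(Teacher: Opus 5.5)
\textbf{Main gap: uniqueness.} Your barriers are built on $\Phi$. At best they give $\Phi(x_1+ct-\tau)-E(t)\le v(t,x)\le \Phi(x_1+ct+\tau)+E(t)$, and the same for $u$, with some additive error $E(t)$. Such bounds cannot yield $u(t-\tau')\le v(t)\le u(t+\tau')$: a time shift cannot absorb an additive error where $|x_1+ct|$ is large, because $u_t\to\bm{0}$ there.

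You also cannot make $E(t)$ vanish by sending $t_0\to-\infty$, because the $\Phi$-based barriers violate the Neumann condition on $\partial K$. There the flux is $\nu_1\Phi'$, and $\nu_1$ takes both signs on $\partial K$. The term $\varepsilon e^{-\mu(t-t_0)}R$ has zero normal derivative, so it does not help. A boundary corrector of amplitude $\varepsilon e^{-\mu(t-t_0)}$ cannot dominate the flux on $[t_0,-T]$ once $t_0$ is very negative, since the flux is of order $e^{b(L+ct)}$ and increases with $t$.

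The idea you are missing is to build the barriers on $u$ itself, as the paper does:
$w^\pm(t,x)=u(\varrho^\pm(t),x)\pm\varepsilon Q(x_1+c\varrho^\pm(t))\zeta(x)e^{-\eta t}$, with $\varrho^\pm(t)=t+t_0\pm\rho\varepsilon(1-e^{-\eta t})$ and $\eta$ as in \eqref{2C.b}.
\begin{itemize}
\item They inherit the Neumann condition from $u$. The extra term has normal derivative $\varepsilon Q_0e^{-\eta t}\gg\bm{0}$, because $\nu\cdot\nabla\zeta=1$ and $Q\equiv Q_0$ near $K$.
\item The zones near $\bm{0}$ and $\bm{1}$ are handled by the Perron--Frobenius stability.
\item In the transition zone the errors are absorbed by the term $\varepsilon\rho\eta e^{-\eta t}u_t$. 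This requires the uniform bound \eqref{hk}, $u_t\ge\hat\kappa$ on $\{|x_1+ct|\le C_\delta\}$ for $t\le T_\delta$, which the paper proves by a limiting argument and the strong maximum principle.
\end{itemize}
At a fixed time the additive part is $O(\varepsilon e^{-\eta(t-t_0)})$. Letting $t_0\to-\infty$ therefore gives exactly $u(t-\rho\varepsilon)\le v(t)\le u(t+\rho\varepsilon)$.

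\textbf{Other steps that fail as written.}

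(a) Your $\underline u$ is not nondecreasing in $t$ for any $\mu>0$. As $x_1\to+\infty$ it tends to $\bm{1}-e^{\mu t}R_0$, which decreases in $t$. What actually gives $u_n(-n+s,\cdot)\ge u_n(-n,\cdot)$ is that the datum $\underline u(-n,\cdot)$ is a stationary subsolution, $D\Delta\underline u+F(\underline u)\ge\bm{0}$. This is an analogous check in which the favorable term is $c\Phi'$.

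(b) One fixed direction per barrier does not work, since (A3) says nothing about the sign of $F'(\bm{0})R_1$ or $F'(\bm{1})R_0$. You need a direction depending on the front coordinate, such as $Q$ in \eqref{2C.a}. It equals $R_0$ near $\bm{0}$ and $\eta_1R_1$ near $\bm{1}$, and its derivative terms are absorbed where $\Phi'\ge\kappa$. Separately, the boundary flux of your supersolution is not absorbed by $e^{\mu t}R_1$. The paper multiplies the perturbation by $\zeta$, with $\nu\cdot\nabla\zeta=1$ and $\|\Delta\zeta/\zeta\|_\infty$ small, and uses amplitude $e^{\eta t}$ with $\eta\le bc/2$. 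This amplitude dominates $|\Phi'|\le ae^{b(L+ct)}$ on $\partial K$ (Lemma~\ref{l2.2}).

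(c) For $u_\infty\to\bm{1}$, the bound $u\ge\underline u$ holds only for $t\le -T$, hence only ahead of the obstacle. Nothing in your argument carries the state $\bm{1}$ behind or beside $K$, where the front may be partly blocked. The paper uses Lemma~\ref{key}: a radially expanding subsolution built from $\Phi$ with speed $3c/4$. It upgrades $u\ge(1-\delta q_*)\bm{1}$ on $B(x_0,R_1)$ to the same bound on $B(x_0,R_2)$ after the fixed time $\tilde T$, whenever $B(x_0,R_3)\subset\Omega$. The paper then iterates this along a chain of balls from $\{x_1\ge\xi_0\}$ to any point with $|x|$ large. Sliding a strict, compactly supported stationary subsolution along paths in $\Omega$ would also work, but some spreading argument of this kind is indispensable.
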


On the other hand, by providing complete propagation condition, we prove that the entire solution obtained in Theorem \ref{t1} will recover to the same planar traveling front as time goes to $+\i$.
\begin{theo}\label{t2}
Let $u$ be the entire solution obtained in Theorem \ref{t1}. If $ u$ propagates completely in the sense of
\begin{align}\label{comp}
u(t,x)\to\bm1\ \text{ locally uniformly in }x\in\bar\Omega\text{ as } t\to+\i,
\end{align}
then
\begin{equation}\label{l+}
u(t,x)-\P(x_1+c t) \to \bm 0\ \text{ uniformly in } x\in\bar\Omega \text{ as } t\to+\infty .
\end{equation}
Furthermore, there holds
\begin{align}\label{UP}
u(t,x)-\P(x_1+c t) \to \bm 0\ \text{ uniformly in }t\in\R\text{ as } |x|\to+\infty.
\end{align}
\end{theo}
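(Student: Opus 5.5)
The plan follows the scalar strategy of Berestycki--Hamel--Matano, with the scalar bistable sub/supersolutions replaced by perturbations along the Perron--Frobenius directions $R_0$ and $R_1$ from (A3). I first prove \eqref{l+}, and then deduce \eqref{UP} from \eqref{l+}, \eqref{1.5} and a far-field comparison on bounded time intervals.

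\textbf{Step 1: large-time behavior.} I choose $R>0$ with $K\subset B_R$. By \eqref{comp}, for any small $\delta>0$ there is $T_1$ such that $u(T_1,x)\geq(1-\delta)\bm1$ for $|x|\le R+L$, with $L$ large. Next I need a lower bound that stays valid far from the obstacle. By \eqref{1.5} and $u_t\gg\bm0$, we have $u(t,x)\ge \P(x_1+ct)-\delta\bm1$ for all $t\le T_0$ and all $x$. Hence $u(T_1,\cdot)\ge \P(x_1+cT_0)-\delta \bm1$ on $\bar\Omega$, and the estimate near $K$ is even better.

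I then build a subsolution on $\bar\Omega$ for $t\ge T_1$ of the form
\[
\underline u(t,x)=\max\Big\{\P\big(x_1+c(t-T_1)+cT_0-\sigma\delta(1-e^{-\mu (t-T_1)})\big)-\delta\, q(x_1+c t)\,e^{-\mu(t-T_1)},\ \bm0\Big\}.
\]
Here $q$ interpolates between $R_0$ and $R_1$, in the spirit of \cite{wangzhicheng2012,SW18}. The perturbation is taken componentwise, with the max taken componentwise as well. This uses cooperativity (A4) and \eqref{impo} to absorb the error terms in the transition zone.

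The Neumann boundary is the difficulty. Near $K$ the function $u(T_1,\cdot)$ is already close to $\bm1$. So I intend to patch $\underline u$ with the constant subsolution $(1-\delta e^{-\mu t})$-type near $\bm1$, using the direction $R_1$. Alternatively, I can run the argument only after the level sets of $\P(x_1+ct)$ have passed $K$, so that $\underline u\approx \bm 1-\delta R_1 e^{-\mu t}$ near $K$. Then $\nu\cdot\nabla\underline u$ is exponentially small, and it can be absorbed by adding a small correction supported near $K$, exactly as in BHM. The supersolution is easier: $u\le\bm1$, and $\bar u(t,x)=\P(x_1+ct+\sigma\delta(1-e^{-\mu t}))+\delta R_0e^{-\mu t}$, combined with the fact that $u$ at $t\to-\infty$ lies below $\P$ shifted, gives $u\le\P(x_1+ct+C\delta)+C\delta\bm1$ for all $t$. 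Letting $\delta\to0$ yields \eqref{l+}.

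\textbf{Main obstacle.} The genuinely hard step is the lower bound with the boundary condition. I would first try the BHM trick. The idea is to compare on $\{x_1\ge -R\}$-type regions where the front is far from $K$. Along $\p\Omega$ I use the already known lower bound $u\ge(1-\delta)\bm1$, valid for $t\ge T_1$ on $B_{R+L}$. This follows from \eqref{comp} and the monotonicity in $t$, and it gives a Dirichlet-type comparison on $\Omega\setminus B_{R+L}$ with boundary values controlled. The subsolution there is close to $\bm1-\delta R_1e^{-\mu(t-T_1)}\le(1-\delta)\bm 1$ after normalizing $R_1$.

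\textbf{Step 2: proof of \eqref{UP}.} By \eqref{1.5} and \eqref{l+}, given $\varepsilon$ there are $T$ such that the difference is below $\varepsilon$ for $|t|\ge T$. On $[-T,T]$ I compare $u$ with $\P(x_1+ct)$ starting at time $-T$, where the two are $\varepsilon$-close. Both satisfy the same system away from $K$, and $\P$ fails the Neumann condition only on $\p K$. Parabolic estimates, together with the finite speed of influence of the boundary defect over a bounded time interval (Gaussian decay of the Neumann heat kernel), show that $|u-\P|\le C\varepsilon+C_Te^{-|x|^2/(CT)}$. Hence the difference is below $2C\varepsilon$ for $|x|$ large, uniformly in $t\in\R$.
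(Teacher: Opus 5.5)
\textbf{The proposal has a genuine gap: Step 1 cannot prove \eqref{l+}.} From $u(T_1,\cdot)\ge\Phi(x_1+cT_0)-\delta\bm1$, your subsolution yields at best $u(t,x)\ge\Phi\bigl(x_1+ct-c(T_1-T_0)-\sigma\delta\bigr)-o(1)$ as $t\to+\infty$. The lag $c(T_1-T_0)$ is not small. It actually blows up as $\delta\to0$, because $T_0\to-\infty$ and $T_1\to+\infty$. The same happens to any comparison with perturbed planar fronts started after the front has met $K$: the initial ordering imposes an $O(1)$ phase shift. The whole content of \eqref{l+} is that the asymptotic phase shift is zero. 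What you actually get is only the analogue of Lemma~\ref{up1}.

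The supersolution side has its own problem. On the face of $K$ that is hit first, $\nu_1<0$, so $\nu\cdot\nabla\Phi(x_1+ct)=\nu_1\Phi'<0$ there. Hence your $\bar u$ is not a supersolution while the front passes. Moreover, with $C$ independent of $\delta$, your bound $u\le\Phi(x_1+ct+C\delta)+C\delta\bm1$ for all $t$ would give $u\le\Phi(x_1+ct)$ at all times. That should not be expected: near that face, reflection pushes $u$ above the front. For a flat wall, the maximum of the front and its mirror image is a subsolution.

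Your boundary patch also fails as stated. For $t-T_1$ large, $\bm1-\delta R_1e^{-\mu(t-T_1)}$ exceeds $(1-\delta)\bm1$. A Dirichlet comparison on $\partial B_{R+L}$ would therefore need a rate for $u\to\bm1$ near $K$, and \eqref{comp} gives none.

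The missing mechanism is the one in Lemmas~\ref{up2}, \ref{ups} and \ref{sbs}.
\begin{itemize}
\item First, at one large time $t_\lambda$, $u(t_\lambda,\cdot)$ is $\lambda q_*$-close to $\Phi(x_1+ct_\lambda)$ outside a large ball. This is a Liouville argument: limits of $u(\cdot,\cdot+x_n)$ with $x_{1,n}=r$ and $|x_n|\to\infty$ are entire solutions in $\R^N$ converging to $\Phi(x_1+ct+r)$ as $t\to-\infty$, hence equal to it by squeezing. So the obstacle leaves only a bounded, localized lag, of the form $\Phi(x_1+ct_\lambda\mp\beta e^{-|x'|^2})$.
\item Second, this localized lag is removed by sub/supersolutions whose phase carries the transversally spreading bump $\mp\beta t^{-\alpha}e^{-|x'|^2/(\gamma t)}$, which tends to $0$. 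This is where $N\ge2$ and \eqref{impo} enter. They also carry an extra shift $V(1)-V(t)$ with $V(1)=O(\varepsilon)$.
\item Third, the boundary is handled by imposing the Neumann inequality only where $\underline u\ge(1-\lambda)\bm1$, and using $u\ge(1-\lambda)\bm1$ on $\partial\Omega$ elsewhere. This works because the amplitude $v(t)$ decays at rate $\omega\le bc/4$, slower than $\Phi'$ decays on $\partial\Omega$.
\end{itemize}

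In your Step 2, Gronwall plus Gaussian decay of the Neumann kernel gives a factor $e^{2\Lambda T}$ in front of $\varepsilon$, not a constant $C$, and $T=T(\varepsilon)$. The bounded time window is better handled by the same compactness/Liouville argument. That argument gives $\sup_{|t|\le T}|u(t,x)-\Phi(x_1+ct)|\to0$ as $|x|\to\infty$ for each fixed $T$.
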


\begin{rem}
 {\rm By virtue of \eqref{1.5}, \eqref{l+} and \eqref{UP}, one concludes that
  $u(t,x)\to\bm0$ as $x_1+c t\to-\i$ and
  $u(t,x)\to\bm1$ as $x_1+c t\to+\i$.
  The entire solution $u(t,x)$ is thus a transition front connecting $\bm0$ and $\bm1$ in the sense of Definition \ref{da},
  with global mean speed $c$, and with
$\Gamma_t=\{x\in\bar\Omega:x_1+ct=0\}$ and
$\Omega_t^\pm=\{x\in\bar\Omega:\pm(x_1+ct)>0\}$ for each $t\in\R$.
In particular, $u$ is an {\it almost-planar front}
\footnote{A transition front $u$, as defined in Definition \ref{da}, is called an almost-planar front if,
for every $t\in\R$, the set $\Gamma_t$ can be chosen  as the hyperplane
$\Gamma_t=\{x\in\Omega:x\cdot e_t=\xi_t\}$
for some $e_t\in\mathbb S^{N-1}$ and some real number $\xi_t$ (see \cite{BH2,H1}).}.}
 \end{rem}

According to \cite{BHM2009}, propagation may be blocked for scalar bistable reaction-diffusion equations in exterior domains.
In the following theorem, we prove two Liouville-type results by sliding methods, and then we
obtain some geometrical conditions of the obstacle to guarantee that
the complete propagation assumption  made in Theorem \ref{t2} is nonempty.
\begin{theo}\label{lcomp}
If the obstacle $K$ is star-shaped or directionally convex with respect to a hyperplane,
then the solution $u_\i$ of \eqref{ui} satisfies $u_\i(x)\equiv\bm1$
for $x\in\bar\Omega $.
Furthermore, the entire solution $u$ obtained
in Theorem \ref{t1} propagates completely in the sense of \eqref{comp}.
\end{theo}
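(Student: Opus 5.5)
The plan follows the Liouville-type argument of Berestycki, Hamel and Matano \cite{BHM2009}, adapted to the cooperative system through the Perron--Frobenius directions in (A3). Take any solution $u_\i$ of \eqref{ui}, for instance the one produced in Theorem \ref{t1}. The goal is to show $u_\i\equiv\bm1$. Once this holds, the local convergence $u(t,\cdot)\to u_\i$ from Theorem \ref{t1} is exactly \eqref{comp}, which gives the last assertion.

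\textbf{Step 1 (behaviour at infinity and a comparison function).} Since $u_\i\to\bm1$ as $|x|\to+\i$, there is $R_*>0$ with $u_\i\geq(1-\delta)\bm1$ for $|x|\geq R_*$, where $\delta$ is small enough that (A3) and the $C^1$ regularity of $F$ give
\[
F(\bm1-sR_1)\leq -\tfrac{\lambda_1}{2}(-sR_1)=\tfrac{\lambda_1}{2}sR_1
\]
componentwise for $0\leq s\leq s_0$. This says that $\bm1$ is linearly stable along $R_1$. I then prove a weak maximum principle: if $\bm 1\geq v\geq(1-\delta)\bm1$ is a supersolution in an unbounded region, and $w\leq\bm1$ is a subsolution with $w\leq v$ on the boundary and $\limsup(w-v)\leq0$ at infinity, then $w\leq v$. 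The proof uses $\varepsilon^*=\inf\{\varepsilon: w\leq v+\varepsilon R_1\}$. If $\varepsilon^*>0$, a contact point exists, and the strong maximum principle for cooperative systems, combined with the sign above, gives a contradiction. Hopf's lemma handles contact points on $\p\Omega$, where the Neumann condition holds.

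\textbf{Step 2 (sliding).} In the directionally convex case, rotate so that $\tilde e=e_N$ and $P=\{x_N=l\}$. The set $\Omega$ then splits into the half-spaces $\{x_N>l\}$ and $\{x_N<l\}$ minus $K$, and the parts of $K$ above and below $P$ are graphs over the projection. In the upper region, compare $u_\i$ with the shifts $u_\i^\tau(x)=u_\i(x+\tau e_N)$ for $\tau\geq0$. For $\tau$ large, $u_\i^\tau\geq(1-\delta)\bm1$ everywhere except near where $u_\i$ lies in a bounded set. Step 1 then gives $u_\i^\tau\geq u_\i$ in the region. Decrease $\tau$ to $\tau^*=\inf\{\tau\geq 0: u_\i^{\tau'}\geq u_\i \ \forall\tau'\geq\tau\}$. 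If $\tau^*>0$, the strong maximum principle, together with the Hopf lemma on $\p K$ (where directional convexity gives $\nu\cdot e_N\leq0$ on the upper boundary piece, so the shifted domain contains the original), yields a strict inequality on compacts. Step 1 gives it near infinity, so $\tau$ can be decreased further, a contradiction. Hence $\tau^*=0$ and $\p_{x_N}u_\i\geq0$ above $P$, and symmetrically $\p_{x_N}u_\i\leq0$ below $P$. Taking limits along lines $x_N\to\pm\i$, the limits satisfy $D\Delta U+F(U)=\bm 0$ in $\R^{N-1}$ and are $\bm1$. Monotonicity then forces $u_\i$ on each vertical line to lie between its value on $P$ and $\bm1$. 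Next, a second sliding in the horizontal directions, or a direct comparison with $\Phi$-type subsolutions $\Phi(x_N-\cdot)$ in the upper half-space and their reflections, shows $u_\i\geq\Phi(\pm(x_N-l)+\cdot)$ for every shift. This forces $u_\i\equiv\bm1$; the argument uses $c>0$, so the planar front invades. In the star-shaped case with center $x_0=0$, slide radially instead: compare $u_\i(x)$ with $u_\i^\lambda(x)=u_\i(\lambda x)$ for $\lambda\geq1$. Star-shapedness ($\nu_K(y)\cdot y\geq0$) ensures that $\lambda\Omega\subset\Omega$ and gives the right sign of the normal derivative. The scaled function satisfies $D\Delta u^\lambda+\lambda^2F(u^\lambda)=\bm 0$, which is a sub- or supersolution depending on the sign of $F$. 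Because that sign is unknown for systems, I would instead use translations in every direction outward from the center. This gives $x\cdot\nabla u_\i\geq0$, so $u_\i$ is radially nondecreasing. Finally, I would compare with a large-ball compactly supported subsolution built from $\Phi$ (expanding since $c>0$) to get $u_\i\equiv\bm1$.

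\textbf{Expected obstacle.} The hardest part is the final step for systems: converting monotonicity and the boundary values at infinity into $u_\i\equiv\bm1$ without a scalar energy or sign argument. I would first try placing a radially expanding subsolution
\[
\underline w(t,x)=\Phi(|x|\ \text{-type}+ct-\xi)-q e^{-\mu t}R_1
\]
in the far region where $u_\i\approx\bm1$, and then use the monotonicity to propagate $u_\i\geq\underline w$ all the way to $\p K$.
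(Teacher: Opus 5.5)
\textbf{Your proposal has a genuine gap.} The self-sliding in Step 2 does not go through, and the final step is never supplied.

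\emph{The self-sliding.} In the directionally convex case you compare $u_\infty$ with $u_\infty^\tau(x)=u_\infty(x+\tau e_N)$ on $\{x_N>l\}\setminus K$.
\begin{itemize}
\item Your Step 1 principle needs $w\le v$ on the whole boundary of that region. That boundary contains the artificial piece $P\setminus K$, where $u_\infty(x)\le u_\infty(x+\tau e_N)$ is unknown; it is essentially what you want to prove.
\item At a contact point $\bar x\in\partial K$, Hopf's lemma gives $\nu\cdot\nabla u_\infty^\tau(\bar x)<\nu\cdot\nabla u_\infty(\bar x)=0$. This is no contradiction: $\bar x+\tau e_N$ is an interior point of $\Omega$, so $u_\infty^\tau$ satisfies no boundary condition on $\partial K$. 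Directional convexity only makes $u_\infty^\tau$ well defined.
\item The same objection applies to the translations in the star-shaped case. Those could at best give $e\cdot\nabla u_\infty\ge0$ in regions invariant under $x\mapsto x+\tau e$, not $x\cdot\nabla u_\infty\ge0$ up to $\partial K$.
\end{itemize}

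\emph{The final step.} Even granting monotonicity, nothing in the proposal turns it into $u_\infty\equiv\bm1$. The expanding bubble you suggest, of the type in Lemma \ref{key}, stays a subsolution only while it vanishes near $K$. Once its support meets $\partial K$ you need $\nu\cdot\nabla\underline w\le0$ there, and this fails on the part of $\partial K$ facing away from the centre of the bubble. For general obstacles $u_\infty<\bm1$ can occur (annular obstacles with a narrow channel). So the shape of $K$ must enter through exactly such a boundary sign for the comparison function, and your argument never provides one.

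\textbf{The missing idea} is to slide an explicit one-dimensional profile rather than $u_\infty$ against itself.
\begin{itemize}
\item Lemma \ref{Uxi} constructs a half-line stationary solution $U$ with $DU''+F(U)=\bm0$ on $(0,+\infty)$, $U(0)=\bm0$, $U(+\infty)=\bm1$, $U'\gg\bm0$. This is where $c>0$ is used: $D\Phi''+F(\Phi)=c\Phi'\gg\bm0$, so a cut-off, $Q$-corrected copy of $\Phi$ is a stationary subsolution vanishing at $0$.
\item The paper then slides $U(|x|-r)$ (resp. $U(|x\cdot\tilde e-l|-r)$) from $r=r_0$ down to $-\infty$. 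This family equals $\bm0$ on the artificial boundary $\{|x|=r\}$, where $u_\infty\gg\bm0$ dominates automatically.
\item The family is a subsolution, strict in the radial case because of the term $\frac{N-1}{|x|}U'$.
\item On $\partial\Omega$ its normal derivative is $U'\,\nu\cdot x/|x|\le0$ by star-shapedness, resp. $U'\,\mathrm{sgn}(x\cdot\tilde e-l)\,\nu\cdot\tilde e\le0$ by directional convexity. Hence a boundary contact contradicts Hopf's lemma.
\item Your Step 1 is exactly the device that starts the sliding. The paper compares $u_\infty+\varepsilon\delta Q_1$ with $U(|x|-r_0)$ on $\{|x|\ge r_0\}$ to obtain \eqref{claU}.
\item Interior contacts are excluded by the strong maximum principle for cooperative systems. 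Contacts at the centre or on $P$ are excluded by a gradient argument. This yields $u_\infty\ge U(+\infty)=\bm1$.
\end{itemize}

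Your remark about comparing with $\Phi(\pm(x_N-l)+\cdot)$ points the right way. It works only if used as a global sliding family $\Phi(|x\cdot\tilde e-l|-r)$ or $\Phi(|x|-r)$ on all of $\bar\Omega$, together with $\inf_{\bar\Omega\cap B(\bm\theta,r_0)}u_\infty\gg\bm0$. Used inside the half-space, $P\setminus K$ reappears as an uncontrolled boundary.

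A minor slip in Step 1: (A3) gives $F(\bm1-sR_1)\ge\frac{\lambda_1}{2}sR_1$ for small $s>0$, not $\le$. What Step 1 actually uses is $F'(u)R_1\le-\frac{\lambda_1}{2}R_1$ for $u$ near $\bm1$.
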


The paper is organized as follows. In section \ref{s2}, we present  auxiliary notations and lemmas.
Section \ref{s3} is devoted to the existence, uniqueness, monotonicity and the large time behavior of
the entire solutions emanating from a planar traveling front,  in other words, we prove Theorem \ref{t1}.
In Section \ref{s4}, we study the large time behavior of the entire solution given in Theorem \ref{t1} under complete
propagation assumption, namely,
we prove Theorem \ref{t2}.
In Section \ref{s5}, we focus on two Liouville-type results by providing star-shaped or directionally convex obstacle,
that is, we prove Theorem \ref{lcomp}.
Finally, we apply our results to a Lotka-Volterra competition-diffusion system in Section \ref{s6}.

\section{Preliminaries}\label{s2}
In this section, we introduce some preliminaries.
Let us first state the definitions of sub- and supersolutions.

\begin{defi}\label{dss}
If a vector-valued function $u=(u_1,\cdots,u_m)$ satisfies $u_i\in C^{0,1}(\R\times\bar\Omega)\cap C^{1,2}(\R\times\Omega)$ for each $i=1,\cdots,m$
and such that
\begin{align*}
\bc
u_t-D\Delta u-F(u)\leq\bm0\ ({resp.} \geq\bm0)&\ \text{ in }\ \R\times\Omega,\\
u_\nu\leq\bm0\ (resp.\geq\bm0)&\ \text{ on }\ \R\times\p\Omega,
\ec
\end{align*}
then $u$ is called a subsolution $($resp. supersolution$)$ of \eqref{1.1} in $\R\times\bar\Omega$.
Furthermore, if both $ u$ and $v$ are subsolutions $($resp. supersolutions$)$ of \eqref{1.1}, then
$\max(u,v)$ $(resp. \min(u,v))$
 are also called a subsolution $($resp. supersolution$)$ of \eqref{1.1},
 where max $($resp. min$)$ is to be understood componentwise.
\end{defi}

The following comparison principle is directly derived from \cite{FT,PM}.
\begin{lem}\label{com}
If the functions $\underline u,\bar u\in[\bm0,\bm1]$ are sub- and supersolutions of \eqref{1.1} respectively,
and it holds  $ \underline u(0,\cdot)\leq\bar u(0,\cdot)$ in $\Omega$, then
$\underline u\leq\bar u$ in $[0,+\i)\times\bar\Omega$.
\end{lem}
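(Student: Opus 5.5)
The statement is Lemma \ref{com}, a comparison principle for the cooperative Neumann system, and I will prove it by the standard scalarization-plus-maximum-principle argument. Set $w=\bar u-\underline u$ on $[0,+\i)\times\bar\Omega$. Fix $T>0$; it suffices to show $w\geq\bm0$ on $[0,T]\times\bar\Omega$.

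\textbf{Step 1: linearize.} Subtracting the inequalities and using the mean value theorem componentwise, I write $F_i(\bar u)-F_i(\underline u)=\sum_j c_{ij}(t,x)w_j$ with
\[
c_{ij}=\int_0^1\frac{\p F_i}{\p u_j}\big(\underline u+s(\bar u-\underline u)\big)\,ds .
\]
These coefficients are bounded because $\underline u,\bar u\in[\bm0,\bm1]$ and $F\in C^1([\bm0,\bm1])$. By (A4), $c_{ij}\geq0$ for $i\neq j$, since the segment between $\underline u$ and $\bar u$ lies in the convex set $[\bm0,\bm1]$. Then
\[
\p_t w_i-D_i\Delta w_i-\sum_j c_{ij}w_j\geq0 \text{ in } (0,T]\times\Omega,\qquad \p_\nu w_i\geq0 \text{ on } \p\Omega,\qquad w(0,\cdot)\geq\bm0 .
\]
If $\underline u$ or $\bar u$ is a max or min of finitely many sub- or supersolutions, as in Definition \ref{dss}, I treat it locally: at a negative minimum of $w_i$, replace $\bar u_i$ by the active supersolution piece, which only decreases $w_i$ there. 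This keeps the argument pointwise and is standard.

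\textbf{Step 2: exponential weight to handle the unbounded domain.} Since $\Omega$ is unbounded, I multiply by a weight that grows at infinity. Let $M$ bound $\sum_j|c_{ij}|$. Take $\phi(x)=\sqrt{1+|x|^2}$, so $|\nabla\phi|,|\Delta\phi|\le C$. Choose the boundary term so the weight is compatible with Neumann data. The simplest choice is to perturb: $z_i=w_i+\vp e^{\kappa t}\psi(x)$, with $\psi\geq1$, $\psi\to\i$ at infinity, and $\p_\nu\psi>0$ on $\p K$. Such a $\psi$ exists since $K$ is compact with smooth boundary: combine $\phi$ with a function equal to minus the distance to $K$ near $\p K$, cut off far away. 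With $\kappa$ large depending on $M$, $D_i$, $\|\Delta\psi/\psi\|_\i$, each $z_i$ is a strict supersolution of the cooperative linear system, and $z_i\to+\i$ as $|x|\to\i$ uniformly on $[0,T]$ because $w$ is bounded.

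\textbf{Step 3: first-touching argument.} Suppose some $z_i$ becomes negative. Since $z>0$ at $t=0$ and for large $|x|$, there is a first time $t_0>0$ and a point $x_0$ with $\min_j z_j(t_0,\cdot)$ attaining $0$ at $x_0$ for some index $i$, while $z\ge\bm0$ on $[0,t_0]$. If $x_0\in\p\Omega$, then $\p_\nu z_i(t_0,x_0)\le0$ at a boundary minimum, contradicting $\p_\nu z_i\ge \vp e^{\kappa t}\p_\nu\psi>0$. If $x_0\in\Omega$, then $\p_t z_i\le0$ and $\Delta z_i\ge0$ there. Using $c_{ij}z_j\ge0$ for $j\neq i$ and $z_i=0$, this contradicts the strict inequality $\p_t z_i-D_i\Delta z_i-\sum_j c_{ij}z_j>0$. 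Hence $z\ge\bm0$, and letting $\vp\to0$ gives $\underline u\le\bar u$.

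The only delicate point is the construction of $\psi$ with positive outward normal derivative on $\p K$ while keeping $\Delta\psi/\psi$ bounded. Regularity is a minor issue, since the functions are only $C^{0,1}$ up to the boundary. I would handle it by noting the Neumann condition is imposed classically, or by invoking \cite{FT,PM}, as the paper does.
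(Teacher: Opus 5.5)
Your proof is correct. It differs from the paper mainly in that the paper gives no argument: Lemma~\ref{com} is simply quoted from \cite{FT,PM}, i.e.\ from the general comparison theory for quasimonotone parabolic systems. That theory also covers irregular comparison functions such as componentwise max/min.

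Your self-contained route shows exactly where each hypothesis is used:
(A4) on the convex set $[\bm0,\bm1]$ gives $c_{ij}\ge0$ for $i\neq j$; the hypothesis $\underline u,\bar u\in[\bm0,\bm1]$ gives bounded coefficients and a bounded $w$; and compactness and smoothness of $K$ give the barrier $\psi$. For instance, $\psi=\sqrt{1+|x|^2}+A\widetilde\zeta+B$ works, with the paper's $\widetilde\zeta$ and $A>1$. Since $\partial_\nu\psi>0$, the boundary case of your first-touching argument is excluded by the sign of the normal derivative alone, without the Hopf lemma. The growth of $\psi$ keeps the touching point in a compact set, which is all the exterior domain requires.

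What the citation buys is a rigorous treatment of two things you only sketch: the low regularity ($C^{0,1}$ up to $\partial\Omega$) and the nonsmooth max/min functions. On the latter, your remark is slightly off. The active piece $\bar u^k_i$ lies \emph{above} $\bar u_i$ and touches it at the point, so it preserves the zero minimum of $z_i$ rather than decreasing $w_i$. You also need cooperativity once more to get $F_i(\bar u^k)\ge F_i(\bar u)$ at that point, since $\bar u^k_i=\bar u_i$ and $\bar u^k_j\ge\bar u_j$ for $j\neq i$. This requires the segment between them to stay where (A4) applies.

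For the comparison functions actually used in the paper, this issue disappears. They are truncations by $\bm0$ and $\bm1$, with the differential inequalities checked where $\underline u_i>0$, resp.\ $\bar u_i<1$. At a touching point, $\underline u_i-\bar u_i=\vp e^{\kappa t_0}\psi(x_0)>0$ forces $\underline u_i>0$ and $\bar u_i<1$. Hence both functions coincide with their smooth pieces near that point, and your classical argument applies verbatim.
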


Next, we construct two vector-valued functions $P$ and $Q$ composed of two Perron-Frobenius directions $ R_0$ and $ R_1$, such that the sign of each component of $F$ is clear in the neighborhoods of the equilibria $\bm0$ and $\bm{1} $ along the directions $ P $ and $Q $.
Choose two positive constants
$\eta_0$ and $\eta_1$ such that $\eta_0 R_0\ll R_1$ and $\eta_1R_1\ll R_0$.
Let $(p_1^0,\cdots,p_m^0)= P_0:=\eta_0R_0$ and $(p_1^1,\cdots,p_m^1)= P_1:=R_1$,
$(q_1^0,\cdots,q_m^0)=Q_0:=R_0$ and $(q_1^1,\cdots,q_m^1)= Q_1:=\eta_1R_1$.
Making use of conditions {(A2)} and {(A3)}, there are irreducible constant matrices
$A_0=(\mu_{ij}^0)_{m\times m}$ and $ A_1=(\mu_{ij}^1)_{m\times m}$ such that
$\frac{\partial F_i}{\partial u_j}(\bm0)<\mu_{ij}^0$ and
$\frac{\partial F_i}{\partial u_j}(\bm1)<\mu_{ij}^1$
for each $i,j=1,\cdots,m$, $A_0 P_0\ll-\frac{1}{2}\lambda_0P_0$,
$ A_0 Q_0\ll-\frac{1}{2}\lambda_0 Q_0$, $ A_1 P_1\ll-\frac{1}{2}\lambda_1 P_1$,
$ A_1Q_1\ll-\frac{1}{2}\lambda_1Q_1$,
and the principle eigenvalues of $ A_0$ and $ A_1$ are negative.

Choose a $C^\i$ function $\chi:\R\to[0,1]$ satisfying
\begin{equation*}
 \chi\equiv0 \text{ in } (-\i,0],\ \chi\equiv1 \text{ in } [1,+\i),  0<\chi<1  \text{ and }\chi'>0\text{ in } (0,1).
\end{equation*}
For all $s\in\R$, define vector-valued functions $P(s)=(p_1(s),\cdots,p_m(s)) $ and $Q(s)=(q_1(s),\cdots,q_m(s))$ by
\begin{equation}{\label{2C.a}}
p_i(s)=\chi(s)p_i^1+(1-\chi(s))p_i^0\ \text{ and }\ q_i(s)=\chi(s)q_i^1+(1-\chi(s))q_i^0
\end{equation}
 for each $i=1,\cdots,m$.
It is easy to see that
\begin{align}{\label{2C.2}}
&p_i(s)\in[p_i^0,p_i^1],\ q_i(s) \in[q_i^1,q_i^0],\ p_i'(s)\geq0,\ q_i'(s)\leq0
\ \text{ for }s\in \mathbb R,\notag\\
&\min_{ i=1,\cdots, m}\left(\inf\limits_{s\in\mathbb R}p_i(s)\right)=\min\limits_{ i=1,\cdots, m} p_i^0=:p_*>0,\
\min\limits_{ i=1,\cdots, m}\left(\inf\limits_{s\in\mathbb R}q_i(s)\right)=\min\limits_{ i=1,\cdots, m} q_i^1=:q_*>0,\notag\\
&\max_{ i=1,\cdots, m}\left(\sup\limits_{s\in\mathbb R}p_i(s)\right)=\max\limits_{ i=1,\cdots, m} p_i^1=:p^*>0,\
\max\limits_{ i=1,\cdots, m}\left(\sup\limits_{s\in\mathbb R}q_i(s)\right)=\max\limits_{ i=1,\cdots, m} q_i^0=:q^*>0,\notag\\
&\max_{ i=1,\cdots, m}\left(\sup_{s\in\R}\e(\left| p_i'(s)\right|+\left| p_i''(s)\right|+\left| q_i'(s)\right|+\left| q_i''(s)\right|\r)\right)\leq M\
\text{ for some positive constant } M.
\end{align}

For  any point $v\in\mathbb R^m$ and any constant $r>0$,
let $B_m(v,r)$ be the open Euclidean ball with center $v$ and radius $r$ in $\R^m$.
By the definition of $\mu_{ij}^0$ and $\mu_{ij}^1$,
there exist constants $0<\varepsilon_0<\min\left({p_*}/{4},{q_*}/{4}\right)$ and
 $\varpi>0$ such that
\begin{align}{\label{2C.3}}
&\frac{\partial F_i}{\partial u_j}( u)\leq\mu_{ij}^0\ \text{ for }\
 u\in B_m(\bm0,{4\varepsilon_0})\cap[\bm0,\bm1]\ \text{ and }\ i,j=1,\cdots,m,\notag\\
&\frac{\partial F_i}{\partial u_j}(u)\leq\mu_{ij}^1\ \text{ for }\
 u\in B_m(\bm1,{4\varepsilon_0})\cap[\bm0,\bm1]\ \text{ and }\ i,j=1,\cdots,m,\notag\\
&\sum\limits_{j=1}^{m}\mu_{ij}^0 w_j\leq-\varpi w_i\ \text{ for }\  w=(w_1,\cdots,w_m)
\in\left(\mathbb R^m_+\cap B_m( P_0,{2\varepsilon_0})\right)\cup\left(\mathbb R^m_+\cap
B_m(Q_0,{2\varepsilon_0})\right),\notag\\
&\sum\limits_{j=1}^{m}\mu_{ij}^1 w_j\leq-\varpi w_i\ \text{ for }\ w=(w_1,\cdots,w_m)
\in\left(\mathbb R^m_+\cap B_m( P_1,{2\varepsilon_0})\right)\cup\left(\mathbb R^m_+\cap
 B_m(Q_1,{2\varepsilon_0})\right).
\end{align}

Since $F\in C^1([\bm0,\bm1],\R^m)$, there exists a constant  $\Lambda>0$ such that
\begin{equation}{\label{2C.7}}
\Lambda=\max\limits_{i=1,\cdots, m}\e(\sup\left\{\sum\limits_{j=1}^{m}\left|\frac{\partial F_i}{\partial u_j}(u)\right|:u\in[\bm0,\bm1]\right\}\r).
\end{equation}
Denote
\begin{equation}\label{2C.b}
\overline D=\max\limits_{i=1,\cdots, m}D_i>0,\ \underline D=\min\limits_{i=1,\cdots, m}D_i>0
 \text{ and }\eta=\min\left(\frac{b c}{2},\frac{\varpi}{2}\right),
\end{equation}
where $b>0$ is given by \eqref{1.a}.
For any vector $v\in\R^N$ and any real number $r>0$, let $B(v,r)$ be the open Euclidean ball with center $v$ and radius $r$ in $\R^N$.
Without loss of generality, we assume for the remainder of this paper that the origin $\bm\theta=(0,\cdots,0)$ in $\R^N$ is included in the obstacle $K$.
Since  $ K$ is compact, there exists  $L>0$ large enough such that
\begin{align}\label{ck}
B(\bm\theta,L)\subset K.
\end{align}

We now turn to introduce an auxiliary function that plays a crucial role in the construction of sub- and supersolutions.
Let $\widetilde{\zeta}:\overline\Omega\to\mathbb R$ be a $C^2$ function with compact support in $\overline\Omega$ satisfying $\nu
\cdot\nabla\widetilde{\zeta}=1$ on $\partial \Omega$.  Then, the continuous functions $|\nabla\zeta|$ and $\Delta\widetilde{\zeta}$ are bounded and compact supported. In fact, we can construct a truncated function as a such $
\widetilde{\zeta}$ by applying the classical distance function around the boundary $\partial \Omega$ as in \cite{GT}.  Choose a constant $\widehat C>0$ such that
 \begin{equation}\label{zeta}
\zeta(x)=\widetilde\zeta(x)+\widehat C>1\ \text{ for }\ x\in\overline\Omega
 \end{equation}
and
\begin{equation}\label{2l.a}
  \left\|\frac{\Delta\zeta}{\zeta}\right\|_{L^{\infty}(\overline\Omega)}\leq\frac{\eta}{\overline D}\left(\leq\frac{\varpi}{2\bar D}\right).
\end{equation}
Moreover, it is evident that
$\nu\cdot\nabla\zeta=1$ on $\p\Omega$.
In the case when $\Omega=\R^N$, we define $\zeta=1$ in $\R^N$.

Finally, we establish a key lemma analogous to  \cite[Lemma 5.2]{BHM2009}.
Fix a point $x_0\in\R^N$ and a constant $R>0$ such that $B(x_0, R)\subset\Omega$.
For any $\delta>0$, let  $ v_{x_0,R}^\dl=\e(v_{x_0,R}^{\dl,1},\cdots,v_{x_0,R}^{\dl,m}\r)$ denote the solution of the following
 Cauchy problem
\begin{align}\label{vr}
\bc
\e( v_{x_0,R}^\dl\r)_t(t,x)-D\Delta v_{x_0,R}^\dl(t,x)= F\e(v_{x_0,R}^\dl(t,x)\r),&t>0,\ x\in\Omega,\\
\nu\cdot \nabla v_{x_0,R}^\dl(t,x)=\bm0,&t>0,\ x\in\p\Omega
\ec
\end{align}
with initial value condition
\begin{align}\label{vr0}
v_{x_0,R}^\dl(0,x)=
\bc
(1-\delta q_*)\cdot\bm1,&x\in\bar{B(x_0,R)},\\
\bm0,&x\in\bar{\Omega\backslash B(x_0,R)}.
\ec
\end{align}
It is worth noting that in the proof of \cite[Lemma 5.2]{BHM2009}, the nonlinearity was exactly extended. But this is difficult to achieve for the
vector-valued function \( F \). Consequently, we cannot directly apply \cite[Lemma 5.2]{BHM2009} to the initial-boundary value problem \eqref{vr} and \eqref{vr0}. In fact, by constructing a suitable subsolution close to some radially symmetric expanding fronts propagating with speed less than $c$, we derive the following lemma.

\begin{lem}\label{key}
For any small $\delta>0$, there exist four positive constants  $R_1=R_1(\dl)$,  $R_2=R_2(\dl)$, $R_3=R_3(\dl)$ and $\tilde T=\tilde T(\dl)$ satisfying $R_1<R_2<R_3$ such that, if $B(x_0,R_3)\subset\Omega$, then
$$
v_{x_0,R_1}^\dl(\tilde T,\cdot)\geq(1-\delta q_*)\cdot \bm 1\ \text{ in }\bar{B(x_0,R_2)}(\subset\Omega).
$$
\end{lem}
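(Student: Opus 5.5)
We will prove the lemma by building a radially symmetric subsolution from the planar profile $\P$ and then invoking the comparison principle, Lemma \ref{com}. The idea is to treat $\P(r-R(t))$ as an expanding front, with $r=|x-x_0|$, and to correct it with a Perron--Frobenius term. Two features make this work: the curvature term $\frac{N-1}{r}\P'$ is harmless once the radius is large, and the resulting subsolution never touches $\p\Omega$, because it stays inside $B(x_0,R_3)\subset\Omega$.

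Concretely, fix a slower speed $c'\in(0,c)$, say $c'=c/2$. For $t\in[0,\tilde T]$ we look for a subsolution of the form
\begin{equation*}
\underline v(t,x)=\max\Big(\P\big(\rho(t)-|x-x_0|\big)-\dl e^{-\eta t}\,Q\big(\rho(t)-|x-x_0|\big),\ \bm0\Big),
\end{equation*}
with $\rho(t)=\rho_0+c't$, where $\P(\cdot)$ is read with its argument increasing inward, so that $\P\approx\bm1$ near $x_0$. An alternative is the ansatz $\P(\rho(t)-|x-x_0|)-\dl Q(\cdot)$ with the constant correction $\dl Q$ in place of the decaying one; the choice between the two is made by how the signs of $F$ near $\bm0,\bm1$ are controlled through (2C.3). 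The verification splits into three regions according to the value of $\xi=\rho(t)-|x-x_0|$.

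\textbf{Front region}, $\xi\in[-C_0,C_0]$. Here $\P'\gg\bm0$ is bounded below, so the error $(c-c')\P'$ dominates the curvature term $\frac{N-1}{r}D\P'$ as soon as $r\geq\rho_0/2$ is large. It also dominates the $O(\dl)$ terms coming from $Q$, provided $\dl$ is small. The estimate \eqref{impo} on $\P''$ is what keeps all of these comparable to $\P'$.

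\textbf{Near $\bm1$ and near $\bm0$}, $|\xi|\geq C_0$. We use the Perron--Frobenius bound $\sum_j\mu_{ij}q_j\leq-\varpi q_i$ from (2C.3) to absorb the nonlinear error, exactly as in the standard Fife--McLeod construction.

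\textbf{Near the centre}, $r$ small. To avoid the singularity of the curvature term we cap the function: we take it constant, $\approx(1-\dl q_*)\bm1$, on $B(x_0,1)$, or we replace $|x-x_0|$ by a smooth radial function.

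To start the comparison we need $\underline v(0,\cdot)\leq v^\dl_{x_0,R_1}(0,\cdot)$. Since the initial datum is only $(1-\dl q_*)\bm1$ on $B(x_0,R_1)$, we take the corrected ansatz so that $\underline v(0,\cdot)\leq(1-\dl q_*)\bm1$ everywhere, using $q_i\geq q_*$. On the other side, since $\P(\xi)-\dl Q(\xi)\leq\bm0$ for $\xi\leq-C_1$, choosing $R_1=\rho_0+C_1$ gives $\underline v(0,\cdot)=\bm0$ outside $B(x_0,R_1)$. The front then travels a distance $c'\tilde T$. We choose $\tilde T$ and $R_2=\rho_0+c'\tilde T-C_2$ so that on $B(x_0,R_2)$ we have $\xi\geq C_2$ and hence $\underline v\geq(1-\dl q_*)\bm1$; this holds because $\P\to\bm1$ exponentially and the correction decays. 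Finally set $R_3=\rho_0+c'\tilde T+C_1$. Then $\underline v$ vanishes near $\p B(x_0,R_3)$, hence near $\p\Omega$, so the Neumann condition is trivially satisfied and Lemma \ref{com} applies on $[0,\tilde T]\times\bar\Omega$.

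The main obstacle is the inequality $\underline v\geq(1-\dl q_*)\bm1$ at time $\tilde T$. It requires the correction to have become strictly smaller than $\dl q_*$ while still being large enough at $t=0$ to sit below the datum. With the exponentially decaying correction $\dl e^{-\eta t}Q$ this is exactly what happens: at $t=0$ the correction is at least $\dl q_*$, and at time $\tilde T$ it is smaller. If that fails, the fallback is to run a scalar-type comparison with a slightly smaller level $\dl/2$ and to use the interior stability of $\bm1$ along $R_1$ to push the solution up.
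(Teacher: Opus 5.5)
Your architecture matches the paper's: a radially expanding copy of $\P$ moving at a speed $c'<c$, minus a decaying Perron--Frobenius correction, compared with $v^\dl_{x_0,R_1}$ on $[0,\tilde T]$ while vanishing near $\p\Omega$. The step you do not actually carry out is the centre, and the first option you offer there fails. Making the profile constant on a ball glues a radially decreasing function to a constant, which is at best a \emph{minimum}. Across the sphere the radial derivative jumps from $0$ to $-\P'<0$. This concave kink gives $-D_i\Delta\underline v^i$ a positive singular part, so no subsolution inequality can hold there. (For a system, a constant $(1-s)\bm1$ need not even be a subsolution, since \eqref{2C.3} controls $F$ only along $Q_1$.)

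The second option, replacing $|x-x_0|$ by $h(|x-x_0|)$, is what the paper does with $h_\mu$, but this is where the real work is. Since $\Delta[\P_i(\rho(t)-h)]=h'^2\P_i''-\P_i'\,\Delta h$, two new terms remain after you use the profile equation: $D_i\P_i'\,\Delta h$ and $D_i\P_i''\,(1-h'^2)$.
\begin{itemize}
\item The first forces you to build $h$ with $0\le h'\le1$, $h'=0$ near $0$, $h(r)=r$ for $r\ge H$, and $h''+\frac{N-1}{r}h'$ bounded everywhere by a small multiple of $(c-c')/\bar D$. This is \eqref{hv}.
\item The second has no sign. The paper handles it by noting that $h'<1$ only where $\xi\ge\rho_0-H$ is large, where $\P''\le\bm0$ by \eqref{impo1}. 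This is where that hypothesis enters the proof. Alternatively, you could absorb $\bar D|\P_i''(\xi)|\le\bar Da e^{-b\xi}$ into $-(\varpi-\eta)\dl e^{-\eta t}q_*$, using $\eta\le bc'$ and $\rho_0$ large depending on $\dl$.
\end{itemize}
By contrast, \eqref{impo} plays no role in your front region, where $\P''$ cancels through the profile equation.

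A second, smaller error is the choice of $R_3$. With the decaying correction, $\underline v(t,x)=\bm0$ requires $\P(\xi)\le\dl e^{-\eta t}Q_0$, not $\P(\xi)\le\dl q_*\cdot\bm1$. So the support of $\underline v(t,\cdot)$ extends beyond $\{\xi\ge-C_1\}$ as $t$ grows. With $R_3=\rho_0+c'\tilde T+C_1$ the function need not vanish near $\p\Omega$, and the Neumann condition is not automatic. Instead, take $C_3$ with $\P\le\dl e^{-\eta\tilde T}q_*\cdot\bm1$ on $(-\infty,-C_3]$ and set $R_3\ge\rho_0+c'\tilde T+C_3+1$.

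With these repairs your proof is a legitimate, slightly simpler variant of the paper's. You keep the transition window $[-C_0,C_0]$ independent of $\dl$; outside it you only need $\P$ within $\vp_0$ of $\bm0$ or $\bm1$. Then the speed deficit $(c-c')\min_{[-C_0,C_0]}\P'$ alone absorbs all $O(\dl)$ errors once $\dl$ is small. So you need neither the shift $we^{-\dl t}$ in $\xi$ nor the factor $\zeta$. The paper needs the shift because its window $[-C_1,C_1]$ depends on $\dl$. In exchange, its only smallness condition on $\dl$ is \eqref{bvd}, which is immaterial since the lemma is applied with $\dl$ arbitrarily small.

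Your treatment of the final time is correct and more careful than the paper's Step 3. That step asserts $\underline v(\tilde T,\cdot)\ge\P(\xi(\tilde T,\cdot))$, although the correction is subtracted. What is really needed is your condition that the decayed correction be at most $\frac12\dl q_*$ where $\xi(\tilde T,\cdot)\ge C_2$. You obtain it by taking $\tilde T$ large and $R_2\le\rho_0+c'\tilde T-C_2-h(0)$.
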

\begin{pr}
The proof is divided into three steps.

{\it Step 1: some notations and parameters}.
Choose any constant $\dl$ satisfying
\begin{align}\label{bvd}
 0<\dl<\min\left( \frac{2\vp_0}{q^*\|\zeta\|_{L^\i(\bar\Omega)}},\frac{\varpi}{2},1\right)
\left(\leq\frac{2\vp_0}{q^*}\right).
\end{align}
Denote
$\mu=\frac{c}{4\bar D}$.
Define a $C^2$ function $h_\mu:[0,\i)\to\R$ satisfying
\begin{align}\label{hv}
\bc
0\leq h_\mu'\leq1&\text{ in }[0,+\i),\\
h_\mu'=0 &\text{ in a neighborhood of 0},\\
h_\mu(r)=r &\text{ in $[H_\mu,+\i)$ for some $H_\mu>0$},\\
\frac{N-1}{r}h_\mu'(r)+h_\mu''(r)\leq\frac{\mu}{2}&\text { in }[0,+\i).
\ec
\end{align}
Notice in particular that
\begin{align}\label{hr}
r\leq h_\mu(r)\leq r+h_\mu(0)\ \text{ for all }r\geq0.
\end{align}

By \eqref{impo1}, $\Phi(-\i)=\bm0$ and $\Phi(+\i)=\bm1$, there exists a constant $C_1>1$ such that
\begin{align}\label{c}
 \P\leq\delta q_*\cdot\bm1 \text{ in }(-\i,-C_1],\
 \P\geq(1-\delta q_*)\cdot\bm1
\text{ and }\P''\leq\bm0\ \text{ in }[C_1,+\i).
\end{align}
Furthermore, there exists $\kappa>0$ such that
\begin{align}\label{kp}
\kappa=\min_{i=1,\cdots,m}\left(\inf_{\xi\in{[-C_1,C_1]}}\P_i'(\xi)\right).
\end{align}
Similarly, there exists $C_2\g C_1$ such that
\begin{align}\label{hc}
\P\leq\frac{1}{2}\delta q_*\cdot\bm1 \text{ in }(-\i,- C_2]
\ \text{ and }\ \P\geq\e(1-\frac{1}{2}\delta q_*\r)\cdot\bm1 \text{ in }[C_2,+\i).
\end{align}

Let $w>0$ large enough such that
\begin{align}\label{w}
w\geq\kappa^{-1}&\e(\e(\Lambda+ 1\r) q^*\|\zeta\|_{L^\i(\bar\Omega)}+\e(c+\bar D\r)M\|\zeta\|_{L^\i(\bar\Omega)}
+2\bar D M\|\nabla\zeta\|_{L^\i(\bar\Omega)}
+\bar Dq^*\|\Delta\zeta\|_{L^\i(\bar\Omega)}\r).
\end{align}
Pick a positive constant $\tilde T$ such that
\begin{align*}
\tilde T\geq\frac{4}{3 c}
(C_1+C_2+h_\mu(0)+w+1).
\end{align*}
Note that $\tilde T=\tilde T(\dl)$ since $C_1$, $C_2$, $\kappa$ and $w$ depend on the choice of $\dl$.
Since
$$
\frac{3}{4}c \tilde T+H_\mu-h_\mu(0)
\geq 1+w+H_\mu+C_1+C_2 > w+H_\mu+C_1+C_2,
$$
 we can
choose $R_1=R_1(\dl)$,  $R_2=R_2(\dl)$ and  $R_3=R_3(\dl)$ such that
\begin{align}\label{3r}
\begin{cases}
w+H_\mu+C_1+ C_2\leq R_1<R_2\l\frac{3}{4}c \tilde T+H_\mu-h_\mu(0),\\
R_3\geq\max(R_1+L,R_2, w+H_\mu+C_1+ C_2+L+c \tilde T).
\end{cases}\end{align}
It is evident that  $0<R_1<R_2<R_3$.
Take any point $x_0\in \R^N$ such that
$B(x_0,R_3)\subset\Omega$. It then follows from \eqref{ck} that
$$
\bar{B(x_0,R_1)}\subset\bar{B(x_0,R_2)}\subset B(x_0,R_3)\subset\Omega\ \text{ and }\ |x_0|\geq R_3.
$$

{\it Step 2: construction of a subsolution}.
For all $t\in[0,\tilde T]$ and $x\in\bar\Omega$, we define the function $\underline { v}=(\underline v^1,\cdots,\underline v^m)$ as
$$
\underline {v}(t,x)=\max\left(\Phi(\xi(t,x))-\delta Q(\xi(t,x))\zeta(x)e^{-\dl t},\bm 0\right),
$$
where
$$
\xi(t,x)=-h_\mu(|x-x_0|)+\frac{3}{4}c t+ w e^{-\dl t}+H_\mu+C_1.
$$
We shall prove that  the function $\underline { v}(t,x)$ is a subsolution of the problem satisfied by $v_{x_0,R_1}^\dl$ in $[0,\tilde T]\times\bar\Omega$.
Let us first verify the initial and boundary conditions. If $x\in \bar{B(x_0,R_1)}$, one  then infers from $\Phi\leq\bm1$, \eqref{2C.2}
and \eqref{zeta} that $\underline { v}(0,x)\leq(1-\delta q_*)\cdot\bm1= v_{x_0,R_1}^\dl(0,x)$.
If $x\in\bar{\Omega\backslash{B(x_0,R_1)}}$, it then follows from \eqref{hr} and \eqref{3r} that
$$
\xi(0,x)\leq-|x-x_0|+ w+H_\mu+C_1\leq-R_1+ w+H_\mu+C_1\leq-C_2.
$$
Hence,
$\underline{ v}(0,x)\leq\max\left(({\delta q_*}/{2}-\delta q_*)\cdot\bm1,\bm0\right)=\bm0= v_{x_0,R_1}^\dl(0,x)$
for $x\in\bar{\Omega\backslash{B(x_0,R_1)}}$, by \eqref{2C.a}, \eqref{zeta}, \eqref{hc}.
 As a result,  $\underline{ v}(0,x)\leq v_{x_0,R_1}^\dl(0,x)$ for $x\in\bar\Omega$.
Moreover, for all $0\leq t\leq\tilde T$ and  $x\in\bar\Omega$ such that $|x|\leq L$, it follows from
  \eqref{hr} and \eqref{3r} that
$$
\xi(t,x)\leq-|x-x_0|+\frac{3}{4}c\tilde T+H_\mu+ w+C_1
\leq-R_3+L+\frac{3}{4}c\tilde T+H_\mu+ w+C_1\leq-C_2,
$$
then $\underline{v}(t,x)\leq\max(({\delta q_*}/{2}-\delta q_*)\cdot\bm1,\bm0)=\bm0$ because of
\eqref{2C.2}, \eqref{zeta} and \eqref{hc}.
According to \eqref{ck},
one has $\nu\cdot\nabla\underline v(t,x)=0$ for all $0\leq t\leq\tilde T$ and $x\in\p\Omega$.

Let $i=1,\cdots,m$ be any fixed index. It suffices to prove that
$$
\mathscr L_i[\underline{v}](t,x)=\underline v^i_t(t,x)-D_i\Delta\underline v^i(t,x)-F_i(\underline { v}(t,x))\leq0
$$
for $0<t\leq\tilde T$ and $x\in\Omega$ such that $\underline v^i(t,x)>0$.
By direct calculations, one has
\begin{align*}
\mathscr L_i[\underline{ v}](t,x)
=&F_i(\Phi(\xi(t,x)))-F_i(\underline{ v}(t,x))
 +\dl^2  q_i(\xi(t,x))\zeta(x)e^{-\dl t}\\
 & -\dl  we^{-\dl  t}\Phi_i'(\xi(t,x))+D_i\Phi_i''(\xi(t,x))\e(1-\e(h_\mu'(|x-x_0|)\r)^2\r)\\
 &-\Phi_i'(\xi(t,x))\left(\frac{c}{4}-D_ih_\mu''(|x-x_0|)-D_i\frac{N-1}{|x-x_0|}h_\mu'(|x-x_0|)\right)\\
 &-\dl\zeta(x)e^{-\dl t} q_i'(\xi(t,x))\left(\frac{3c}{4}-\dl  we^{- \dl t}\right)
 +\dl D_i\Delta q_i(\xi(t,x))\zeta(x)e^{-\dl t}\\
 &
 +2\dl D_i\nabla q_i(\xi(t,x))\nabla\zeta(x)e^{-\dl t}
 +\dl D_i q_i(\xi(t,x))\Delta\zeta(x)e^{-\dl t}.
 \end{align*}

If $\xi(t,x)\leq-C_1$, then by \eqref{2C.a} and \eqref{hv}, one has that
$Q(\xi(t,x))=Q_0$, $h_\mu(|x-x_0|)\geq H_\mu$ and $h_\mu'(|x-x_0|)=1$.
Note that \eqref{bvd} and \eqref{c} lead to
$\bm0\leq\underline{v}(t,x)\leq\Phi(\xi(t,x))\leq\delta q_*\cdot\bm1\leq4\vp_0\cdot\bm1$.
By \eqref{2C.3}, one has
\begin{align*}
 F_i(\Phi(\xi(t,x)))-F_i(\underline{ v}(t,x))
\leq\sum_{j=1}^m\mu_{ij}^0\dl q_j^0\zeta(x)e^{-\dl t}
\leq-\dl \varpi q_i^0\zeta(x)e^{-\dl t}.
\end{align*}
Since $\P_i'>0$, it follows from \eqref{2C.b}, \eqref{2l.a}, \eqref{bvd}  and \eqref{hv} that
\begin{align*}
\mathscr L_i[\underline{ v}](t,x)
\leq-\Phi'(\xi(t,x))\left(\frac{c}{4}-\frac{\mu \bar D}{2}\right)+
\dl q_i^0\zeta(x)e^{-\dl t}\left(-\varpi+\bar D\left\|\frac{\Delta \zeta}{\zeta}\right\|_{L^\i(\bar\Omega)}+\dl\right)
\leq0.
\end{align*}

 If $\xi(t,x)\geq C_1$, then  $ Q(\xi(t,x))=Q_1$ by \eqref{2C.a}. By \eqref{2C.a}, \eqref{bvd} and \eqref{c}, one has
$$
\bm1\geq\Phi(\xi(t,x))\geq\underline{v}(t,x)\geq\bm1-\delta q_*\cdot\bm1-\delta q_*\|\zeta\|_{L^\i(\bar\Omega)}\cdot\bm1\geq1-4\vp_0\cdot\bm1.
$$
From \eqref{2C.3}, there holds
\begin{align*}
F_i(\Phi(\xi(t,x)))-F_i(\underline{ v}(t,x))
\leq\sum_{j=1}^m\mu_{ij}^1\dl q_j^1\zeta(x)e^{-\dl t}
\leq-\dl \varpi q_i^1\zeta(x)e^{-\dl t}.
\end{align*}
On the other hand, it follows from \eqref{hv} and  \eqref{c}  that $D_i\Phi_i''(\xi(t,x))\e(1-\e(h_\mu'(|x-x_0|)\r)^2\r)\leq0$.
Thus, by similar arguments as above, one obtains that $\mathscr L_i[\underline{ v}](t,x)\leq0$.

If $-C_1\leq\xi(t,x)\leq C_1$, then $h_\mu(|x-x_0|)\geq H_\mu$. Hence $h_\mu'(|x-x_0|)=1$ by \eqref{hv}.
Since $\bm0\leq\underline{ v}(t,x)\leq\Phi(\xi(t,x))\leq\bm 1$, it follows from  \eqref{2C.2} and \eqref{2C.7}
that
\begin{align*}
F_i(\Phi(\xi(t,x)))-F_i(\underline{ v}(t,x))
\leq&\dl\Lambda q^*\zeta(x)e^{-\dl t}.
\end{align*}
By \eqref{2C.2}, \eqref{2C.b}, \eqref{bvd}, \eqref{hv}, \eqref{kp} and \eqref{w}, one gets that
\begin{align*}
\mathscr L_i[\underline{ v}](t,x)
\leq&\dl\Lambda q^*\zeta(x)e^{-\dl t}+\dl  q^*\zeta(x)e^{-\dl t}-\dl  w\kappa
 e^{-\dl t}-\frac{3}{4}\dl c\zeta(x)e^{-\dl t}q_i'(\xi(t,x))\\
 &+\dl\zeta(x)e^{-\dl t}D_iq_i''(\xi(t,x))\e(h_\mu'(|x-x_0|)\r)^2
 +\dl \bar Dq^*\|\Delta\zeta\|_{L^\i(\bar\Omega)}e^{-\dl t}\\
 &-\dl\zeta(x)e^{-\dl t}D_i
 q_i'(\xi(t,x))\left(h_\mu''(|x-x_0|)+\frac{N-1}{|x-x_0|}h_\mu'(|x-x_0|)\right)
\\
&+2\dl D_i|q_i'(\xi(t,x))||h_\mu'(\xi(t,x))|\|\nabla\zeta\|_{L^\i(\bar\Omega)}e^{-\dl t}\\
\leq&\dl e^{-\dl t}\left(-  w\kappa+(\Lambda+ 1) q^*\|\zeta\|_{L^\i(\bar\Omega)}+\e(\frac{3c}{4}+\bar D
+\frac{\mu\bar D}{2}\r)M\|\zeta\|_{L^\i(\bar\Omega)}\right.\\
&\left.+2\bar D M\|\nabla\zeta\|_{L^\i(\bar\Omega)}
+\bar Dq^*\|\Delta\zeta\|_{L^\i(\bar\Omega)}\right)\\
\leq&0.
\end{align*}

{\it Step 3: conclusion.}
By the comparison principle, there holds
$v_{x_0,R_1}^\dl(t,x)\geq\underline{ v}(t,x)$  for
$0\leq t\leq \tilde T$ and $x\in\bar\Omega$.
For $x\in \bar{B(x_0,R_2)}(\subset\Omega)$, it follows from \eqref{hr} and \eqref{3r} that
$$
\xi(\tilde T,x)\geq-R_2-h_\mu(0)+\frac{3}{4}c\tilde T+H_\mu+C_1\geq C_1.
$$
Combining with \eqref{c}, one obtains that
$$
v_{x_0,R_1}^\dl(\tilde T,x)\geq\underline{v}(\tilde T,x)\geq\Phi(\xi(\tilde T,x))\geq(1-\dl q_*)\cdot\bm1 \ \ \text{ for }
0\leq t\leq \tilde T\text{ and }x\in \bar{B(x_0,R_2)}.
$$
The proof is complete.
\end{pr}
\section{Entire solution emanating a planar traveling front}\label{s3}
This section is devoted to the existence, uniqueness, monotonicity and large time behavior of the entire solution emanating from the planar traveling front, that is,  we prove Theorem \ref{t1}.
To this end, we first construct a pair of sub- and supersolutions of \eqref{1.1}.
In the sequel, let $a$ and $b$ be the constants given by \eqref{1.a},
let $q_*$, $q^*$, $M$, $\vp_0$, $\varpi$, $\Lambda$, $\bar D$, $\underline D$, $\eta$ and $L$ be
the positive constants defined as in the
beginning of section \ref{s2}, let $Q$ and $\zeta$ be the functions defined as in \eqref{2C.a} and \eqref{zeta},  respectively.

\begin{lem}{\label{l2.2}}
There exist some constants $w>0$,  $\delta>0$ and $T<0$ such that the functions ${u}^{\mp}=({u}_1^{\mp},\cdots,{u}_m^{\mp})$ defined by
\begin{equation*}
  {u}^{-}(t,x)=\max\left(\P(\xi^-(t,x))-{2a}{q_*^{-1}}\delta Q(\xi^-(t,x))\zeta(x) e^{\eta t},\bm{0}\right)
\end{equation*}
and
\begin{equation*}
  {u}^{+}(t,x)=\min\left(\P(\xi^+(t,x))+{2a}{q_*^{-1}}\delta Q(\xi^+(t,x))\zeta(x) e^{\eta t},\bm{1}\right)
\end{equation*}
 are sub- and supersolutions of \eqref{1.1} for $t\leq T$ and $x\in\overline\Omega$ respectively, where
 $\xi^\mp(t,x)=x_1+c t\mp we^{\eta t}$.
\end{lem}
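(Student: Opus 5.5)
We verify the claim for $u^-$; the argument for $u^+$ is symmetric, with $\min$ in place of $\max$ and reversed inequalities. First the boundary condition. Choose $T<0$ so negative that on $t\le T$ and $|x|\le L$ (which contains $K$ and the support of $\nabla\tilde\zeta$) we have $\xi^-(t,x)\le -C$, where $C$ is chosen so that $\Phi\le a e^{b\xi}\cdot\bm1$ is small there. On $\partial\Omega$ we have $\nu\cdot\nabla\xi^-=\nu_1$, $|\nu_1|\le1$, and $\nu\cdot\nabla\zeta=1$. Hence, wherever $u^-_i>0$,
\[
\nu\cdot\nabla u_i^-=\Phi_i'(\xi^-)\nu_1-\tfrac{2a}{q_*}\delta\big(q_i'(\xi^-)\nu_1\zeta+q_i(\xi^-)\big)e^{\eta t}.
\]
For $\xi^-\le0$ we have $Q=Q_0$, so $q_i'=0$ and $q_i\ge q_*$. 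Therefore
\[
\nu\cdot\nabla u^-_i\le a e^{b\xi^-}-2a\delta e^{\eta t}.
\]
This is $\le0$ provided $e^{b(x_1+ct)}\le 2\delta e^{\eta t}$ on $|x_1|\le L$. That holds for $t\le T$ because $\eta\le bc/2<bc$. This is exactly why $\eta\le bc/2$ and the factor $2a/q_*$ appear, and it fixes $T$ once $\delta$ is given.

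Next the interior inequality $\mathscr L_i[u^-]\le0$, wherever $u^-_i>0$. The term $\Phi_i'\,\xi^-_t$ contributes $\Phi_i'(c+w\eta e^{\eta t})$, which cancels against $c\Phi_i'$ from the wave equation and leaves $+w\eta e^{\eta t}\Phi_i'$. Write $\varepsilon(t,x)=\tfrac{2a}{q_*}\delta e^{\eta t}\zeta$. Then $\mathscr L_i[u^-]$ equals
\[
F_i(\Phi)-F_i(u^-)+w\eta e^{\eta t}\Phi_i'-\tfrac{2a}{q_*}\delta e^{\eta t}\Big(\eta q_i\zeta+q_i'\zeta(c+w\eta e^{\eta t})-D_i\big(q_i''\zeta+2q_i'\partial_1\zeta+q_i\Delta\zeta\big)\Big).
\]
We split into three cases, as in Lemma~\ref{key}.

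\emph{Case $\xi^-\le -C_1$.} Here $Q=Q_0$, and both $\Phi$ and $u^-$ lie in $B_m(\bm0,4\varepsilon_0)$ once $\delta$ is small, with $\zeta$ bounded. Then \eqref{2C.3} gives $F_i(\Phi)-F_i(u^-)\le-\varpi\varepsilon q_i^0$. The remaining terms are $\varepsilon q_i^0(\eta+\bar D\|\Delta\zeta/\zeta\|_\infty)\le\varepsilon q_i^0\varpi$, using $\eta\le\varpi/2$ and \eqref{2l.a}. But we also have the term $+w\eta e^{\eta t}\Phi_i'$, which has the wrong sign for a subsolution. So the sign convention needs care. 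Writing $\xi^-=x_1+ct-we^{\eta t}$ gives $\xi^-_t=c-w\eta e^{\eta t}$, so the term is actually $-w\eta e^{\eta t}\Phi_i'\le0$, which is good. The same correction applies to $q_i'$: the term reads $-\varepsilon q_i'(c-w\eta e^{\eta t})$.

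\emph{Case $\xi^-\ge C_1$.} Here $Q=Q_1$ and $\Phi''\le0$ by \eqref{impo1}, and the same estimate holds near $\bm1$ using $\mu^1$.

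\emph{Case $|\xi^-|\le C_1$.} Here $\Phi_i'\ge\kappa$ and the Lipschitz bound $F_i(\Phi)-F_i(u^-)\le\Lambda q^*\varepsilon$ applies. All $\delta$-terms are bounded by $C\delta e^{\eta t}$, where $C$ is built from $M,\Lambda,\bar D,c,\|\zeta\|_{W^{2,\infty}}$, provided $w\eta e^{\eta t}\le c$ (a consequence of $T$ being negative). It then suffices to choose
\[
w\ge \kappa^{-1}\eta^{-1}\tfrac{2a}{q_*}\Big((\Lambda+1)q^*\|\zeta\|_\infty+(2c+\bar D)M\|\zeta\|_\infty+2\bar DM\|\nabla\zeta\|_\infty+\bar Dq^*\|\Delta\zeta\|_\infty\Big)
\]
so that $-w\eta\kappa e^{\eta t}$ dominates. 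The unknown-sign term $q_i'$ vanishes outside $[0,1]$, so it only matters in this middle case.

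The order of constants is: first $\delta$ small, so that the perturbations stay inside the $4\varepsilon_0$-balls and \eqref{2C.3} applies; then $w$ as above; then $T$ negative enough for both the boundary estimate and $w\eta e^{\eta T}\le c$. For $u^+$ we additionally use $u^+\le\bm1$ and the lower bound $\Phi''\ge-\bar K_1\Phi'$ from \eqref{impo}, which controls the error where $\partial_1$ hits $\Phi$ if needed.

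The main obstacle is the Neumann condition near $K$. The sign of $\nu_1\Phi'$ is uncontrolled, so it must be beaten by the $\zeta$-correction. This requires the front to be exponentially far from $K$, with a decay rate faster than $e^{\eta t}$, which is the reason for the restriction $\eta\le bc/2$.
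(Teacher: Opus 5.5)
Your proposal is correct and follows essentially the paper's route; the paper treats $u^+$ and leaves $u^-$ as similar. In both, the Neumann condition holds because $|\Phi'(\xi^\mp)|\le ae^{b\xi^\mp}$ decays faster than the $\zeta$-correction $2a\delta e^{\eta t}$ (using $\eta\le bc/2$), and the interior is checked in three regions, with $w$ large so that $w\eta\kappa e^{\eta t}$ dominates in the middle one. In your displayed formula for $\mathscr L_i[u^-]$, write the corrected signs $-w\eta e^{\eta t}\Phi_i'$ and $-\varepsilon q_i'(c-w\eta e^{\eta t})$ that you found mid-argument; you can also drop the appeals to \eqref{impo1} and \eqref{impo}, since $\Phi''$ cancels exactly through the profile equation.
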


\begin{pr}
We only prove that the function $u^+$  is a supersolution of \eqref{1.1} in $(-\i,T]\times\bar\Omega$, the proof of subsolution is similar.
We first introduce some notations and parameters. Define
\begin{equation}{\label{2l.1}}
\delta=
   \frac{2\varepsilon_0}{q_*}(<1).
\end{equation}
Since $\Phi'\gg\bm0$, there exists a constant $C>1$ such that
\begin{align}\label{cc}
\P\leq\delta q_*\cdot\bm1\ \text{ in }(-\i,-C]\ \text{ and }\ \P\geq(1-\delta q_*)\cdot\bm1\ \text{ in }[C,+\i).
\end{align}
Furthermore, there exists a constant $\kappa>0$ such that
\begin{equation}{\label{2l.b}}
\min\limits_{i=1,\cdots, m}\left(\inf\limits_{-C\leq\xi\leq C}\Phi_i'(\xi)\right)=\kappa.
\end{equation}
Choose $w>0$ sufficiently large such that
\begin{align}{\label{2l.5}}
w\eta\kappa \geq&{2a}{q_*^{-1}}\left((cM+\eta M+\bar D M+\Lambda q^*) \|\zeta\|_{L^\i(\bar\Omega)}
+2\bar D M\|\nabla\zeta\|_{L^\i(\bar\Omega)}
+\bar Dq^*\|\Delta\zeta\|_{L^\i(\bar\Omega)}
\r).
\end{align}
Take $T<0$ small enough such that
\begin{align}\label{2l.2}
e^{\eta T}<\min\left(\frac{1}{w},\frac{q_*^2}{2aq^*\|\zeta\|_{L^{\infty}(\overline\Omega)}},
\frac{\kappa q_*}{4a\delta M\|\zeta\|_{L^{\infty}(\overline\Omega)}}
\right)\ \text{ and }\
 L+\frac{c t}{2}+1\leq\frac{\ln\delta}{b}(<0).
\end{align}

Next, we verify the boundary conditions. Fix any $i=1,\cdots,m$.
For all $t\leq T$ and $x\in\partial\Omega$, it follows from  \eqref{ck} and \eqref{2l.2} that $\xi^+(t,x)\leq L+c t+1<0$, then $ Q(\xi^+(t,x))= Q_0$ by \eqref{2C.a}.
From \eqref{1.a}, \eqref{2C.b}, \eqref{ck} and \eqref{2l.2}, one has
\begin{align*}
|\nabla \Phi_i(\xi^+(t,x))|
=|\Phi_i'(\xi^+(t,x))|
\leq a e^{b(x_1+c t+we^{\eta t})}
\leq a e^{b\left(L+\frac{c T}{2}+1\right)}e^{\frac{b c}{2}t}
\leq a \delta e^{\eta t}
\end{align*}
for all  $ t\leq T$ and $x\in\p\Omega$. Since $q_i^0\geq q_*$ by \eqref{2C.2} and $\nu\cdot\nabla\zeta=1$ on $\p\Omega$,
one concludes that
\begin{align*}
    \nu\cdot \nabla u_i^+(t,x)&=\nu\cdot\nabla \P_i(\xi^+(t,x))
     +{2a}{q_*^{-1}}\delta q_i^0e^{\eta t}
     \geq-a\delta e^{\eta t}+2a\delta e^{\eta t}
     \geq0
   \end{align*}
for all $ t\leq T$ and $x\in\p\Omega$ such that $u_i^+(t,x)<1$.

It suffices to prove that
$\mathscr L_i[ u^+](t,x)=( u_i^+)_t(t,x)-D_i\Delta u_i^+(t,x)-F_i(u^+(t,x))\geq0
$
 for all $(t,x)\in(-\infty, T]\times \Omega$ such that $u_i^+(t,x)<1$.
 By direct calculations, there holds
 \begin{align*}
\mathscr L_i[ u^+](t,x)
=&w\eta e^{\eta t}\Phi_i'(\xi^+(t,x))+{2a}{q_*^{-1}}\delta(c+w\eta e^{\eta t}) q_i'(\xi^+(t,x))\zeta(x) e^{\eta t}\\
&+{2a}{q_*^{-1}}\delta\eta q_i(\xi^+(t,x))\zeta(x) e^{\eta t}
-2D_i{a}{q_*^{-1}}\delta q_i{''}(\xi^+(t,x))\zeta(x)e^{\eta t}\\
&-4D_i{a}{q_*^{-1}}\delta \nabla q_i(\xi^+(t,x))\cdot\nabla\zeta(x)e^{\eta t}
-2D_i{a}{q_*^{-1}}\delta q_i(\xi^+(t,x))\Delta\zeta(x) e^{\eta t}\\
&+F_i(\P(\xi^+(t,x)) ) -F_i(u^+(t,x)).
\end{align*}

If $\xi^+(t,x)\leq-C$,  then
 $ Q(\xi^+(t,x))= Q_0$
and $\P(\xi^+(t,x))\leq\dl q_*\cdot\bm1$ because of  \eqref{2C.a} and \eqref{cc}.
By \eqref{2C.2}, \eqref{2l.1}  and \eqref{2l.2}, one infers that
$$
\bm0\leq \P(\xi^+(t,x))\leq  u^+(t,x)\leq\left(\delta q_*+{2a}\delta {q_*^{-1}}q^*\|\zeta\|_{L^{\infty}(\overline\Omega)}e^{\eta T}\right)\cdot\bm1\leq2\delta q_*\cdot\bm1\leq4\varepsilon_0\cdot\bm1.
$$
 By \eqref{2C.3}, there holds
\begin{align*}
 F_i(\P(\xi^+(t,x))) -F_i( u^+(t,x))
 \geq- {2a}{q_*^{-1}}\dl\zeta(x) e^{\eta t}\sum\limits_{j=1}^m\mu_{ij}^0q_j^0
\geq{2a}{q_*^{-1}}\dl\zeta(x)\varpi q_i^0 e^{\eta t}.
\end{align*}
Hence, one gets from $\P_i'>0$ and \eqref{2l.a} that
\begin{align*}
\mathscr L_i[ u^+](t,x)
\geq&{2a}{q_*^{-1}}\dl\zeta(x)\varpi q_i^0 e^{\eta t} +{2a}{q_*^{-1}}\dl q_i^0\zeta(x)e^{\eta t}\left(\eta- \bar{D}\left\|\frac{\Delta\zeta}{\zeta}\right\|_{L^{\infty}(\overline\Omega)}\right)
\geq0.
\end{align*}

If $\xi^+(t,x)\geq C$, then $ Q(\xi^+(t,x))= Q_1$ by \eqref{2C.a}.
Similarly,
$\mathscr L_i[u^+](t,x)\geq0$.

If $-C\leq\xi^+(t,x)\leq C$,  it then follows from \eqref{2C.7} that
\begin{align*}
 F_i(\P(\xi^+(t,x))) -F_i( u^+(t,x))
 \geq-{2a}{q_*^{-1}} \Lambda \dl q_i(\xi^+(t,x))\zeta(x) e^{\eta t}
 \geq-{2a}{q_*^{-1}} \Lambda \dl q^*\|\zeta\|_{L^\i(\bar\Omega)} e^{\eta t}.
\end{align*}
By \eqref{2C.2}, \eqref{2C.b},  \eqref{2l.1}, \eqref{2l.b}, \eqref{2l.5} and  \eqref{2l.2}, we have
\begin{align*}
\mathscr L_i[ u^+](t,x)
\geq&w\eta\kappa e^{\eta t}-{2a}{q_*^{-1}}e^{\eta t}\left((cM+\eta M+\bar D M+\Lambda q^*) \|\zeta\|_{L^\i(\bar\Omega)}\r.
\\
&\left.+2\bar D M\|\nabla\zeta\|_{L^\i(\bar\Omega)}+\bar D  q^*\|\Delta\zeta\|_{L^\i(\bar\Omega)} \r)\\
\geq&0.
\end{align*}

As a result, the function $u^+$ is a supersolution of \eqref{1.1}  in $(-\i, T]\times\bar\Omega$. The proof is complete.
\end{pr}
\vspace{0.2cm}

\begin{pr}[{Proof of Theorem \ref{t1}}] We divide the proof into several steps.

{\it Step 1: existence of entire solutions.}
Let $u^\pm$ be the sub- and supersolutions defined as in Lemma \ref{l2.2}.
Let $n\in\mathbb N\cap[|T|+1,+\i)$ and let $u_n(t,x)$ be the solution of \eqref{1.1} for all $t\geq-n$ with the initial condition $u_n(-n,x)=u^+(-n,x)$ for all $x\in\overline\Omega$. By the definition of $ u^\pm$, there holds $u^-(-n,x)\leq u^+(-n,x)$ for all $x\in\overline\Omega$. According to the comparison principle, one has
\begin{equation}\label{t1.1}
  u^-(t,x)\leq u_n(t,x)\leq u^+(t,x)\ \ \text{ for all } t\in[-n, T] \text{ and } x\in\overline\Omega.
\end{equation}
Therefore, $ u_n(-n+1,\cdot)\leq u^+(-n+1,\cdot)= u_{n-1}(-n+1,\cdot)$ in $\overline\Omega$. Applying the comparison principle again, one gets that
$ u_{n}(t,x)\leq u_{n-1}(t,x)$ for all $t\in[-n+1,
+\infty]$ and $x\in\overline\Omega$, which implies that $ u_n$ is decreasing in $n$.
By standard parabolic estimate, there exists an entire solution $ u=(u_1,\cdots,u_m)$ of \eqref{1.1}
defined in $\mathbb R\times\overline\Omega$ such that
$ u_n(t,x)\to u(t,x)$ uniformly in $(t,x)\in\mathbb R\times\overline\Omega$ as $n\to+\i$.
From (\ref{t1.1}), there holds $
  u^-(t,x)\leq u(t,x)\leq u^+(t,x)$ for $t\in[-\infty,T]$ and $x\in\overline\Omega$.
 Therefore, one gets that
\begin{equation*}
 u(t,x)-\P(x_1+c t)\to\bm0\ \ \text{ uniformly in } x\in\overline\Omega \text{ as } {t\to-\infty} ,
\end{equation*}
then the formula \eqref{1.5} follows.

{\it Step 2: monotonicity of entire solutions.}
 If $\xi^+(t,x)<0$ or $\xi^+(t,x)>1$, by \eqref{2C.a} and  $\Phi'\gg\bm0$, one gets that
 $u^+_t(t,x)\gg\bm0$ for all $t\leq T$ and $x\in\bar\Omega$.
If $0\leq\xi^+(t,x)\leq1$, thanks to \eqref{2C.2}, for each $i=1,\cdots,m$,
 \begin{align*}
 \lim_{t\to-\i}(u^+_i)_t(t,x)
 =&
 \lim_{t\to-\i}\e((c+w\eta e^{\eta t})\Phi_i'(\xi(t,x))+{2a}{q_*^{-1}}\dl(c+w\eta e^{\eta t}) q_i'(\xi(t,x))\zeta (x)e^{\eta t}\r)\\
 \geq&\lim_{t\to-\i}\e(
 c\min_{0\leq\xi\leq1}\Phi_i'(\xi)-{2a}{q_*^{-1}}\dl(c+w\eta e^{\eta t})M\zeta (x)e^{\eta t}\r)=
 c\min_{0\leq\xi\leq1}\Phi_i'(\xi)>0
 \end{align*}
uniformly in $x\in\bar\Omega$.
As a result,
 $u^+(t,\cdot )$ is increasing in sufficiently negative $t$.
Hence, ${( u_n)}_t(-n,\cdot)\gg\bm0$ for large $n$.
 Consider the parabolic initial-boundary problem
 \begin{align*}
 \bc
 ( u_n)_{tt}(t,x)- D\Delta( u_n)_t(t,x)- F'( u_n)( u_n(t,x))_t(t,x)=\bm0,&t>-n,\ x\in\Omega,\\
 \nu\cdot\nabla( u_n)_t(t,x)=\bm 0&t>-n,\ x\in\partial\Omega,\\
 ( u_n)_t(-n,x)\gg\bm0,&x\in\Omega.
\ec
\end{align*}
 By the maximum principle and the Hopf lemma, one has
 $(u_n)_{ t}(t,x)\gg\bm0$ for $t\geq-n $ and $x\in\overline\Omega$.
 Passing to the limit $n\to+\infty$, then $u_t(t,x)\geq\bm0$ for $t\in\mathbb R$ and $x\in\overline\Omega$.
 Assume by contradiction that there exists a point $(t_0,x_0)\in\R\times\bar\Omega$
 and an index $i_0$
 such that $(u_{i_0})_t(t_0,x_0)=0$. If $(t_0,x_0)\in\R\times\Omega$,
  then the maximum principle leads to $(u_{i_0})_t(t,x)\equiv0$ for $t\leq t_0$ and $x\in\Omega$, it contradicts with \eqref{1.5}. If $(t_0,x_0)\in\R\times\p\Omega$, then the Hopf boundary lemma yields that $\nu\cdot\nabla (u_{i_0})_t(t_0,x_0)\ll0$, a contradiction. As a conclusion,
$u_t(t,x)\gg\bm0$ for $t\in\mathbb R$ and $x\in\overline\Omega$.

In addition, combining \eqref{1.5} with the Hopf boundary lemma and the  maximum principle, one gets immediately that $\bm 0\ll u(t,x)\ll\bm1$
for  $t\in\R$ and $x\in\bar\Omega$.

{\it Step 3: uniqueness of entire solutions.} For any $0<\delta< {2\vp_0}/{q^*}(<1)$,
by \eqref{1.5} and the properties of $\P$, there exists $T_\dl<0$ negative enough and $C_\dl>1$ large enough such that
\begin{align}\label{td1}
\bc
 u(t,x)\leq\delta q_*\cdot\bm 1& \text{ for all } t\leq T_\dl \text{ and } x\in\bar\Omega\text{ such that }x_1+c t\leq -C_\dl,\\
u(t,x)\geq(1-\delta q_*)\cdot\bm 1&\text{ for all } t\leq T_\dl \text{ and }x\in\bar\Omega \text{ such that } x_1+c t\geq C_\dl.
\ec
\end{align}
Even if it means decreasing $T_\dl$, one can assume that
\begin{align}\label{TD}
L+c T_\dl\leq0.
\end{align}
 Denote
\begin{align}\label{od}
\Omega_\dl(t)=\left\{x=(x_1,x')\in\Omega:|x_1+c t|\leq C_\dl\right\},
\end{align}
where $x'=(x_2,\cdots,x_N)$. We claim that there exists $\hat\kappa>0$ such that for each $i=1,\cdots,m$,
 \begin{align}\label{hk}
 (u_i)_t(t,x)\geq\hat\kappa\ \text{ for all }x\in\Omega_\dl(t)\text{ and }t\leq T_\dl.
\end{align}
Assume that there is $i_0\in\{1,\cdots,m\}$
such that \eqref{hk} does not hold.
Then there is a sequence of real numbers $(t_k)_{k\in\mathbb N}\subset(-\i,T_\dl)$ and
a sequence of points $(x_k)_{k\in\mathbb N}=(( x_{1,k},x_k'))_{k\in \mathbb N}\subset\Omega_\dl(t_k)$ such that
\begin{align}\label{uit}
(u_{i_0})_t(t_k,x_k)\to0 \text{ as }k\to+\i.
\end{align}
Up to extraction of a subsequence, we can assume that either $t_k\to t^*$
for some $t^*\in(-\i,T_\dl]$ or $t_k\to-\i$ as $k\to+\i$.
If $t_k\to t^*$ as $k\to+\i$, by \eqref{od}, one gets that $x_{1,k}$ is bounded for large $k$.
Thus, we can assume that $x_{1,k}\to x_1^*$ as $k\to+\i$.
For each
$k\in\mathbb N$, set
$$
u_k(t,x)=u(t+t_k,x+x_k) \ \text{ for all }t\leq T_\dl \text{ and }x\in\R^N\text{ such that }x_1\geq L+C_\dl+c t_k.
$$
For these $x$, it follows from  \eqref{od} that $x_1+x_{1,k}\geq
L$,
hence the function $u_k $ is well defined
since $x+x_k\in \Omega$ by \eqref{ck}.
By standard parabolic estimates,  up to extraction of a subsequence,
there is a classical solution $u^{\i}=(u_1^{\i},\cdots,u_m^{\i})$ of \eqref{1.1}
 such that $
 u_k(t,x)\to u^\i(t,x)$ locally  uniformly in $t\leq T_\dl$ and $x\in\mathbb R^N $
such that $x_1\geq L+C_\dl+c t^*$ as $k\to+\i$.
By \eqref{uit} and $u_t\gg\bm0$, one has $(u_{i_0}^\i)_t(0,\bm0)=0$
and $(u_{i_0}^\i)_t(t,x)\geq0$ for $t\in\R$ and $x\in\R^N$ such that $x_1\geq L+C_\dl+c t^*$.
Applying the maximum principle to $(u_{i_0}^\i)_t$, we obtain
$(u_{i_0}^\i)_t\equiv0$ for all $t\leq 0$ and   $x\in\R^N$ such that $x_1\geq L+C_\dl+c t^*$,
but it is impossible since \eqref{1.5} implies that
 $u_{i_0}^\i(t,x)-\P_{i_0}(x_1+x_1^*+c(t+t^*))\to0$ as $t\to-\i$ uniformly in $x\in\R^N$ such that $x_1\geq L+C_\dl+c t^*$.

If $t_k\to-\i$ as $k\to+\i$, it then follows from \eqref{od} that $x_{1,k}\to+\i$ as $k\to+\i$.
For large $k\in \mathbb N$, set $ u_k(t,x)= u(t+t_k,x+x_k)$ for $t\in\R$ and
 $x\in\R^N$ such that $x_1\geq-x_{1,k}/2$.
 For these $x$, one has $x_1+x_{1,k}\geq x_{1,k}/2\geq L$ for large $k$,
 hence $ u_k$ is well defined  for large $k$.
By standard parabolic estimates,  up to extraction of a subsequence,
there is a $C^{1,2}(\R\times\R^N)$ solution $u^{\i}=(u_1^{\i},\cdots,u_n^{\i})$ of \eqref{1.1}  such that $
u_k(t,x)\to u^\i(t,x)$ locally  uniformly in $t\in\R$ and $x\in\mathbb R^N $
 as $k\to+\i$.
 By similar arguments as above, we can arrive at a contradiction.
 The proof of \eqref{hk} is thereby complete.

 Suppose that there exists another entire solution $  v=(v_1,\cdots,v_m)$ satisfying
 $$
 v(t,x)-\P(x_1+c t)\to\bm0\ \text{ uniformly in }x\in\bar\Omega \text{ as }t\to-\i.
 $$
Let $\vp$ be any fixed constant satisfying
\begin{align}\label{vp}
0<\vp<\min\left(\dl,\frac{\dl q_*}{q^*\|\zeta\|_{L^\i(\bar\Omega)}},
\frac{\hat\kappa}{2c M\|\zeta\|_{L^\infty(\bar\Omega)}}\right)(<1).
\end{align}
Then there exists $t_\vp\in\R$ such that
\begin{align}\label{v}
-\vp q_*\cdot\bm1\leq v(t,x)-u(t,x)\leq \vp q_*\cdot\bm1 \ \text{ for all }t\leq t_\vp \text{ and }x\in\bar\Omega.
\end{align}
Define
\begin{align}\label{rho}
\rho=&
{2}{\eta^{-1} \hat\kappa^{-1}}
\left((c M+\eta q^* + \bar DM +\Lambda q^*)\|\zeta\|_{L^\i(\bar\Omega)}
+\bar Dq^*\|\Delta\zeta\|_ {L^\i(\bar\Omega)} +2 \bar DM\|\nabla\zeta\|_{L^\i(\bar\Omega)}
\right),
\end{align}
where $\hat\kappa$ is defined by \eqref{hk}.
Take any $t_0\in(-\i,T_\dl-\rho\vp]$.
For all $t\in[0,T_\dl-t_0-\rho\vp]$ and $x\in\bar\Omega$, define the functions $ w^\pm=(w^\pm_1,\cdots,w^\pm_m)$ as
$$
  w^+(t,x)=\min\left( u(\varrho^+(t),x)+\vp Q(x_1+c\varrho^+(t))\zeta(x)e^{-\eta t},\bm1\right)
$$
and
$$
w^-(t,x)=\max\left(u(\varrho^-(t),x)-\vp Q(x_1+c\varrho^-(t)))\zeta(x)e^{-\eta t},\bm0\right),
$$
 where $\varrho^\pm(t)=t+t_0\pm\rho\vp(1-e^{-\eta t})$.
We shall check the function $w^+$ is a supersolution of the problem satisfied by
 $v(t_0+\cdot,\cdot)$ in $[0,T_\dl-t_0-\rho\vp]\times\bar\Omega$,
 the fact that $w^-$ is a subsolution can be obtained by some similar arguments.
 We first check the initial and boundary conditions.
 For all $x\in\bar\Omega$, by \eqref{2C.2}, \eqref{zeta} and \eqref{v}, one has
$$
w^+(0,x)=\min\left(u(t_0,x)+\vp Q(x_1+c t_0)\zeta(x),\bm1\right)
\geq \min\left(u(t_0,x)+\vp q_*\cdot\bm1,\bm1\right)\geq v(t_0,x).
$$
By \eqref{ck} and \eqref{TD}, there holds $x_1+c\varrho^+(t)\leq L+c(t_0+\vp\rho+t)\leq L+c T_\dl\leq0$ for all $t\in[0,T_\dl-t_0-\rho\vp]$ and $x\in\p\Omega$. Then $ Q(x_1+\varrho^+(t))= Q_0$ for these $t$ and $x$ from \eqref{2C.a}.
Since $\nu\cdot\nabla\zeta=1$ on $\p\Omega$, one has  $\nu\cdot\nabla w_i^+(t,x)=\vp q_i^1>0$
for each $i=1,\cdots,m$ and  for all  $t\in[0,T_\dl-t_0-\rho\vp]$ and $x\in\p\Omega$ such that $w_i^+(t,x)<1$.

 Let $i=1,\cdots,m$ be any fixed index. It suffices to prove that
  $$
  \mathscr L_i[ w^+](t,x)=( w^+_i)_t(t,x)-D_iw^+_i(t,x)-F_i(w^+(t,x))\geq0
  $$
  for $t\in[0,T_\dl-t_0-\rho\vp]$ and $x\in\bar\Omega$ such that  $w_i^+(t,x)<1$.
   By  direct calculations, one gets that
 \begin{align*}
 \mathscr L_i[ w^+](t,x)
 =&\ \vp\rho\eta e^{-\eta t}(u_i)_t(\varrho^+(t),x)+\vp c(1+\vp\rho\eta e^{-\eta t}) q_i'(x_1+c\varrho^+(t))\zeta(x) e^{-\eta t}\\
 &-\vp\eta q_i(x_1+c\varrho^+(t))\zeta(x) e^{-\eta t}-\vp D_i q_i''(x_1+c\varrho^+(t))\zeta(x) e^{-\eta t}\\
 &-\vp D_i q_i(x_1+c\varrho^+(t))\Delta\zeta(x) e^{-\eta t}-2\vp D_i \nabla q_i(x_1+c\varrho^+(t))\cdot \nabla\zeta(x) e^{-\eta t}\\
 &+F_i(  u(\varrho^+(t),x))-F_i( w^+(t,x)).
 \end{align*}
  If $x_1+c\varrho^+(t)\leq -C_\dl$, then $ Q(x_1+c\varrho^+(t))=Q_0$ by \eqref{2C.a}.
  Note that $\varrho^+(t)\leq t+t_0+\vp\rho\leq T_\dl$.
  From \eqref{2C.2}, \eqref{td1} and \eqref{vp}, there holds
  $
   u(\varrho^+(t),x)\leq w^+(t,x)\leq\dl q_*\cdot\bm 1+\vp q^*\|\zeta\|_{L^\i(\bar\Omega)}\cdot\bm 1\leq2\delta q_*\cdot\bm 1\leq 4\vp_0\cdot\bm 1
  $.
  By \eqref{2C.3}, there holds
  \begin{align*}
 F_i(u(\varrho^+(t),x))-F_i( w^+(t,x))
  \geq-\vp\sum_{j=1}^m\mu_{ij}^0 q_j^0\zeta(x)e^{-\eta t}
  \geq\vp \varpi q_i^0\zeta(x)e^{-\eta t}.
  \end{align*}
 By $(u_i)_t>0$,  \eqref{2C.b} and \eqref{2l.a}, one has
\begin{align*}
\mathscr L_i[w^+](t,x)
\geq&-\vp q_i^0\zeta(x)e^{-\eta t}\left(\eta+\bar D\left\|\frac{\Delta\zeta}{\zeta}\right\|_{L^\i(\bar\Omega)}-\varpi\right)
\geq0.
\end{align*}

  If $x_1+c\varrho^+(t)\geq C_\dl$, one infers from \eqref{2C.a} that
  $Q(x_1+c\varrho^+(t))= Q_1$. Since $\varrho^+(t)\leq  T_\dl$,
By similar arguments as above, one gets that $\mathscr L_i[  w^+](t,x)\geq0$.

If $-C_\dl\leq x_1+c\varrho^+(t)\leq C_\dl$ (that is, $x\in\Omega_\dl(\varrho^+(t))$),
then $(u_i)_t(\varrho^+(t),x)\geq\hat\kappa$ by \eqref{hk}.
Furthermore, it follows from \eqref{2C.2} and \eqref{2C.7} that
\begin{align*}
F_i( u(\varrho^+(t),x))-F_i( w^+(t,x))
  \geq-\vp \Lambda q^*\|\zeta\|_{L^\i(\bar\Omega)}e^{-\eta t}.
\end{align*}
 Thus, one infers from \eqref{2C.2}, \eqref{2C.b}, \eqref{vp} and \eqref{rho} that
 \begin{align*}
 \mathscr L_i[ w^+](t,x)
 \geq&\vp\rho\eta\hat\kappa e^{-\eta t}-\vp^2 c\rho\eta M\|\zeta\|_{L^\i(\bar\Omega)} e^{-2\eta t}
 -\vp c M\|\zeta\|_{L^\i(\bar\Omega)} e^{-\eta t}\\
 &-\vp\eta q^* \|\zeta\|_{L^\i(\bar\Omega)}e^{-\eta t}
 -\vp \bar D M\|\zeta\|_{L^\i(\bar\Omega)} e^{-\eta t}
 -\vp\bar D  q^*\|\Delta\zeta\|_ {L^\i(\bar\Omega)} e^{-\eta t}\\
 &-2\vp \bar D M\|\nabla\zeta\|_{L^\i(\bar\Omega)} e^{-\eta t}-\vp \Lambda q^*\|\zeta\|_{L^\i(\bar\Omega)}\\
 \geq&\vp e^{-\eta t}\left({\rho\eta \hat\kappa}-\vp c\rho\eta M\|\zeta\|_{L^\i(\bar\Omega)} e^{-\eta t}-\frac{\rho\eta \hat\kappa}{2}\r)\\
 \geq&0.
 \end{align*}

 By the comparison principle, there holds
 $$
  w^-(t-t_0,x)\leq  v(t,x)\leq w^+(t-t_0,x)\ \text{ for }t\in[t_0,T_\dl-\rho\vp]\text{ and }x\in\bar\Omega.
 $$
 Passing the limit $t_0\to-\i$, one has
 $$
 u(t-\vp\rho,x)\leq v(t,x)\leq u(t+\vp\rho,x)\ \text{ for }t\in(-\i,T_\dl-\rho\vp]\text{ and }x\in\bar\Omega.
 $$
 According to the comparison principle and the arbitrariness of $\vp$,
 there holds $ u(t,x)\equiv v(t,x)$ for all $t\in\R$ and $x\in\bar\Omega$.
 As a conclusion, we obtain the uniqueness of the entire solution obtained in step 1.

{\it Step 4: Convergence of entire solution.} Since $u_t\gg\bm0$ and $\bm0\ll u\ll \bm1$,
 standard parabolic estimates yield that
\begin{align}\label{u-i}
 u(t,x)\to  u_\i(x)\in[\bm0,\bm1]\ \text{ locally uniformly in $x\in\bar\Omega$ as $t\to+\i$,}
 \end{align}
 where $u_\i$ is a classical solution the corresponding stationary equation of \eqref{1.1}.
 Fix an arbitrary small positive constant $ \delta$.
By $\Phi(+\i)=\bm1$, \eqref{1.5} and \eqref{ck},
 there exist  $T_0<0$  and $\xi_0>L$   such that
\begin{align}\label{H}
u(T_0,\cdot)\geq (1-\delta q_*)\cdot\bm1 \ \ \text{  in }\left\{x\in\mathbb R^N: x_1\geq\xi_0\right\}(\subset\Omega).
 \end{align}
 Let $R_1=R_1(\dl)$,  $R_2=R_2(\dl)$, $R_3=R_3(\dl)$ and $\tilde T=\tilde T(\dl)$ with
 $R_1<R_2<R_3$ be defined as in Lemma \ref{key}.
 Take any point $x\in\mathbb R^N$ satisfying $|x|>L+R_3-R_2$. Note that
 \eqref{ck} implies $x\in\Omega$. It is evident that there exist
 an integer $k\geq1$ and $k$ points $x^1,\cdots,x^k$ in $\R^N$ such that
 \begin{align*}
\bc
B(x^1,R_1)\subset\left\{x\in\mathbb R^N: x_1\geq\xi_0\right\},\\
B(x^i, R_3)\subset\Omega&\text{for }1\leq i\leq k,\\
B(x^{i+1}, R_1)\subset B(x^{i}, R_2)&\text{for }1\leq i\leq k-1,\\
x\in B(x^k,R_2).
\ec
 \end{align*}
By  \eqref{H} and Lemma \ref{key}, one has
$u(T_0+\tilde T,\cdot)\geq (1-\delta q_*)\cdot\bm1 $ in $B(x^1,R_2)$.
Since $B(x^2,R_1)\subset B(x^1,R_2)$, one similarly gets that
$ u(T_0+2\tilde T,\cdot)\geq (1-\delta q_*)\cdot\bm1 $ in $B(x^2,R_2)$.
By induction, $u(T_0+k\tilde T,\cdot)\geq (1-\delta q_*)\cdot\bm1 $ in $B(x^k,R_2)$. Hence
 $ u(T_0+k\tilde T,x)\geq (1-\delta q_*)\cdot\bm1 $
for all $x\in\mathbb R^N$ such that $|x|>L+R_3-R_2$.
Together with \eqref{u-i},
one has
$$
u_\i(x)\geq (1-\delta q_*)\cdot\bm1 \ \
\text{ for all }x\in\R^N\text{ such that }|x|>L+R_3-R_2.
$$
Combining with $u_\i\leq\bm1$ and the arbitrariness of $\delta$, one has $ u_\i(x)\to\bm1$ as $|x|\to+\i$.
In addition, owing to the maximum principle and Hopf boundary lemma, there holds $\bm0\ll u_\i\leq\bm1$.
The proof of Theorem \ref{t1} is now complete.
\end{pr}

\section{large time behavior by providing complete propagation}\label{s4}
In this section, we prove that  the entire solution obtained in Theorem \ref{t1} converges to the same planar traveling front
as time goes to infinity by providing complete propagation.
We first analyze the relationship between  the entire solution and the planar traveling front at large time.
Then, we complete the proof of Theorem \ref{t2}.
In what follows, let $a$ and $b$ be the constants given by \eqref{1.a},
let $q_*$, $q^*$, $M$, $\vp_0$, $\varpi$, $\Lambda$, $\bar D$, $\underline D$, $\eta$ and $L$ be
the positive constants defined as in the
beginning of section \ref{s2}, let  $Q$ and $\zeta$ be the functions defined as in \eqref{2C.a} and \eqref{zeta},  respectively.

\subsection{Relationship between  the entire solution and the planar traveling front}
\begin{lem}\label{w-}
For any small $\dl>0$, there exist some constants $\beta>0$, $\rho>0$ and $T^*>0$
such that for any $T\geq T^*$,
 the functions $ w^\mp=(w^\mp_1,\cdots,w^\mp_m)$  defined by
$$
 w^-(t,x)=\max\left(  \Phi(\xi^-(t,x))-\dl  Q(\xi^-(t,x))\zeta(x)e^{-\beta t} ,\bm 0\right)
$$
and
$$
  w^+(t,x)=\min\left(\Phi(\xi^+(t,x))+\dl   Q(\xi^+(t,x))\zeta(x)e^{-\beta t} ,\bm 1\right)
$$
are  sub- and supersolutions of \eqref{1.1} for $t\geq0$ and $x\in\bar\Omega$ respectively, where
$$
\xi^\mp(t,x)=x_1+c(t+T)\mp\rho\dl (1-e^{-\beta t}).
$$
\end{lem}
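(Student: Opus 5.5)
The plan is to copy the construction of Lemma \ref{l2.2} and of $w^\pm$ in Step 3 of the proof of Theorem \ref{t1}, with two changes. The perturbation now decays in time like $e^{-\beta t}$. The shift $\rho\dl(1-e^{-\beta t})$ is what supplies the positive term in the transition zone. I only treat $w^+$; the subsolution $w^-$ is symmetric.

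\textbf{Parameters.} I take $\beta=\eta=\min(bc/2,\varpi/2)$ from \eqref{2C.b}. I take $\dl$ small, say $0<\dl<\min\bigl(2\vp_0/(q^*\|\zeta\|_{L^\i(\bar\Omega)}),1\bigr)$.
\begin{itemize}
\item Choose $C>1$ as in \eqref{cc}, with $\P\le \dl q_*\cdot\bm1$ on $(-\i,-C]$ and $\P\ge(1-\dl q_*)\cdot\bm1$ on $[C,+\i)$.
\item Let $\kappa=\min_i\inf_{[-C,C]}\P_i'>0$.
\item Choose $\rho$ so that
\[
\rho\beta\kappa\ge 2\Bigl((cM+\beta q^*+\bar DM+\Lambda q^*)\|\zeta\|_{L^\i(\bar\Omega)}+2\bar DM\|\nabla\zeta\|_{L^\i(\bar\Omega)}+\bar Dq^*\|\Delta\zeta\|_{L^\i(\bar\Omega)}\Bigr),
\]
in the spirit of \eqref{rho}.
\end{itemize}
The extra factor $2$ absorbs the term $\rho\dl\beta e^{-\beta t}q_i'$ that comes from $\xi_t=c+\rho\dl\beta e^{-\beta t}$; for this I also need $\dl\rho\beta M\|\zeta\|_{L^\i(\bar\Omega)}$ small, i.e.\ $\dl$ small relative to $\rho$.

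\textbf{Interior inequality.} I compute $\mathscr L_i[w^+]$ exactly as in Lemma \ref{l2.2}. The time derivative of the shift contributes $+\rho\dl\beta e^{-\beta t}\P_i'(\xi^+)$, and the remaining terms are $O(\dl e^{-\beta t})$. I split into the same three regions.
\begin{itemize}
\item If $\xi^+\le -C$, then $Q=Q_0$ and $\bm0\le\P\le w^+\le 4\vp_0\cdot\bm1$. By \eqref{2C.3}, $F_i(\P)-F_i(w^+)\ge \dl\varpi q_i^0\zeta e^{-\beta t}$. This dominates $-\dl\beta q_i^0\zeta e^{-\beta t}$ together with the $\Delta\zeta$ term, by \eqref{2l.a} and $\beta\le\varpi/2$. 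The term with $\P_i'$ is nonnegative, so $\mathscr L_i[w^+]\ge0$.
\item If $\xi^+\ge C$, the argument is the same with $Q_1$ and $\mu^1_{ij}$.
\item If $|\xi^+|\le C$, I bound $F_i(\P)-F_i(w^+)$ from below by $-\dl\Lambda q^*\|\zeta\|_{L^\i(\bar\Omega)}e^{-\beta t}$ using \eqref{2C.7}. The good term $\rho\dl\beta\kappa e^{-\beta t}$ then beats all the rest by the choice of $\rho$.
\end{itemize}
None of this depends on $T$.

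\textbf{Boundary condition.} This is where $T^*$ enters, and I expect it to be the main obstacle. The reason is that $\nu\cdot e$ has no sign on $\p\Omega$, so the term $\P_i'(\xi^+)\,\nu_1$ has to be controlled by the correction term $\dl q_i(\xi^+)\,\nu\cdot\nabla\zeta\, e^{-\beta t}=\dl q_i e^{-\beta t}$.
\begin{itemize}
\item For $x\in\p\Omega$ we have $|x|\le L$ by \eqref{ck}. Hence $\xi^\pm(t,x)\ge -L+c(t+T)-\rho\dl$.
\item Choose $T^*$ with $-L+cT^*-\rho\dl\ge C$. Then $Q(\xi^\pm)=Q_1$ on the boundary, so $q_i=q_i^1\ge q_*$.
\item By \eqref{1.a}, $|\P'(\xi^\pm)|\le a e^{-b(cT-L-\rho\dl)}e^{-bct}$. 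Since $\beta\le bc/2<bc$, enlarging $T^*$ so that $a e^{-b(cT^*-L-\rho\dl)}\le \dl q_*$ gives $|\P_i'(\xi^\pm)|\le \dl q_* e^{-\beta t}$ for all $t\ge0$ and $T\ge T^*$.
\end{itemize}
Therefore $\nu\cdot\nabla w_i^+\ge -\dl q_*e^{-\beta t}+\dl q_i^1e^{-\beta t}\ge0$ wherever $w_i^+<1$, and $\nu\cdot\nabla w_i^-\le0$ wherever $w_i^->0$.

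Combining the interior inequality with the boundary condition, and using Definition \ref{dss} to pass to the $\min$ with $\bm1$ and the $\max$ with $\bm0$, gives the lemma.
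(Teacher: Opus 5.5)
Your construction is the paper's: the same ansatz, the same three-region split, and the same choice of $T^*$ so that on $\partial\Omega$ one has $\xi^\pm\ge 1$, $Q=Q_1$ and $|\Phi'|\le\delta q_*e^{-bct}\le\delta q_*e^{-\beta t}$. The paper writes out $w^-$ rather than $w^+$. One step does not close as you set it up: the transition-zone estimate for $w^+$.

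\textbf{The gap.} The cross term $\rho\delta^2\beta e^{-2\beta t}q_i'(\xi^+)\zeta$ has the unfavorable sign for $w^+$, since $q_i'\le0$. For $w^-$ it has the good sign, which is why the paper's written case never meets it. Absorbing it requires $\delta M\|\zeta\|_{L^\infty(\overline\Omega)}\le\kappa/2$, not "$\delta$ small relative to $\rho$". Under that inequality, half of $\rho\delta\beta\kappa e^{-\beta t}$ absorbs the cross term and your factor $2$ in $\rho$ handles the remaining $O(\delta e^{-\beta t})$ terms.

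In your parameter order this cannot be obtained by shrinking $\delta$. Your $C$, and hence $\kappa$, is tied to $\delta$ through $\Phi\le\delta q_*\cdot\bm{1}$ on $(-\infty,-C]$. By \eqref{impo}, $\Phi_i'(s)\ge\Phi_i'(-C)e^{-\bar K_1(-C-s)}$ for $s\le-C$. Integrating gives $\Phi_i'(-C)\le\bar K_1\Phi_i(-C)$, so $\kappa\le\bar K_1q_*\delta$. Your choice of $\rho$ then forces
\[
\delta\rho\beta M\|\zeta\|_{L^\infty(\overline\Omega)}\ge \frac{2cM^2\|\zeta\|_{L^\infty(\overline\Omega)}^2}{\bar K_1q_*}
\]
for every $\delta$. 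So the smallness you ask for is unattainable, and the inequality $\delta M\|\zeta\|_{L^\infty(\overline\Omega)}\le\kappa/2$ can fail for all small $\delta$.

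\textbf{Two fixes.} Either one makes the rest of your argument go through.
\begin{itemize}
\item Decouple as the paper does. Define $C$ and $\kappa$ from a fixed threshold $\delta_1<2\varepsilon_0$; the estimates near $\bm{0}$ and $\bm{1}$ only need this, provided $\delta q^*\|\zeta\|_{L^\infty(\overline\Omega)}\le\delta_1$. Then $\kappa$ is independent of $\delta$, and you may add $\delta\le\kappa/(2M\|\zeta\|_{L^\infty(\overline\Omega)})$ to ``$\delta$ small''.
\item Keep your choices and use that $q_i'$ and $q_i''$ vanish outside $(0,1)$. The cross term then only has to be compared with $\rho\delta\beta e^{-\beta t}\Phi_i'(\xi^+)$ for $\xi^+\in(0,1)$. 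There $\Phi_i'\ge\min_j\min_{[0,1]}\Phi_j'$, a lower bound that does not depend on $\delta$.
\end{itemize}
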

\begin{pr}
We only show that the function $w^-$ is a subsolution  of \eqref{1.1} in $[0,+\i)\times\bar\Omega$,
the supersolution $w^+$ can be obtained by similar arguments.
We first introduce some notations and parameters.
Choose
\begin{align}\label{db}
0<\dl_1<2\vp_0,\ 0<\dl<\min\e(1,\frac{\dl_1}{q^*\|\zeta\|_{L^\i(\bar\Omega)}}\r)\e(\leq\frac{2\vp_0}{q_*}\r)
\ \text{ and }\
0<\beta<\min\e(\dl,b c,\frac{\varpi}{2}\r).
\end{align}
By virtue of the properties of $\Phi$, there exists $C>1$   such that
\begin{align}\label{pc}
\Phi\leq \dl_1\cdot\bm1\ \text{ in }(-\i,-C] \  \text{ and } \
\Phi\geq (1-\dl_1)\cdot\bm1 \ \text{ in } [C,+\i).
\end{align}
Furthermore, there exists $\kappa>0$ such that
 \begin{align}\label{kapp}
\kappa=\min_{i=1,\cdots, m}\left(\inf_{\xi\in[-C,C]}\P_i'(\xi)\right).
\end{align}
Pick $\rho>0$ large enough such that
\begin{align}\label{rou}
\beta\rho\kappa\geq\left(\beta q^*+ c M+ \Lambda q^*+\bar DM\right)
\|\zeta\|_{L^\i(\bar\Omega)}
+\bar Dq^*\| \Delta \zeta\|_{L^\i(\bar\Omega)}
+2\bar D M\| \nabla \zeta\|_{L^\i(\bar\Omega)}.
\end{align}
 Let $T^*>0$ large enough such that
\begin{align}\label{T}
c T^*\geq 1+\rho +L\ \text{ and }\ a e^{-b(-L+c T^*-\rho)}\leq\dl q_*.
\end{align}
Take any $T\geq T^*$.

 We now turn to check the boundary conditions.
For $t\geq0$ and $x\in\p\Omega$, it follows from \eqref{ck} and  \eqref{T} that
$\xi^-(t,x)\geq -L+c T^*-\rho \geq1$,
then $ Q(\xi^-(t,x))=Q_1$  by \eqref{2C.a}.
From \eqref{1.a}, \eqref{2C.2}, \eqref{ck},  \eqref{db}, \eqref{T} and $\nu\cdot\nabla\zeta=1$ on $\p\Omega$, one infers that
\begin{align*}
\nu\cdot \nabla w_i^-(t,x)
\leq&|\Phi_i'(\xi^-(t,x))|-\dl q_i^1 e^{-\beta t} \\
\leq&a e^{-b(-L+c(t+T^*)-\rho)}-\dl q_*e^{-\beta t}\\
\leq& \dl q_* e^{-b c t}-\dl q_*e^{-\beta t}\\
\leq&0
\end{align*}
for each $i=1,\cdots,m$ and
for all $t\geq0$ and $x\in\p\Omega$ such that $w_i^-(t,x)>0$.

Let $i=1,\cdots,m$ be any fixed index.
It suffices to prove that
$$
\mathscr L_i[ w^-](t,x)=(w_i^-)_t(t,x)-D_i\Delta w_i^-(t,x)-F_i(w^-(t,x))\leq0
$$
for all $t\geq0$ and $x\in\Omega$ such that $w_i^-(t,x)>0$.
By direct calculations, one has
\begin{align*}
\mathscr L_i[ w^-](t,x)
=&F_i(\Phi(\xi^-(t,x)))-F_i(w^-(t,x))-\rho\dl\beta e^{-\beta t}\Phi_i'(\xi^-(t,x))+\dl\beta e^{-\beta t}q_i(\xi^-(t,x))\zeta(x)\\
&-(c-\rho\dl\beta e^{-\beta t})\dl e^{-\beta t}q_i'(\xi^-(t,x))\zeta(x)
+D_i\dl e^{-\beta t}q_i(\xi^-(t,x)) \Delta \zeta(x) \\
&
+2D_i\dl e^{-\beta t}\nabla q_i(\xi^-(t,x)) \nabla \zeta(x)
+D_i\dl e^{-\beta t} \Delta q_i(\xi^-(t,x)) \zeta(x).
\end{align*}

If $\xi^-(t,x)\geq C$, then $ Q(\xi^-(t,x))=  Q_1$ by \eqref{2C.a}.
From \eqref{db} and \eqref{pc}, there holds
$$
\bm1\geq \Phi(\xi^-(t,x))\geq   w^-(t,x)\geq (1-\dl_1)\cdot\bm1-\dl q^*\|\zeta\|_{L^\i(\bar\Omega)}\cdot\bm1\geq(1-4\vp_0)\cdot\bm1.
$$
Hence, it follows from \eqref{2C.3}   that
\begin{align*}
F_i( \Phi(\xi^-(t,x)))-F_i( w^-(t,x))
\leq\sum_{j=1}^m\mu_{ij}^1\dl q_j^1\zeta(x)e^{-\beta t}
\leq-\dl\varpi q_i^1\zeta(x)e^{-\beta t}.
\end{align*}
By a direct calculation, one can deduce from $\Phi_i'>0$, \eqref{2C.b}, \eqref{2l.a}  and \eqref{db}  that
\begin{align*}
\mathscr L_i[  w^-](t,x)
\leq&\dl e^{-\beta t}q_i^1\zeta(x)\left(\beta +\bar D\e\|\frac{\Delta\zeta}{\zeta}\r\|_{L^\i(\bar\Omega)}-\varpi\right)
\leq0.
\end{align*}

If $\xi^-(t,x)\leq- C$, then $  Q(\xi^-(t,x))=  Q_0$ by \eqref{2C.a}.
Similarly,
there holds $\mathscr L_i[ w^-](t,x)\leq0$.

We now consider the case $-C\leq\xi^-(t,x)\leq C$. Thanks to
$\bm0\leq w^-(t,x)\leq \Phi(\xi^-(t,x))\leq \bm1$,
it follows from \eqref{2C.2} and \eqref{2C.7}  that
\begin{align*}
F_i(\Phi(\xi^-(t,x)))-F_i(w^-(t,x))
\leq\dl\Lambda q^*\zeta(x)e^{-\beta t}.
\end{align*}
Hence, one can deduce from \eqref{2C.2},   \eqref{2C.b}, \eqref{2l.a}, \eqref{db}, \eqref{kapp} and \eqref{rou} that
\begin{align*}
\mathscr L_i[ w^-](t,x)
\leq&
-{\rho\dl}\beta \kappa e^{-\beta t}+\dl\beta q^* e^{-\beta t}\|\zeta\|_{L^\i(\bar\Omega)}
+\dl c e^{-\beta t}|q_i'(\xi^-(t,x))|\|\zeta\|_{L^\i(\bar\Omega)}\\
&+\bar D\dl e^{-\beta t}q^*\| \Delta \zeta\|_{L^\i(\bar\Omega)}
+2\bar D\dl e^{-\beta t} |q_i'(\xi^-(t,x))|\| \nabla \zeta\|_{L^\i(\bar\Omega)}\\
&+\bar D\dl e^{-\beta t} |q_i''(\xi^-(t,x))|\| \zeta\|_{L^\i(\bar\Omega)}
+\dl\Lambda q^*\|\zeta\|_{L^\i(\bar\Omega)}e^{-\beta t}\\
\leq&
\dl e^{-\beta t}\left(-\rho\beta \kappa+
\left(\beta q^*+ c M +\Lambda q^*+\bar DM\right)
\|\zeta\|_{L^\i(\bar\Omega)}\r.\\
&\left.+\bar Dq^*\| \Delta \zeta\|_{L^\i(\bar\Omega)}+2\bar D M\| \nabla \zeta\|_{L^\i(\bar\Omega)}
\right)\\
\leq&0.
\end{align*}

As a conclusion, $w^-$ is a subsolution of \eqref{1.1} in $[0,+\i)\times\bar\Omega$.
The proof is complete.
\end{pr}

Let $u$ be the entire solution obtained in Theorem \ref{t1}. In the sequel, we always assume that $ u$ propagates completely in the sense of \eqref{comp}.
\begin{lem}\label{up1}
For any $\vp>0$, there exist some constants $K_1>0$, $K_2>0$ and $\bar T\in\mathbb R$ such that
 \begin{align}\label{K1}
 u(t,x)\geq(1-\vp q_*)\cdot\bm1\ \ \text{ for }t\geq\bar T \text{ and }
 x\in\bar\Omega \text{ such that } x_1+c t\geq K_1
 \end{align}
 and
 \begin{align}\label{K2}
 u(t,x)\leq\vp q_*\cdot\bm1\ \ \text{ for }t\geq\bar T  \text{ and }
 x\in\bar\Omega \text{ such that }x_1+c t\leq- K_2.
 \end{align}
\end{lem}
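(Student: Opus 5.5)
The plan is to compare $u$, from a suitable initial time on, with the sub- and supersolutions $w^\mp$ of Lemma \ref{w-}. We fix $\vp>0$ and take $\dl>0$ small in Lemma \ref{w-}, in terms of $\vp$, and we enlarge the constant $C$ of \eqref{pc} so that $\Phi\le \frac12\dl q_*\cdot\bm1$ on $(-\i,-C]$ and $\Phi\ge(1-\frac12\dl q_*)\cdot\bm1$ on $[C,+\i)$. This only changes $\kappa$ and $\rho$, so Lemma \ref{w-} still applies. Since $\zeta\ge1$ and $Q\ge q_*\cdot\bm1$, the perturbation $\dl Q\zeta e^{-\beta t}$ dominates $\dl q_*\cdot\bm1$. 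Once $u$ is trapped between $w^-$ and $w^+$, the tails of $\Phi$ give \eqref{K1} and \eqref{K2} with $K_1,K_2$ of order $c|T-T_0|+\rho\dl+C$, and $\bar T$ is large enough that $\dl q^*\|\zeta\|_{L^\i(\bar\Omega)}e^{-\beta(t-T_0)}\le\frac12\vp q_*$.

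\emph{Upper bound \eqref{K2}.} We take $T_0<0$ very negative, below the $T$ of Lemma \ref{l2.2}. By Lemma \ref{l2.2} we have $u(T_0,x)\le \Phi(x_1+cT_0+we^{\eta T_0})+2aq_*^{-1}\dl' Q\zeta e^{\eta T_0}$, and $T_0$ can be chosen so negative that the last term is $\le\dl Q\zeta$. Let $T\ge T^*$ be arbitrary. Because $\Phi$ is increasing and $cT\ge cT_0+we^{\eta T_0}$, we get $u(T_0,\cdot)\le w^+(0,\cdot)$, where $w^+$ is the supersolution of Lemma \ref{w-} with parameter $T$. Lemma \ref{com} then gives $u(T_0+t,x)\le \Phi(x_1+c(t+T)+\rho\dl)+\dl Q\zeta e^{-\beta t}$ for $t\ge0$. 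Rewriting this in terms of $x_1+c(T_0+t)$ gives \eqref{K2}.

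\emph{Lower bound \eqref{K1}.} This is the delicate part, and the main obstacle is to produce a single time $T_1$ at which $u(T_1,\cdot)\ge w^-(0,\cdot)$ on all of $\bar\Omega$, where $w^-$ is taken with $T=T^*$. By the choice of $C$, $w^-(0,x)=\bm0$ wherever $x_1+cT^*\le -C$. It therefore suffices to show that $u(T_1,\cdot)\ge(1-\dl q_*)\cdot\bm1$ on the half-space $\{x_1\ge -A\}\cap\bar\Omega$, where $A=C+cT^*$. On the bounded piece $\bar\Omega\cap B(\bm\theta,L+R_3+1)$ this follows from the complete propagation assumption \eqref{comp} for $T_1$ large. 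On the rest we repeat the ball-chain argument of Step 4 in the proof of Theorem \ref{t1}, based on \eqref{H} and Lemma \ref{key}. The key observation is that every $x$ with $x_1\ge -A$ and $|x|\ge L+R_3+1$ can be reached from the half-space $\{x_1\ge\xi_0\}$ by a chain of balls $B(x^i,R_3)\subset\Omega$ whose length $k$ is bounded by a constant $\bar k$ depending only on $A$, $\xi_0$, $L$ and the $R_i$. If $x$ is far from the obstacle the chain moves straight in the $-e$ direction; otherwise it goes around $B(\bm\theta,L+R_3)$. This yields $u(T_0+k\tilde T,x)\ge(1-\dl q_*)\cdot\bm1$ with $k\le\bar k$. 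By $u_t\gg\bm0$ from Theorem \ref{t1} the same bound holds at every later time, in particular at $T_1\ge T_0+\bar k\tilde T$.

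We then apply Lemma \ref{com} to $u(T_1+\cdot,\cdot)$ and $w^-$, which gives $u(T_1+t,x)\ge\Phi(x_1+c(t+T^*)-\rho\dl)-\dl Q\zeta e^{-\beta t}$. This is \eqref{K1} after the translation $t\mapsto t-T_1$, with $\bar T$ the larger of the two threshold times.
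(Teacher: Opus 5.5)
Your proposal is correct and follows the paper's proof: it compares $u$ with the $w^\mp$ of Lemma \ref{w-}. The supersolution starts at a very negative time, where $u$ is close to $\Phi$. The subsolution starts at a time where $u\geq(1-\dl q_*)\cdot\bm1$ on $\{x_1\geq -A\}$, which you obtain from the ball chains of Lemma \ref{key}, complete propagation near $K$, and $u_t\gg\bm0$. Your two variations are harmless: you bound $u(T_0,\cdot)$ through the supersolution of Lemma \ref{l2.2} instead of \eqref{1.5}, and you state explicitly that the chain length is uniformly bounded, which the paper uses without comment, so your version is slightly more complete.
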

\begin{pr}
Let $\vp>0$ be any fixed constant. We define
\begin{align}\label{dd}
\dl=\min\e(1,\frac{2\vp}{q^*\|\zeta\|_{L^\i(\bar\Omega)}},\frac{\vp q_*}{2q^*\|\zeta\|_{L^\i(\bar\Omega)}}\r)\e(\leq\frac{\vp}{2}\r).
\end{align}
By the properties of $\Phi$, there exists $A>0$  such that
\begin{align}\label{R}
\bc
\Phi(x_1+c t)\leq\dl q_*\cdot\bm1 &\text{for } t\in\R \text{ and }x\in\R^N \text{ such that }x_1+c t\leq-A+c T^*,\\
\Phi(x_1+c t)\geq(1-\dl q_*)\cdot\bm1 &\text{for } t\in\R \text{ and }x\in\R^N \text{ such that }x_1+c t\geq A+c T^*,
\ec
\end{align}
where $T^*>0$ is defined as in Lemma \ref{w-}.
By \eqref{1.5}, there exists $\hat T<0$ negative enough such that
\begin{align}\label{tt}
-\frac{1}{2}\dl q_*\cdot\bm1\leq u(\hat T,x)-\Phi(x_1+c \hat T)\leq\frac{1}{2}\dl q_*\cdot\bm1\ \ \text{ for all }x\in\bar\Omega.
\end{align}

{\it Step 1: proof of \eqref{K1}}.
By $\Phi(+\i)=\bm1$ and \eqref{tt}, there exists $\tilde A_1>0$ large enough such that
\begin{align}\label{utt}
 u(\hat T,x)\geq \Phi(x_1+c\hat T)-\frac{1}{2}\dl q_*\cdot\bm1\geq (1-\dl q_*)\cdot\bm1
\end{align}
for all $x\in\bar\Omega$ such that $x_1+c\hat T\geq\tilde A_1$.
It is evident that $d_\Omega(y,\{x\in\bar\Omega:x_1+c\hat T\geq\tilde A_1\})<+\i$ for any $y=(y_1,y')\in\bar\Omega$ such that $y_1\geq-A$,
where $y'=(y_2,\cdots,y_N)$.
Recall that $B(\bm\theta,L)\subset K$ from \eqref{ck}.
By Lemma \ref{key}, there are four positive constants
 $R_1=R_1(\dl)$, $R_2=R_2(\dl)$, $R_3=R_3(\dl)$ and $\tilde T=\tilde T(\dl)$
satisfying $R_1<R_2<R_3$, if $B(x_0,R_3)\subset\Omega$ for some
 $x_0\in\bar\Omega$, then $v_{x_0,R_1}^\dl(\tilde T,\cdot)\geq (1-\dl q_*)\cdot\bm1$ in $\bar{B(x_0,R_2)}\subset\Omega$,
 where $v_{x_0,R_1}^\dl$ is the solution of Cauchy problem \eqref{vr} and \eqref{vr0}
 with $R$ replaced by $R_1$.
For any $y\in\bar{\Omega\backslash B(\bm\theta,L+R_3-R_2)}\cap\{x\in\R^N:x_1\geq-A\}$,
there exist $k$ points $x^1,\cdots,x^k\in\R^N$ such that
$$
\bc
B(x^1,R_1)\subset \{x\in\R^N:x_1\geq\tilde A_1-c\hat T\},\\
B(x^1,R_3)\subset\Omega,&\ \ 1\leq i\leq k,\\
B(x^{i+1},R_1)\subset B(x^{i},R_2),&\ \ 1\leq i\leq k-1,\\
y\in B(x^{k},R_2).
\ec
$$

In view of \eqref{utt}, Lemma \ref{key} and the comparison principle, there holds
$$
 u(\tilde T+\hat T,\cdot)\geq  v_{x^1,R_1}^\dl(\tilde T,\cdot)\geq(1-\dl q_*)\cdot\bm1\ \ \text{ in }\ \bar{B(x^1,R_2)}.
$$
Since $B(x^2,R_1)\subset B(x^1,R_2)$, then $  u(\tilde T+\hat T,\cdot)\geq(1-\dl q_*)\cdot\bm1$ in $B(x^2,R_1)$.
By similar arguments, one has  $
 u(2\tilde T+\hat T,\cdot)\geq(1-\dl q_*)\cdot\bm1$
 in $\bar{B(x^2,R_2)}$. By induction, there holds
 $u(k\tilde T+\hat T,\cdot)\geq (1-\dl q_*)\cdot\bm1$
 in $\bar{B(x^k,R_2)}$.
 In particular,
 $$
 u(k\tilde T+\hat T,\cdot)\geq (1-\dl q_*)\cdot\bm1\ \ \text{ in }\
 \bar{\Omega\backslash B(\bm\theta,L+R_3-R_2)}\cap\{x\in\R^N:x_1\geq-A\}.
 $$
On the other hand, thanks to \eqref{comp}, there exists $T'>0$ large enough such that
$$
 u(T',\cdot)\geq (1-\dl q_*)\cdot\bm1\ \ \text{ in }\
 \bar{ B(\bm\theta,L+R_3-R_2)\backslash K}.
$$
Denote
$$
\bar T=\max(k\tilde T+\hat T,T')(\geq \hat T).
$$
Since $ u_t\gg\bm0$ by  Theorem \ref{t1}, there holds
$$
  u(\bar T,\cdot)\geq (1-\dl q_*)\cdot\bm1\ \ \text{ for } x\in\bar\Omega \text{ such that } x_1\geq-A.
$$

Let $w^-$ be the subsolution defined as in Lemma \ref{w-}.
For all  $x\in\bar\Omega$ such that $x_1\geq-A$, it follows from $\Phi\leq\bm1$, \eqref{2C.2} and \eqref{zeta} that
$$
 w^-(0,x)=\max\left(\Phi(x_1+c T^*)-\dl Q(x_1+c T^*)\zeta(x),\bm0\right)
\leq(1-\dl q_*)\cdot\bm1\leq u(\bar T,x).
$$
For all $x\in\bar\Omega$ such that $x_1\leq-A$, it follows from \eqref{2C.2}, \eqref{zeta} and \eqref{R} that
$$
w^-(0,x)
\leq\max\left(\dl q_*\cdot\bm1-\dl q_*\cdot\bm1 ,\bm0\right)=\bm0
\leq u(\bar T,x).
$$
As a result, there holds $ w^-(0,x)\leq  u(\bar T,x)$ for $x\in\bar\Omega$.
Together with $\Phi(+\i)=\bm1$, \eqref{2C.2}, \eqref{dd} and the comparison principle, there exists $K_1>0$ large enough such that
\begin{align*}
  u(t,x)\geq&w^-(t-\bar T,x)\\
 \geq&\max\left(\Phi(x_1+c(t-\bar T+T^*)-\rho\dl (1-e^{-\beta (t-\bar T)}))-\dl q^*\|\zeta\|_{L^\i(\bar\Omega)}\cdot\bm1 ,\bm0\right)
 \geq(1-\vp q_*)\cdot\bm1
\end{align*}
 for all $t\geq \bar T$ and $x\in\bar\Omega$ such that $x_1+c t\geq K_1$.

{\it Step 2: proof of \eqref{K2}.}
By $\Phi(+\i)=\bm1$ and \eqref{tt}, there exists $\tilde A_2>0$ large enough such that
\begin{align*}
 u(\hat T,x)\leq \Phi(x_1+c\hat T)+\frac{1}{2}\dl q_*\cdot\bm1\leq \dl q_*\cdot\bm1
\end{align*}
for all $x\in\bar\Omega$ such that $x_1+c\hat T\leq-\tilde A_2$.
Let $w^+$ be the supersolution defined as in Lemma \ref{w-}.
From \eqref{2C.2}  and \eqref{zeta}, one has
$$
w^+(0,x)=\min\left(\Phi(x_1+c T^*)+\dl Q(x_1+c T^*)\zeta(x),\bm1\right)
\geq
\dl q_*\cdot\bm1\geq u(\hat T,x)
$$
for all  $x\in\bar\Omega$ such that $x_1+c\hat T\leq-\tilde A_2$.
Since $\hat T$ defined as in \eqref{tt} is sufficiently negative, one can assume that
$-\tilde A_2-c\hat T\geq A$.
For all $x\in\bar\Omega$ such that $x_1+c\hat T\geq-\tilde A_2$,
there holds $x_1+c T^*\geq A+c T^*$, hence it follows from \eqref{2C.2}, \eqref{zeta} and \eqref{R} that
\begin{align*}
 w^+(0,x)\geq\min\e((1-\dl q_*)\cdot\bm1+\dl q_*\cdot\bm1,\bm1\r)=\bm1\geq  u(\hat T,x).
\end{align*}
As a result, there holds $ w^+(0,x)\geq u(\hat T,x)$ for $x\in\bar\Omega$.
In view of $\Phi(-\i)=\bm1$, \eqref{2C.2}, \eqref{dd} and the comparison principle,
there exists $K_2>0$ large enough such that
\begin{align*}
 u(t,x)\leq& w^+(t-\hat T,x)\\
 \leq&\min\left(\Phi(x_1+c(t-\hat T+T^*)-\rho\dl (1-e^{-\beta (t-\hat T)})) +\dl q^*\|\zeta\|_{L^\i(\bar\Omega)}\cdot\bm1,\bm1\right)
 \leq\vp q_*\cdot\bm1
\end{align*}
for all  $t\geq \hat T$ and
$x\in\bar\Omega$ such that $x_1+c t\leq- K_2$.
Then  \eqref{K2} follows since $\bar T>\hat T$. The proof is complete.
\end{pr}

\begin{lem}\label{up2}
Let $\bar T>0$ be the constant such that Lemma \ref{up1} holds. For any $\vp>0$, there exists $\bar R_\vp>0$ such that
$$
-\vp q_*\cdot\bm1\leq u(t,x)-\Phi(x_1+c t)\leq\vp q_*\cdot\bm1\
\text{ for all $t\geq\bar T$ and $x\in\bar{\Omega\backslash B(\bm\theta,\bar R_\vp)}$.}
$$
\end{lem}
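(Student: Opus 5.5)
The idea is to reduce everything to the region where the front actually sits. Fix $\vp>0$ and apply Lemma \ref{up1} with $\vp$ to get $K_1,K_2$ and $\bar T$. Where $x_1+ct\ge K_1$, both $u$ and $\Phi(x_1+ct)$ lie in $[(1-\vp q_*)\cdot\bm1,\bm1]$ once $K_1$ is also large enough that $\Phi\ge(1-\vp q_*)\cdot\bm1$ on $[K_1,+\i)$. Similarly, where $x_1+ct\le -K_2$, both lie in $[\bm0,\vp q_*\cdot\bm1]$. So the claimed estimate holds automatically outside the moving strip $S_t=\{x\in\bar\Omega:|x_1+ct|<\max(K_1,K_2)\}$, and it remains to control $u-\Phi$ inside $S_t$ far from the obstacle.

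Inside the strip I would argue by contradiction with a compactness/Liouville argument. Suppose there are $t_k\ge\bar T$ and $x_k\in S_{t_k}$ with $|x_k|\to+\i$ and $|u_{i}(t_k,x_k)-\Phi_{i}(x_{1,k}+ct_k)|>\vp q_*$ for some fixed $i$. The key information to carry along is the sandwich provided by Lemma \ref{w-}: for each small $\dl$, the proof of Lemma \ref{up1} gives times $\bar T,\hat T$ with $w^-(t-\bar T,x)\le u(t,x)\le w^+(t-\hat T,x)$. This means $\Phi(x_1+ct-\sigma_1)-O(\dl)\le u\le \Phi(x_1+ct+\sigma_2)+O(\dl)$ for $t\ge\bar T$, with bounded shifts $\sigma_1,\sigma_2$.

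Next I would set $u_k(t,x)=u(t+t_k,x+x_k)$. Since $|x_k|\to\i$ and $K$ is compact, the domains exhaust $\R^N$, although the time range may only be $t\ge \bar T-t_k$. Two cases arise.
\begin{itemize}
\item If $t_k\to+\i$, parabolic estimates give a limit $u^\i$, which is an entire solution in $\R\times\R^N$ trapped between two shifted planar fronts (up to $O(\dl)$ errors, which can be removed by letting $\dl\to0$ along a diagonal sequence). By the uniqueness and stability of the planar front (Liouville-type results for bistable monotone systems, as in \cite{SW18,wangzhicheng2012}), $u^\i$ must be a shifted front $\Phi(x_1+ct+s)$.
\item If $t_k$ stays bounded, then $x_k'$ must be large, and $u$ converges to the planar front far in the transverse direction.
\end{itemize}

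The hardest part is pinning the shift to $0$ rather than merely showing the limit is some shifted front. My plan is to use the time-$-\i$ information together with the boundedness of $K$. For $|x'|$ large, the obstacle's influence is exponentially small, so one can run the sub/supersolutions of Lemma \ref{w-} started at a very negative time $\hat T$ (where, by \eqref{1.5}, $u$ is $\dl$-close to $\Phi$ with zero shift), restricted to a half-space or cylinder away from $K$. The shift then stays within $\rho\dl$, where $\rho$ is uniform in $\dl$. To make this work, the comparison should be done on the domain $\{|x'|>R\}\times\R$ with lateral boundary data controlled: for instance, one can use the $\zeta$-modified front with extra terms decaying in $|x'|$, or a finite-speed-of-influence estimate showing that the effect of $K$ at distance $R$ by time $t$ is small relative to $e^{-\beta t}$. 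Letting $\dl\to0$ would then give $\sigma=0$ in the first case as well, contradicting the assumed $\vp q_*$ gap.
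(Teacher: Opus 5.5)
Your reduction to the moving strip via Lemma~\ref{up1} is exactly what the paper does. Inside the strip, however, there is a genuine gap.

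\textbf{The case of bounded $t_k$.} You only assert that $u$ ``converges to the planar front far in the transverse direction'', but that is precisely what must be proved. Your plan for fixing the shift, running Lemma~\ref{w-}-type barriers for $u$ itself on $\{|x'|>R\}$, would need the values of $u$ on the lateral boundary, which you do not control. The missing idea is to translate only in space and to do the comparison \emph{on the limit}.
\begin{itemize}
\item Take $x_n=(r,x_n')$ with $|x_n'|\to+\i$ and set $u_n(t,x)=u(t,x+x_n)$, with no time shift.
\item The obstacle disappears in the limit, so the limit $U$ is an entire solution on $\R\times\R^N$. Moreover \eqref{1.5} passes to the limit as \eqref{Up1}: $U(t,\cdot)-\Phi(x_1+ct+r)\to\bm0$ uniformly as $t\to-\i$.
\item Use the whole-space barriers $\Phi(\xi^\pm)\pm\vp Q(\xi^\pm)e^{-w(t-T_2)}$ with $\xi^\pm=x_1+ct+r\pm\vp\rho w^{-1}(1-e^{-w(t-T_2)})$. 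They need neither $\zeta$ nor a boundary condition and can be started at any $T_2\le T_1$.
\item Letting $T_2\to-\i$ gives $\Phi(x_1+ct+r-\vp\rho/w)\le U\le\Phi(x_1+ct+r+\vp\rho/w)$. Then $\vp\to0$ gives $U\equiv\Phi(x_1+ct+r)$.
\end{itemize}
Since the limit is unique, the whole sequence converges. A covering or contradiction argument at a fixed time (or for $t_k\to t^*$) then handles the strip. No Liouville theorem for solutions trapped between two fronts is needed, and no estimate on the influence of $K$ is needed either.

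\textbf{The case $t_k\to+\i$.} As proposed, this does not work.
\begin{itemize}
\item The time shift destroys the information at $t=-\i$. The proof of Lemma~\ref{up1} only traps the limit between $\Phi(x_1+ct+s+c(T^*-\bar T)-\rho\dl)$ and $\Phi(x_1+ct+s+c(T^*-\hat T)+\rho\dl)$, where $s=\lim_k(x_{1,k}+ct_k)$. These shifts involve $c\bar T$ and $c\hat T$ and are in no sense small.
\item So even granting a Liouville theorem of that type for monotone systems (it is not proved in this paper), the shift stays undetermined.
\item Neither of your remedies helps. Parabolic problems have no finite speed of propagation. More importantly, the disturbance created at $K$ spreads diffusively in $x'$, and its effect on the front decays in general only algebraically in time. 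This is why Lemma~\ref{ups} needs the correction $\beta t^{-\alpha}e^{-|x'|^2/(\gamma t)}$. Over unbounded time intervals it cannot be dominated by corrections of size $e^{-\beta t}$.
\end{itemize}
Note also that the paper's argument is carried out at a fixed $t_*\ge\bar T$. The radius it produces depends on $t_*$; by the same compactness it is uniform on bounded time intervals. Only this fixed-time form is used afterwards, at $t=t_\lambda$ in \eqref{rl}. Uniformity over all $t\ge\bar T$ is most easily recovered a posteriori from \eqref{l+}. So you should restrict to bounded time ranges rather than attack $t_k\to+\i$ directly.
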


\begin{pr}
Fix any $r\in\R$. Pick a sequence of points $(x_n)_{n\in\mathbb N}=((x_{1,n},x'_n))_{n\in\mathbb N}\subset\R\times\R^{N-1}$ such that
$x_{1,n}=r$ for all $n\in\mathbb N$
 and $|x_n|\to+\i$ as $n\to+\i$.
Denote
$$
 u_n(t,x)= u(t,x+x_n)=u(t,x_1+x_{1,n},x'+x_n')\ \ \text{ for all }t\in\R
 \text{ and }  x\in\Omega_n=\Omega-x_n,
$$
where $x'=(x_2,\cdots,x_N)$.
Since the obstacle $K$ is compact,  the sets $\Omega_n\to\R^N$ as $n\to+\i$.
By $\bm0\ll  u(t,x)\ll\bm1$ and standard parabolic estimates, up to extraction of a subsequence,
the functions $ u_n(t,x)\to U(t,x)$ locally uniformly in $(t,x)\in\R\times\R^N$ as $n\to+\i$,
where $U=(U_1,\cdots,U_m)$ is a classical solution of
\begin{align}\label{UU}
 U_t(t,x)-D\Delta  U(t,x)- F( U(t,x))=\bm0, \ \ (t,x)\in\R\times\R^N.
\end{align}
It is evident that $\bm0\leq U\leq\bm1 $.
By \eqref{1.5}, there holds
\begin{align}\label{Up1}
U(t,x)\to\Phi(x_1+c t+r)\ \text{ uniformly in }x\in\mathbb R^N\text{ as }t\to-\i.
\end{align}
We claim that
\begin{align}\label{Up}
 U(t,x)=\Phi(x_1+c t+r)\ \ \text{ for all }t\in\R \text{ and }x\in\R^N.
\end{align}
To ensure the coherence of the argument, we postpone the proof of this claim at the end of the proof of Lemma \ref{up2}.

From \eqref{Up}, one has
\begin{align}\label{unp}
u(t,x+x_n)\to \Phi(x_1+c t+r)\ \ \text{ locally uniformly in }(t,x)\in\R\times\R^N \text{ as } n\to+\i.
\end{align}
Fix any $t_*\geq\bar T$ and any $\vp>0$.
Choose a sequence of points $(y_n)_{n\in\mathbb N}=((y_{1,n},y_n'))_{n\in\mathbb N}\subset \R\times\R^{N-1}$ such that
$y_{1,n}=-c t_*$ for all $n\in\mathbb N$  and $ |y_n|\to+\i$  as $ n\to+\i$.
 Since $ \Phi(-\i)=\bm0$ and $ \Phi(+\i)=\bm1$, one can pick two positive constants $K_1$ and $K_2$ (depend on $\vp$) such that
 \begin{align}\label{fz}
 \bc
  \Phi(x_1+c t)\geq(1-\vp q_*)\cdot\bm1&\text{ for } t\geq\bar T \text{ and }x\in\bar\Omega
 \text{ such that }  x_1+c t\geq K_1,\\
  \Phi(x_1+c t)\leq\vp q_*\cdot\bm1& \text{ for }t\geq\bar T \text{ and }x\in\bar\Omega
  \text{ such that }  x_1+c t\leq -K_2.
 \ec
 \end{align}
Then there exists $R_\vp>0$ large enough such that
$$
\left\{x\in\bar\Omega:-K_2\leq x_1+c t_*\leq K_1\right\}\subset \bigcup_{n=1}^\i B(y_n,R_\vp).
$$
Even if it means increasing $ R_\vp$, one can take $K_1$ and $K_2$ sufficiently large.
It follows from \eqref{unp} that there exists $n_0\in\mathbb N$ large enough such that
$$
-\vp q_*\cdot\bm1\leq u(t_*,x+y_n)-\Phi(x_1)\leq\vp q_*\cdot\bm1
\ \
\text{ for }n\geq n_0\ \text{ and }x\in \bar{B(\bm\theta, R_\vp)}.
$$
Hence, there exists a large constant $\bar R_\vp>0$ such that
$
-\vp q_*\cdot\bm1\leq  u(t_*,x)-  \Phi(x_1+c t_*)\leq\vp q_*\cdot\bm1
$
 for all $x\in\bar\Omega$ such that $-K_2\leq x_1+c t_*\leq K_1$  and $|x|\geq\bar R_\vp$.

Since $t_*\geq\bar T$ is arbitrary,  then
$
-\vp q_*\cdot\bm1\leq  u(t,x)-  \Phi(x_1+c t)\leq\vp q_*\cdot\bm1
$
 for $t\geq\bar T$ and $x\in\bar\Omega$ such that $-K_2\leq x_1+c t\leq K_1$  and $|x|\geq\bar R_\vp$.

 Since $\bm0\ll u\ll\bm1$, by increasing $K_1$ and $K_2$ if necessary, one infers from \eqref{fz} and Lemma \ref{up1} that
 $
-\vp q_*\cdot\bm1\leq  u(t,x)- \Phi(x_1+c t)\leq\vp q_*\cdot\bm1
$
 for $t\geq\bar T$ and $x\in\bar\Omega$ such that $ x_1+c t\leq -K_2$ or $ x_1+c t\geq K_1$. As a conclusion,
The statement of Lemma \ref{Up} is thereby true.

\textit{Proof of claim \eqref{Up}.}
Choose any
\begin{align}\label{sw}
0<\sigma<{2\vp_0}\ \text{ and }\ 0<w<\varpi.
\end{align}
Since $\Phi(-\i)=\bm0$ and $\Phi(+\i)=\bm1$, there exists $C>1$   such that
\begin{align}\label{cs}
\Phi(\xi)\leq \sigma \cdot\bm1\ \text{ for }\xi\leq-C \ \text{ and } \
\Phi(\xi)\geq (1-\sigma )\cdot\bm1 \ \text{ for }\xi\geq C.
\end{align}
Fix any $\vp\in(0,\sigma/ q_*)$. By \eqref{Up1},  there exists $T_1<0$ negative enough such that
 \begin{align}\label{Up2}
-\vp q_*\cdot\bm1\leq U(t,x)-\Phi(x_1+c t+r)\leq\vp q_*\cdot\bm1\ \ \text{ for all }t\leq T_1\text{ and }x\in\R^N.
\end{align}
Furthermore, there exists $\kappa>0$ such that
 \begin{align}\label{ka}
\kappa=\min_{i=1,\cdots, m}\left(\inf_{\xi\in[-C,C]}\P_i'(\xi)\right).
\end{align}
Fix any $T_2<T_1$. Let
\begin{align}\label{ro}
\rho=\kappa^{-1}(w q^* +\Lambda q^*
+c M
+\bar DM).
\end{align}

 For all $t\geq T_2$ and $x\in\R^N$, define the functions $ U^\pm=(U^\pm_1,\cdots,U^\pm_m)$ as
$$
U^-(t,x)=\max\e( \Phi(\xi^-(t,x))-\vp Q\e(\xi^-(t,x)\r)e^{-w (t-T_2)},\bm0\r)
$$
and
$$
U^+(t,x)=\min\e( \Phi(\xi^+(t,x))+\vp Q\e( \xi^+(t,x)\r)e^{-w (t-T_2)},\bm1\r),
$$
where
$\xi^\pm(t,x)=x_1+c t+r\pm{\vp\rho}{w^{-1}}\e(1-e^{-w (t-T_2)}\r)$.
We shall prove that the function $U^-$ is a subsolution of \eqref{UU} in $[T_2,+\i)\times\R^N$,
a supersolution $ U^+$ can be obtained by  similar arguments.
Let $i=1\cdots,m$ be any fixed index. It suffices to verify that
$$
\mathscr L_i[ U^-](t,x)=(U_i^-)_t(t,x)-D_i\Delta U_i^-(t,x)-F_i(U^-(t,x))\leq0
$$
for all $t\geq T_2$ and $x\in\R^N$ such that $U_i^-(t,x)>0$.
By  direct calculations, there  holds
\begin{align*}
\mathscr L_i[ U^-](t,x)
=&-\vp \rho e^{-w( t-T_2)}\Phi_i'(\xi^-(t,x))+\vp w  e^{-w( t-T_2)}q_i(\xi^-(t,x))\\
&-\e(c-\vp \rho e^{-w( t-T_2)}\r)\vp e^{-w( t-T_2)}q_i'(\xi^-(t,x))
+D_i\vp  e^{-w( t-T_2)}q_i''(\xi^-(t,x))\\
&+F_i( \Phi(\xi^-(t,x)))-F_i( U^-(t,x)).
\end{align*}

If $\xi^-(t,x)\leq-C$, then  $ Q(\xi^-(t,x))= Q_0$ by \eqref{2C.a}.
Thanks to \eqref{sw} and \eqref{cs}, one has
$
\bm0\leq U^-(t,x)\leq \Phi(\xi^-(t,x))\leq \sigma \cdot\bm1\leq 4\vp_0\cdot\bm1
$.
It then follows from \eqref{2C.3} that
\begin{align*}
F_i(\Phi(\xi^-(t,x)))-F_i(U^-(t,x))
\leq\sum_{j=1}^m\mu_{ij}^0\vp q_j^0e^{-w(t-T_2)}
\leq-\vp\varpi q_i^0e^{-w(t-T_2)}.
\end{align*}
Hence, one derives from $\Phi_i'>0$ and \eqref{sw} that
\begin{align*}
\mathscr L_i[  U^-](t,x)
\leq&\vp  e^{-w( t-T_2)}q_i^0\left(w -\varpi\right)
\leq0.
\end{align*}

If $\xi^-(t,x)\geq C$, then $ Q(\xi^-(t,x))= Q_1$ by \eqref{2C.a}.
Similarly,
$\mathscr L_i[ U^-](t,x)\leq0$.

We now consider the case $-C\leq\xi^-(t,x)\leq C$. Since
$\bm0\leq U^-(t,x)\leq \Phi(\xi^-(t,x))\leq \bm1$, it follows from \eqref{2C.2} and \eqref{2C.7} that
\begin{align*}
F_i(\Phi(\xi^-(t,x)))-F_i( U^-(t,x))
\leq\vp\Lambda q^*e^{-w(t-T_2)}.
\end{align*}
By \eqref{2C.2},  \eqref{2C.b}, \eqref{2l.a}, \eqref{ka} and \eqref{ro}, there holds
\begin{align*}
\mathscr L_i[ U^-](t,x)
\leq\vp e^{-w( t-T_2)}\e(-\rho \kappa+ w q^* +\Lambda q^*
+c M
+\bar DM\r)
\leq0.
\end{align*}

As a result, the function $U^-$ is a subsolution of \eqref{UU} in $[T_2,+\i)\times\R^N$.
From \eqref{2C.2} and \eqref{Up2}, one has
$U^-(T_2,x)\leq U(T_2,x)\leq U^+(T_2,x)$ for $x\in\R^N$.
The comparison principle leads to $ U^-(t,x)\leq  U(t,x)\leq U^+(t,x)$ for $t\geq T_2$ and $x\in\R^N$.
By passing $T_2\to-\i$, one has
$$
 \Phi\e(x_1+c t+r-{\vp\rho}{w^{-1}}\r)\leq  U(t,x)\leq \Phi\e(x_1+c t+r+{\vp\rho}{w^{-1}}\r)
$$
for all $t\in\R$ and $x\in\R^N$. By the arbitrariness of $\vp$,  one concludes that the claim \eqref{Up} holds.
The proof of Lemma \ref{up2} is thereby complete.
\end{pr}

\subsection{Proof of Theorem \ref{t2}}
In this subsolution, we complete the proof of Theorem \ref{t2}. The strategy is to prove that the entire solution
$u(t,x)$ obtained in Theorem \ref{t1} is uniformly bounded from below and from above respectively, as close as we want,
by the planar front $\Phi(x_1+c t)$  for large time. The main idea is to construct a pair of suitable sub- and supersolutions.

We first introduce some notations and parameters.
Since $\Phi(-\i)=\bm0$ and $\Phi(+\i)=\bm1$, there exists $C>2$   such that
\begin{align}\label{C}
\Phi(\xi)\leq\vp_0\cdot\bm1\ \text{ for }\xi\leq -C
\ \ \text{ and }\ \
\Phi(\xi)\geq( 1-\vp_0)\cdot\bm1\ \text{ for }\xi\geq C.
\end{align}
Furthermore, there exists $\tilde\kappa>0$ such that
\begin{align}\label{tkp}
\tilde \kappa:=\min_{i=1,\cdots,m}\left(\inf_{\xi\in[-C,C]}\Phi_i'(\xi)\right).
\end{align}
Even if it means increasing $\bar K_1$ given by \eqref{impo}, we can assume that
\begin{align}\label{bk1}
\bar K_1\geq\max&\left(1,\frac{1}{\tilde \kappa},
\frac{(N-1)L^2}{4c\bar D},\frac{(N-1)L^2}{4\bar D}, \frac{N-1}{8},\frac{N-1}{2c},\right.\nonumber\\
&\left.\tilde\kappa^{-1}
 \left(\max_{i=1,\cdots,m}\|\Phi_i'\|_{L^\i(\R)}+\max_{i=1,\cdots,m}\|\Phi_i''\|_{L^\i(\R)}+M
\right)\right).
\end{align}
By the properties of $\Phi$ and \eqref{1.a}, it follows that there exists $0<\kappa_0<1-\max_{i=1,\cdots,m}\Phi_i(2)$ and $\bar K_2>0$ such that, for all $0<\kappa\leq\kappa_0$,
\begin{align}\label{bk22}
(\Phi(\xi)\geq(1-\kappa)\cdot\bm1)\Rightarrow(\Phi'(\xi)\leq \bar K_2\kappa^{1/2} e^{-b\xi/2}\cdot\bm1).
\end{align}
Set
\begin{align}\label{ome}
\omega=\min\left(\frac{b c}{4},\frac{\varpi q_*  }{16 q^*},\frac{1}{2}\right).
\end{align}

Define
\begin{align}\label{sss}
  \sigma=\min\left(1,\frac{\varpi }{8\bar D  \|\Delta\zeta\|_{L^\i(\bar\Omega)}},\frac{\varpi  }{16 \| \zeta\|_{L^\i(\bar\Omega)}},
  \frac{\varpi }{2\Lambda\|\zeta\|_{L^\i(\bar\Omega)}}\right).
\end{align}
Set
\begin{align}\label{rr}
\rho=\left(2\Lambda q^*+2\sigma\omega q^*\right)\|\zeta\|_{L^\i(\bar\Omega)}+\bar DM+cM
+2q^*\omega +\sigma q^*\bar D\|\Delta\zeta\|_{L^\i(\bar\Omega)}.
\end{align}
Denote
\begin{align}\label{bk5}
\bc
\bar K_3=\bar K_2 \sigma^{-1} q_*^{-1}e^{b(L+1)}\left(1+\frac{(N-1)L}{4\bar D}\right),\\
\bar K_4=(2+\omega)\omega\tilde\kappa(12+5\omega)^{-1}\rho^{-1}\left(\max_{i=1,\cdots,m}\|\Phi_i'\|_{L^\i(\R)}\right)^{-1},\\
\bar K_5=2\omega\tilde\kappa\rho^{-1}\left(\max_{i=1,\cdots,m}\|\Phi_i'\|_{L^\i(\R)}\right)^{-1}.
\ec
\end{align}
It is evident that $\bar K_4\leq\bar K_5$.
Fix an arbitrary real number $\vp$ satisfying
\begin{align}\label{vpp}
0<\vp<\min\left(\min_{i=1,\cdots,m}\|\Phi_i'\|_{L^\i(\R)},\frac{1}{\bar K_4},
\frac{\bar K_3\kappa_0^{1/2}}{\bar K_4},\frac{\bar K_3^2}{\bar K_4},
\frac{\tilde\kappa}{2M\bar K_4},
\frac{\vp_0}{\bar K_4q^*\|\zeta\|_{L^\i(\bar\Omega)}} \right).
\end{align}
Denote
\begin{align}\label{muuu}
\mu=\min\left(1,\bar K_4\vp,
\bar K_5\vp,\bar K_3\kappa_0^{1/2},
\bar K_3^2 ,\frac{\tilde\kappa}{2M},\frac{\vp_0}{q^*\|\zeta\|_{L^\i(\bar\Omega)}}
\right)=\bar K_4\vp\leq\frac{\vp_0}{q^*}
\end{align}
and
\begin{align}\label{lamm}
\lambda&=\min\left(\kappa_0,\mu  ,4\mu^2\bar K_3^{-2}(2+\omega)^{-2}\right)
=4\mu^2\bar K_3^{-2}(2+\omega)^{-2}=4\bar K_3^{-2}(2+\omega)^{-2}\bar K_4^2\vp^2.
\end{align}

By $u_t\gg\bm0$ and  \eqref{comp}, there exists $t_{\lambda}> \max(\bar T,(L+1)/c)$ such that
\begin{align}\label{tl}
  u(t_\lambda-1+t,x)\geq u(t_\lambda,x)\geq (1-\lambda)\cdot\bm1\  \text{ for all }t\geq 1\text{ and }x\in\p \Omega,
\end{align}
where $\bar T>0$ is the constant such that Lemma \ref{up1} holds.
By Lemma \ref{up2}, there is $\bar R_{\lambda}>0$
such that
\begin{align}\label{rl}
-\lambda q_*\cdot\bm1\leq  u(t_{\lambda},x)-\Phi(x_1+c t_{\lambda})\leq  \lambda q_*\cdot\bm1
\ \text{ for }x\in \bar{\Omega\backslash B(\bm\theta,\bar R_{\lambda})}.
\end{align}
By Theorem \ref{t1},  $\bm0\ll u \ll \bm1$ in $\R\times\bar\Omega$.
Then $\min_{\bar{\Omega}\cap\bar{ B(\bm\theta,\bar R_{\lambda})}} u(t_\lambda,\cdot)\gg\bm0$ and $\max_{\bar{\Omega}\cap\bar{ B(\bm\theta,\bar R_{\lambda})}} u(t_\lambda,x)\ll\bm1$.
Furthermore, by $\Phi(-\i)=\bm0$ and $\Phi(+\i)=\bm1$, there exists $\beta>0$ such that
\begin{align}\label{ul2}
\Phi(x_1+c t_\lambda-\beta e^{-|x'|^2})\leq u(t_\lambda,x)\leq\Phi(x_1+c t_\lambda+\beta e^{-|x'|^2})
\end{align}
 for all $x=(x_1,x')\in \bar{\Omega}\cap\bar{ B(\bm\theta,\tilde R_{\lambda})}$,
 where $x'=(x_2,\cdots,x_N)$ and $|x'|^2=x_1^2+\cdots+x_N^2$.
Since $\Phi$ is increasing, $\beta$ can be chosen large enough such that
\begin{align}\label{bt1}
\bc
\beta\geq\max\left(1,\frac{\gamma_0}{8\bar K_1 \bar D}, \frac{N-1}{4},\frac{1}{8\bar D}\right),\\
c(t_{\lambda}+2+2\omega^{-1})-\beta(3+2\omega^{-1})^{-1}e^{-\frac{(N-1)L^2}{3+2\omega^{-1}}}+L<0,\\
a\left(1+\frac{(N-1)L}{4\bar D}\right)e^{-b(-L+c t_{\lambda}+\beta e^{-(N-1)L^2})}
\leq\sigma\mu q_*,
\ec\end{align}
where $\gamma_0>0$  such that
\begin{align}\label{gm}
e^{-{s^{-1}(N-1)L^2}}\geq1-{2s^{-1}(N-1)L^2}\ \text{ for all }s\geq\gamma_0.
\end{align}
Define
\begin{align}\label{ag}
  \bc
  \alpha=\frac{\underline D(N-1)}{8\bar K_1\beta\bar D}\leq\frac{N-1}{4\bar K_1\beta},\\
  \gamma=\max\left(\gamma_0,\gamma_1,2\beta (N-1)L^2c^{-1},2\beta (N-1)L^2,8\bar K_1\beta\bar D\right)=8\bar K_1\beta\bar D\geq 8\beta\bar D
  \geq4\bar D,
  \ec
\end{align}
where
$\gamma_1=8 \beta\bar D\tilde\kappa^{-1}\left(\max_{i=1,\cdots,m}\|\Phi_i'\|_{L^\i(\R)}+\max_{i=1,\cdots,m}\|\Phi_i''\|_{L^\i(\R)}+M\right)$.
From \eqref{bk1} and \eqref{bt1}, one has $0<\alpha<1$ and $\gamma>1$.

For all $t\geq1$, we define  the function
\begin{align}\label{g}
  g(t)=L +c(t-1+t_{\lambda})-\beta t^{-\alpha}e^{-\frac{(N-1)L^2}{\gamma t}}.
\end{align}
From \eqref{ag}, one obtains that for all $t\geq1$,
\begin{align}\label{gp}
  g'(t)\geq
  c-\beta (N-1)L^2\gamma^{-1}t^{-\alpha-2}e^{-\frac{(N-1)L^2}{\gamma t}}
  \geq c-\beta(N-1)L^2\gamma^{-1}\geq\frac{c}{2}.
\end{align}
Since $\gamma>1$, $0<\alpha<1$, $\beta>0$,  $3+2\omega^{-1}\geq1$,
one gets from \eqref{bt1} that
$$
g(3+2\omega^{-1})
\leq c(t_{\lambda}+2+2\omega^{-1})
-\beta(3+2\omega^{-1})^{-1} e^{-\frac{(N-1)L^2}{3+2\omega^{-1}}}+L<0.
$$
On the other hand, since $g$ is continuous in $[1,+\i)$, there exists a unique constant $t_1$ such that
\begin{align}\label{gt1}
t_1>3+2\omega^{-1}\ \text{ and }\ g(t_1)=0.
\end{align}
Define
\begin{align*}
t_2=t_1-2-
\frac{2\omega^{-1}\bar K_3 \lambda^{1/2}}
{\bar K_3\lambda^{1/2}+\mu e^{2\omega+2-\omega(t_1-1)}}
\in(t_1-2-2\omega^{-1},t_1-2)
\end{align*}
and
\begin{align*}
\bc
\sigma_1=\omega^2(\bar K_3\lambda^{1/2}+\mu e^{2\omega+2-\omega(t_1-1)})>0,\\
\sigma_2=\bar K_3\lambda^{1/2}\mu e^{2\omega+2-\omega(t_1-1)}>0.
\ec
\end{align*}

Let $v$ be a function  defined by
\begin{align*}
v(t)=
\bc
\mu e^{-\omega(t-1)}
&\text{ for }t\in[1,t_1-2-2\omega^{-1}],\\
\bar K_3\lambda^{1/2}
+\frac{\sigma_1}{4}[(t-t_2)^2-(t_1-2-t_2)^2]
&\text{ for }t\in[t_1-2-2\omega^{-1},t_1-2],\\
\bar K_3\lambda^{1/2}+
\frac{\bar K_3\omega \lambda ^{1/2}}{2}
 [1-(t-t_1+1)^2 ]
&\text{ for }t\in[t_1-2,t_1],\\
\bar K_3\lambda^{1/2}e^{-\omega(t-t_1)}
&\text{ for }t\geq t_1.
\ec
\end{align*}
It is straightforward to check that the function $v$ is of class $C^1$ on $[1,+\i]$, and that
it is decreasing on $[1,t_2]\cup[t_1-1,+\i)$ and increasing on $[t_2,t_1-1]$.
Since $v>0$ on $[t_1,+\i]$  and
$$
\min_{t\in[1,t_1]}v(t)=v(t_2)=\bar K_3\lambda^{1/2}-\frac{\sigma_1}{4}(t_1-2-t_2)^2=\omega^2\sigma_2\sigma_1^{-1}>0.
$$
Therefore,  $v>0$ on $[1,+\i)$.
By \eqref{lamm}, there holds
 $v(t_1-1)=\bar K_3\lambda^{1/2}(1+\omega/2)=\mu=v(1)$.
 Hence
\begin{align}\label{mv}
\max_{t\geq1}v(t)=\mu.
\end{align}
By the same arguments as in \cite[Lemma 7.2]{BHM2009}, one can prove that
\begin{align}\label{vo}
-v'(t)\leq2\omega v(t)\ \text{ for all }t\geq1.
\end{align}

Let $V$ be the function defined by
\begin{align}\label{V}
V(t)=\rho\tilde\kappa^{-1}\int_t^{+\i}v(\tau)d\tau.
\end{align}
Then $V$ is decreasing in $[1,+\i)$ and $V(+\i)=0$.
By the definition of $v$, \eqref{ome}, \eqref{bk5}, \eqref{vpp}, \eqref{muuu}, \eqref{lamm} and \eqref{mv}, one has
\begin{align}\label{V1}
V(1)=
&\rho \tilde\kappa^{-1}
\left(\int_1^{t_1-2-2\omega^{-1}}v(\tau)d\tau
+\int_{t_1-2-2\omega^{-1}}^{t_1}v(\tau)d\tau
+\int_{t_1}^{+\i}v(\tau)d\tau
\right)\nonumber\\
\leq
&\rho \tilde\kappa^{-1}
\left(\mu\omega^{-1}
+(2+2\omega^{-1})\mu
+\bar K_3\lambda^{1/2}\omega^{-1}
\right)\nonumber\\
\leq&\rho\tilde\kappa^{-1}\omega^{-1}
\left(5\mu
+\bar K_3\lambda^{1/2}
\right)\nonumber\\
\leq&\rho\tilde\kappa^{-1}\omega^{-1}
\left(5\bar K_4+2(2+\omega)^{-1}\bar K_4
\right)\vp\nonumber\\
=&\left(\max_{i=1,\cdots,m}\|\Phi_i'\|_{L^\i(\R)}\right)^{-1}\vp\ (\leq1)
.
\end{align}

For all $t\geq1$, we define the functions
\begin{align*}
\tilde v(t)=\mu e^{-2\omega(t-1)}
\ \text{ and }\
 \tilde V(t)=\frac{1}{2}\rho\tilde\kappa^{-1}\omega^{-1}\mu e^{-2\omega(t-1)} .
\end{align*}
By direct calculations, there holds
\begin{align}\label{fs0}
 \tilde v'(t)=-2\omega v(t)\ \text{ for all }t\geq1.
\end{align}
By \eqref{bk5}, \eqref{vpp} and \eqref{muuu}, one gets that
\begin{align}\label{fs1}
0>\tilde V'(t)=-\rho\tilde\kappa^{-1}\mu e^{-2\omega(t-1)}
=-\rho\tilde\kappa^{-1}\tilde v(t)\ \text{ for all }t\geq1
\end{align}
and
\begin{align}\label{fs2}
\tilde V(t)\leq\frac{1}{2}\rho\tilde\kappa^{-1}\omega^{-1}\bar K_5\vp e^{-2\omega(t-1)} =
\left(\max_{i=1,\cdots,m}\|\Phi_i'\|_{L^\i(\R)}\right)^{-1}\vp e^{-2\omega(t-1)}\leq1\ \text{ for all }t\geq1.
\end{align}

\begin{lem}\label{ups}
The function $\underline u=(\underline u_1,\cdots,\underline u_m)$ defined by
$$
\underline u(t,x)=\max\left(\Phi(\xi(t,x))-v(t)Q(\chi(t,x))-\sigma v(t)q_*\zeta(x)\cdot\bm1,\bm0\right)
$$
is a subsolution of the equation satisfied by $ u(t_{\lambda}-1+t,x)$ for $t\geq1$ and $x\in\bar\Omega $,
where
$\xi(t,x)=x_1+c(t-1+t_{\lambda})-\beta t^{-\alpha}e^{-\frac{|x'|^2}{\gamma t}}+V(t)-V(1)$ and
$\chi(t,x)=\xi(t,x)-V(t)$.
\end{lem}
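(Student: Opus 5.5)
The argument mirrors \cite[Lemma 7.3]{BHM2009}, adapted with the Perron--Frobenius vectors $Q$ and the boundary corrector $\zeta$ as in Lemmas \ref{l2.2} and \ref{w-}. Three things must be checked.

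\textbf{Initial condition at $t=1$.} Since $V(1)-V(1)=0$, we have $\xi(1,x)=x_1+ct_\lambda-\beta e^{-|x'|^2/\gamma}$. For $x\in\bar\Omega\cap\bar{B(\bm\theta,\bar R_\lambda)}$ we use \eqref{ul2}. Because $\gamma\ge 1$ and $\Phi$ is increasing, $\Phi(\xi(1,x))\le\Phi(x_1+ct_\lambda-\beta e^{-|x'|^2})\le u(t_\lambda,x)$. Outside the ball we use \eqref{rl}: there $u(t_\lambda,x)\ge\Phi(x_1+ct_\lambda)-\lambda q_*\bm1$, and we subtract $v(1)Q+\sigma v(1)q_*\zeta\bm1\ge \mu q_*\bm1\ge\lambda q_*\bm1$, using $\lambda\le\mu$ from \eqref{lamm}. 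In both regions this gives $\underline u(1,\cdot)\le u(t_\lambda,\cdot)$.

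\textbf{Boundary condition.} On $\partial\Omega$ we have $|x|\le L$. Let $g$ be as in \eqref{g}. For $t\le t_1$ we have $\xi\le g(t)+V(t)-V(1)\le g(t)\le 0$, so either $\underline u=\bm0$ there or we are in the left region. For $t\ge t_1$ we have $\xi\ge 0$, and for large $t$ we have $\xi\ge C$.

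In the left region $Q(\chi)=Q_0$ is constant near the boundary only when $\chi\le 0$. The normal derivative is
\[
\nu\cdot\nabla\underline u_i=\Phi_i'(\xi)\,\nu\cdot\nabla\xi-v\,q_i'(\chi)\,\nu\cdot\nabla\chi-\sigma v q_*,
\]
where $|\nabla\xi|\le 1+\frac{2\beta L}{\gamma t}t^{-\alpha}\le 1+\frac{(N-1)L}{4\bar D}$, using $\gamma\ge 8\beta\bar D$.

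For $t\ge t_1$ we use \eqref{tl}: $u\ge(1-\lambda)\bm1$ on $\partial\Omega$. Via \eqref{bk22}, wherever $\Phi(\xi)\ge(1-\lambda)\bm1$ we get $\Phi'(\xi)\le\bar K_2\lambda^{1/2}e^{-b\xi/2}$, and $\bar K_3\lambda^{1/2}$ was designed exactly to absorb this, up to $\sigma q_*$. This is why $v(t)\ge\bar K_3\lambda^{1/2}e^{-\omega(t-t_1)}$ after $t_1$ and why $\omega\le bc/4$. Actually, where $\Phi(\xi)-\dots$ exceeds $u$ there is nothing to check beyond the max structure; the needed inequality is $\nu\cdot\nabla\underline u_i\le 0$ wherever $\underline u_i>0$. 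For $t\le t_1$ we bound $\Phi'$ by \eqref{1.a} together with the third line of \eqref{bt1}, giving $\Phi_i'|\nabla\xi|\le\sigma\mu q_*e^{-b(\cdot)}\le\sigma v q_*$, and on the bump interval we use $v\ge\omega^2\sigma_2/\sigma_1$. If $Q$ is not frozen we take $\chi$-derivatives into account; these vanish when $\chi\le 0$, which holds for $t\le t_1$.

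\textbf{Interior inequality.} We compute
\[
\mathscr L_i[\underline u]=F_i(\Phi)-F_i(\underline u)+\Phi_i'(\xi)\bigl(\xi_t-c\bigr)-D_i\Phi_i''|\nabla\xi|^2+D_i\Phi_i''-D_i\Phi_i'\Delta_{x'}\xi-v'(q_i+\sigma q_*\zeta)+\text{($Q$, $\zeta$ derivative terms)}.
\]
The geometric terms have the form $\beta t^{-\alpha}e^{-|x'|^2/(\gamma t)}\,\Phi_i'\,[\alpha t^{-1}-|x'|^2/(\gamma t^2)]$ plus diffusion terms $\Phi_i'\Delta_{x'}(\beta t^{-\alpha}e^{\cdots})$ and $\Phi_i''|\nabla_{x'}\cdot|^2$. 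Using \eqref{impo} to bound $|\Phi''|\le\bar K_1\Phi'$ and the choices of $\alpha,\gamma$ in \eqref{ag}, the standard BHM computation makes the sum of these $\le 0$: the $(N-1)$-term from the Laplacian is dominated by $\alpha/t$ times a factor $1/2$, and $\gamma\ge 8\bar K_1\beta\bar D$ controls the gradient-squared term.

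We then split into three cases. When $\xi\le -C$ or $\xi\ge C$, we use \eqref{2C.3} with $v\le\mu\le\vp_0/q^*$ to get $F_i(\Phi)-F_i(\underline u)\le-\varpi v(q_i+\sigma q_*\zeta)$ approximately. This beats $-v'(\cdot)\le 2\omega v(\cdot)$ from \eqref{vo} because $\omega\le\varpi q_*/(16q^*)$, and it beats the $\zeta$-terms because of the choice of $\sigma$ in \eqref{sss}. When $|\xi|\le C$, we use $\Phi'\ge\tilde\kappa$ and $V'=-\rho\tilde\kappa^{-1}v$, which gives the term $-\rho v$ dominating everything else by \eqref{rr}.

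\textbf{Main obstacle.} The hardest step is the boundary condition in the window $t\in[t_1-2-2\omega^{-1},t_1]$, where the front crosses the obstacle. There the increasing bump of $v$ must be large enough to give a negative normal derivative, yet small enough that $-v'\le 2\omega v$ still holds. I would follow \cite[Lemma 7.2--7.3]{BHM2009} line by line, using $\tilde v,\tilde V$ as comparison when needed.
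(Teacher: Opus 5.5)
Your initial-condition check and your three-case interior estimate follow the paper. At boundary points where $\Phi(\xi)\ge(1-\lambda)\bm1$ you also have the right estimate, via \eqref{bk22} and $\bar K_3$. The gap is in how you organize the boundary condition. After mentioning \eqref{tl} you set it aside and demand $\nu\cdot\nabla\underline u_i\le0$ at every point of $\partial\Omega$ where $\underline u_i>0$. For this $\underline u$ that is false.

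To see this, let $x^\star\in\partial\Omega$ be a point of $K$ with minimal $x_1$, so $\nu(x^\star)=e_1$. We have $\xi(t,x^\star)\le g(t)$, $g(3+2\omega^{-1})<0$, and $\xi(t,x^\star)\ge l(t)\to+\infty$. Hence there is $t^\star$ with $\xi(t^\star,x^\star)=0$, and necessarily $t^\star\ge t_1$. At $(t^\star,x^\star)$, for $\vp$ (hence $\mu=\bar K_4\vp$) small:
\begin{itemize}
\item $\underline u_i>0$;
\item $\chi=-V(t^\star)\le0$, so $q_i'(\chi)=0$;
\item $\nu\cdot\nabla\xi=\nu\cdot\nabla\zeta=1$.
\end{itemize}
Therefore $\nu\cdot\nabla\underline u_i(t^\star,x^\star)=\Phi_i'(0)-\sigma v(t^\star)q_*\ge\tilde\kappa-\mu q_*>0$. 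No shaping of $v$ can fix this, because $v\le\mu$ everywhere.

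Two of your intermediate claims are also wrong.
\begin{itemize}
\item The third line of \eqref{bt1} bounds $\Phi'$ at arguments $\ge-L+ct_\lambda+\beta e^{-(N-1)L^2}$. That is what the supersolution of Lemma \ref{sbs} sees on $\partial\Omega$. For $\underline u$ and $t\le t_1$ one only knows $\xi\le g(t)\le0$ on $\partial\Omega$, and $\Phi'(\xi)$ is of order one near $t=t_1$.
\item The claim $\xi\ge0$ on $\partial\Omega$ for $t\ge t_1$ fails. The equality $g(t_1)=0$ is only an upper bound; the correct lower bound is $\xi\ge l(t_1)+c(t-t_1)\ge-2L-2+c(t-t_1)$.
\end{itemize}

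The missing idea is the one you glimpsed and discarded. By \eqref{tl}, which is where the complete-propagation hypothesis \eqref{comp} enters, $u(t_\lambda-1+t,\cdot)\ge(1-\lambda)\bm1$ on $\partial\Omega$ for every $t\ge1$. So at boundary points where $\underline u\le(1-\lambda)\bm1$, the ordering $\underline u\le u(t_\lambda-1+t,\cdot)$ holds outright and no normal derivative is needed. The Neumann inequality has to be checked only where $\underline u\ge(1-\lambda)\bm1$, and there the argument goes as follows.
\begin{itemize}
\item $\Phi(\xi)\ge(1-\kappa_0)\bm1$ forces $\xi\ge2$, because $1-\kappa_0>\max_i\Phi_i(2)$.
\item Since $\xi\le g(t)$, this gives $t\ge t_1$. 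It also gives $\chi\ge2-V(t)\ge1$, so $Q(\chi)=Q_1$.
\item Then \eqref{bk22} together with $\xi\ge-2L-2+c(t-t_1)$ gives $\Phi'(\xi)\le\bar K_2\lambda^{1/2}e^{b(L+1)}e^{-bc(t-t_1)/2}\bm1$.
\item This is absorbed by $\sigma v(t)q_*=\sigma\bar K_3\lambda^{1/2}q_*e^{-\omega(t-t_1)}$, by the definition of $\bar K_3$ and $\omega\le bc/4$.
\end{itemize}

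In particular, nothing is checked on $\partial\Omega$ for $t<t_1$. The non-monotone piece of $v$ on $[t_1-2-2\omega^{-1},t_1]$ is not meant to control any normal derivative. It merely joins the branch $\mu e^{-\omega(t-1)}$ to the value $v(t_1)=\bar K_3\lambda^{1/2}$ needed afterwards, while keeping $\max v=\mu$ and $-v'\le2\omega v$ for the interior estimate.

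There is also a minor slip in your interior sketch. \eqref{ag} gives $\alpha-2D_i(N-1)\gamma^{-1}\le0$, so it is the $(N-1)$-term of the Laplacian that dominates the $\alpha t^{-1}$ term, not the reverse.
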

\begin{pr}
Let us first check the initial conditions.
For all $x\in\bar\Omega\cap\bar{B(\bm\theta,\bar R_{\lambda})}$, since $\gamma>1$ and $\Phi'\gg\bm0$, one gets from \eqref{ul2} that
\begin{align*}
  \underline u(1,x)\leq \Phi(x_1+c t_{\lambda}-\beta e^{-{|x'|^2}/\gamma} )
   \leq u(t_\lambda,x).
\end{align*}
For all $x\in\bar{\Omega\backslash B(\bm\theta,\bar R_{\lambda})}$, it follows from
 $\Phi'\gg\bm0$, $u\gg\bm0$, \eqref{2C.2}, \eqref{lamm} and \eqref{rl} that
\begin{align*}
\underline u(1,x)\leq\max\left(\Phi(x_1+c t_{\lambda})-\mu q_*\cdot\bm1,\bm0\r)
\leq \max\e( \Phi(x_1+c t_{\lambda})  -\lambda q_*\cdot\bm1,\bm0\r)\leq u(t_{\lambda},x).
\end{align*}
As a consequence,
$\underline u(1,x)\leq u(t_{\lambda},x)$  for all $x\in\bar\Omega$.

Next, we verify the boundary conditions on $\p\Omega$.
By \eqref{tl}, for all $t\geq1$ and $x\in\p \Omega$ such that $\underline u(t,x)\leq (1-\lambda)\cdot\bm1$, there holds
$\underline u(t,x)\leq u(t_{\lambda}-1+t,x)$.
We claim that
\begin{align}\label{clm}
 \nu(x)\cdot \nabla\underline u(t,x)\leq\bm0\
\text{ for all $t\geq1$ and $x\in\p \Omega$  such that $\underline u(t,x)\geq(1-\lambda)\cdot\bm1$.}
\end{align}

{\it Proof of claim \eqref{clm}.} Take any $t\geq1$ and $x\in\p \Omega$  such that $\underline u(t,x)\geq(1-\lambda)\cdot\bm1$.
From  \eqref{lamm} and the condition of the claim, one has
$$
\Phi(\xi(t,x))=\underline u(t,x)+v(t)Q(\chi(t,x))+\sigma v(t)q_*\zeta(x)\cdot\bm1\geq(1-\lambda)\cdot\bm1
\geq(1-\kappa_0)\cdot\bm1.
$$
Then it follows from \eqref{bk22} that
\begin{align}\label{PK}
  \Phi'(\xi(t,x))\leq \bar K_2\lambda^{1/2} e^{-b\xi(t,x)/2}\cdot\bm1.
\end{align}
  By  $0<\kappa_0<1-\max_{i=1,\cdots,m}\Phi_i(2)$, there holds
  $\Phi(\xi(t,x))\geq \max_{i=1,\cdots,m}\Phi_i(2)\cdot\bm1\geq\Phi(2)$.
  Since $\Phi$ is increasing, one gets that
  $\xi(t,x)\geq2$.
  On the other hand, it follows from \eqref{ck} and $V(t)\leq V(1)$ that
  $$
  2\leq\xi(t,x)\leq L+c(t-1+t_{\lambda})-\beta t^{-\alpha}e^{-\frac{(N-1)L^2}{\gamma t}} =g(t),
  $$
  where $g$ is defined as in \eqref{g}.
  By \eqref{gp} and \eqref{gt1}, one has $t\geq t_1$.

  For all $\tau\geq1$, we define the function
  $$
  l(\tau)=-L+c(\tau-1+t_{\lambda})-\beta \tau^{-\alpha}-V(1).
  $$
 Since
  $l'(\tau)=c+\alpha\beta \tau^{-\alpha -1} \geq c$,
  one gets from \eqref{ck} that
  \begin{align}\label{xl}
    \xi(t,x)\geq
    l(t)\geq l(t_1)+{c}(t-t_1).
  \end{align}
 Since $t_1\geq1$, one infers from \eqref{gm}, \eqref{ag},   \eqref{gt1} and \eqref{V1} that
  \begin{align*}
  l(t_1)=&\ l(t_1)-g(t_1)\\
  =&-2L-V(1)+\beta t_1^{-\alpha}\e(e^{-\frac{(N-1)L^2}{\gamma t_1}}-1\r)\\
  \geq &-2L -V(1)-2\beta t_1^{-\alpha}(N-1)L^2\gamma^{-1}\\
  \geq &-2L -V(1)-1\\
  \geq &-2L -2.
  \end{align*}
  Together with \eqref{PK} and \eqref{xl}, one obtains
  \begin{align}\label{pp}
   \Phi'(\xi(t,x))\leq\bar K_2\lambda^{1/2} e^{b(L+1)}e^{-b c(t-t_1)/2}\cdot\bm1.
  \end{align}
  Since $\xi(t,x)\geq2$, one has $\chi(t,x)\geq 2-  V(t)\geq1$. By \eqref{2C.a}, one has $Q(\chi(t,x))=Q_1$.
  Note that $\nu\cdot \nabla\zeta=1$, where $\nu=(\nu_1,\cdots,\nu_N)$ denotes the
  outward unit normal on $\p\Omega$.
   Since $\Phi'\gg\bm0$, $t\geq t_1\geq1$, it follows from \eqref{ck}, \eqref{ome}, \eqref{bk5}, \eqref{ag}, \eqref{pp} and the definition of $v(t)$
   that
  \begin{align*}
    \nu\cdot\nabla \underline u_i(t,x)
    =&\left(\nu_1+2\sum_{j=2}^N\nu_i\beta t^{-\alpha-1}x_i\gamma^{-1}e^{-\frac{|x'|^2}{\gamma t}}\right)\Phi_i'(\xi(t,x))
    -\sigma v(t)q_*\\
    \leq&\e(1+2\beta\gamma^{-1}(N-1)L\r)\Phi_i'(\xi(t,x))-\sigma v(t)q_*\\
    \leq& \e(1+\frac{(N-1)L}{4\bar D}\r)\bar K_2\lambda^{1/2}e^{b(L+1)}e^{-b c(t-t_1)/2}-\sigma v(t)q_*\\
    \leq&\sigma\bar K_3q_*\lambda^{1/2}e^{-b c(t-t_1)/2}
    -\sigma\bar K_3q_*\lambda^{1/2}e^{-\omega (t-t_1)}\\
    \leq &0
  \end{align*}
  for all $t\geq1$ and $x\in\p K$  such that $\underline u(t,x)\geq(1-\lambda)\cdot\bm1$ and
   for each $i=1,\cdots,m$.
  This completes the proof of claim \eqref{clm}.

 Fix any $i=1,\cdots,m$. We now prove
  $
  \mathscr L_i[\underline u](t,x)=(\underline u_i)_t(t,x)-D_i\Delta \underline u_i(t,x)-F_i(\underline u(t,x))\leq0
  $
  for  $t\geq1$ and $x\in\bar\Omega$ such that $\underline u_i(t,x)\geq0$.
  By direct calculations, one has
  \begin{align*}
  \mathscr L_i[\underline u](t,x)
  =&F_i(\Phi(\xi(t,x)))-F_i(\underline u(t,x))+V'(t)\Phi'(\xi(t,x))-q_i(\chi(t,x))v'(t)\\
  &-cv(t)q_i'(\chi(t,x))+\sigma D_iv(t)q_*\Delta\zeta(x)+D_iv(t)q_i''(\chi(t,x)) -\sigma v'(t) q_*\zeta(x)\\
  &+ \beta t^{-\alpha-1}e^{-\frac{|x'|^2}{\gamma t}}\left(\alpha-2D_i(N-1)\gamma^{-1}\right)
  \left(\Phi'_i(\xi(t,x))-v(t)q_i'(\chi(t,x))\right)\\
  &+ \beta \gamma^{-1}t^{-\alpha-2}e^{-\frac{|x'|^2}{\gamma t}}|x'|^2\left(4D_i\gamma^{-1}-1\right)
  \left(\Phi'_i(\xi(t,x))-v(t)q_i'(\chi(t,x))\right)\\
  &-4D_i \beta^2 \gamma^{-2}t^{-2\alpha-2}|x'|^2 e^{-\frac{2|x'|^2}{\gamma t}}
  \left( \Phi_i''(\xi(t,x))-v(t)q_i''(\chi(t,x))\right).
  \end{align*}
By \eqref{2C.b} and \eqref{ag}, one gets that $\alpha-2D_i(N-1)\gamma^{-1}\leq0$ and $4D_i\gamma^{-1}-1\leq0$.

  If $\xi(t,x)\geq C$, it then follows from $V\leq1$ and $C\geq2$ that $\chi(t,x)\geq C-V(t)\geq1$. By \eqref{2C.a}, one has $Q(\chi(t,x))=Q_1$.
 From \eqref{2C.2}, \eqref{C}, \eqref{sss}, \eqref{muuu} and \eqref{mv}, there holds
  $$
  \bm1\geq\Phi(\xi(t,x))\geq\underline u(t,x)\geq (1-\vp_0)\cdot\bm1-\mu q^*\cdot\bm1-\sigma\mu q_*\|\zeta\|_{L^\i(\bar\Omega)}\cdot\bm1
  \geq(1-4\vp_0)\cdot\bm1.
  $$
  By \eqref{2C.2}, \eqref{2C.3}, \eqref{2C.7} and \eqref{sss},  one gets that
  \begin{align*}
  F_i(\Phi(\xi(t,x)))-F_i(\underline u(t,x))
  \leq&\sum_{j=1}^m\mu_{ij}^1v(t)q_j^1+\sigma \Lambda v(t)q_*\|\zeta\|_{L^\i(\bar\Omega)}\\
  \leq&-\varpi q_i^1v(t)+\varpi q_*v(t)/2 \leq-\varpi q_*v(t)/2.
  \end{align*}
  Since $\Phi_i'>0$, $V'<0$ and $t\geq1$, it follows from \eqref{impo}, \eqref{2C.2}, \eqref{2C.b}, \eqref{bk1},
  \eqref{ome}, \eqref{sss}, \eqref{bt1},  \eqref{ag}
  and \eqref{vo} that
  \begin{align*}
  \mathscr L_i[\underline u](t,x)
  \leq
  &\beta \gamma^{-1}\e(4D_i\gamma^{-1}-1+4D_i\beta\gamma^{-1}\bar K_1\r)t^{-\alpha-2}e^{-\frac{|x'|^2}{\gamma t}}|x'|^2\Phi_i'(\xi(t,x))\\
  &+2\omega v(t)q_i^1-\varpi q_*v(t)/2+2\sigma\omega v(t)q_*\zeta(x)+\sigma\bar D q_*v(t)\Delta\zeta(x)\\
   \leq
  &v(t)\left(2\omega q^*+2\sigma\omega q_*\|\zeta\|_{L^\i(\bar\Omega)}+\sigma\bar D q_*\|\Delta\zeta\|_{L^\i(\bar\Omega)}-\varpi q_*/2 \right)\\
  \leq& 0.
  \end{align*}

  If $\xi(t,x)\leq- C$, it then follows from  $ V\geq0$ that   $\chi(t,x)\leq- C- V(t)\leq0$. By \eqref{2C.a}, one has $Q(\xi(t,x))=Q_0$.
 By  similar arguments as above, one obtains that
  $\mathscr L_i[\underline u](t,x)\leq0$.

  If $-C\leq\xi(t,x)\leq C$, it then follows from \eqref{2C.2}, \eqref{2C.7} and \eqref{zeta}  that
  \begin{align*}
  F_i(\Phi(\xi(t,x)))-F_i(\underline u(t,x))
  \leq\Lambda v(t)q_i(\chi(t,x))+\sigma\Lambda v(t)q_*\zeta(x)
  \leq2\Lambda v(t)q^*\|\zeta\|_{L^\i(\bar\Omega)} .
  \end{align*}
  Since $\Phi_i'>0$, $V'<0$ and $t\geq1$, it follows from \eqref{impo}, \eqref{2C.2}, \eqref{2C.b}, \eqref{zeta},
  \eqref{tkp}, \eqref{bk1}, \eqref{rr}, \eqref{muuu}, \eqref{bt1}, \eqref{ag},  \eqref{mv}, \eqref{vo} and \eqref{V}
   that
  \begin{align*}
  \mathscr L_i[\underline u](t,x)
  \leq
  &\beta \gamma^{-1}t^{-\alpha-2}e^{-\frac{|x'|^2}{\gamma t}}|x'|^2
  \left(4\bar D\gamma^{-1}\left(\|\Phi'_i\|_{L^\i(\R)}+\beta\|\Phi''_i\|_{L^\i(\R)}+M\beta\right)
  -\tilde\kappa\right)\\
  &+2\Lambda v(t)q^*\|\zeta\|_{L^\i(\bar\Omega)}+\tilde\kappa V'(t)+2\omega v(t)q_i(\chi(t,x))+cv(t)M \\
  &+\bar Dv(t)M+2\sigma\omega v(t)q_*\|\zeta\|_{L^\i(\bar\Omega)}+\sigma\bar Dq_*v(t)\|\Delta\zeta\|_{L^\i(\bar\Omega)}\\
  \leq
  &\tilde\kappa V'(t)+v(t)\rho\\
  \leq& 0.
  \end{align*}

As a result,  $\underline u$ is a subsolution of the equation satisfied by $u(t_\lambda-1+\cdot,\cdot)$ in $[1,+\i)\times\bar\Omega $. The proof is complete.
\end{pr}

\begin{lem}\label{sbs}
The function $\bar u=(\bar u_1,\cdots,\bar u_m)$ defined by
$$
\bar u(t,x)=\min\left(\Phi(\tilde\xi(t,x))+\tilde v(t) Q(\tilde\chi(t,x)) +\sigma\tilde v(t) q_*\zeta(x)\cdot\bm1,\bm1\right)
$$
is a supersolution of the problem satisfied by
 $u(t_{\lambda}-1+t,x)$ for $t\geq1$ and $x\in\bar\Omega$,
where
$\tilde\xi(t,x)=x_1+c(t-1+t_{\lambda})+\beta t^{-\alpha}e^{-\frac{|x'|^2}{\gamma t}}-\tilde V(t)+\tilde V(1)$ and
$\tilde\chi(t,x)=\tilde\xi(t,x)+\tilde V(t)$.
\end{lem}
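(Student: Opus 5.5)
I would follow the proof of Lemma \ref{ups} step by step, flipping every inequality. The structure becomes simpler because $\tilde v$ is an explicit exponential with $\tilde v'=-2\omega\tilde v$ by \eqref{fs0}. The correction $\tilde V$ satisfies $\tilde V'=-\rho\tilde\kappa^{-1}\tilde v$ by \eqref{fs1} and $\tilde V\le 1$ by \eqref{fs2}. The argument has three parts: the initial comparison at $t=1$, the Neumann condition on $\partial\Omega$, and the interior inequality $\mathscr L_i[\bar u]\ge0$ wherever $\bar u_i<1$. The interior inequality is checked in the three zones $\tilde\xi\ge C$, $\tilde\xi\le -C$ and $|\tilde\xi|\le C$.

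\emph{Initial condition.} At $t=1$ we have $\tilde\xi(1,x)=x_1+ct_\lambda+\beta e^{-|x'|^2/\gamma}$, which is at least $x_1+ct_\lambda+\beta e^{-|x'|^2}$ since $\gamma>1$. On $\bar\Omega\cap\bar{B(\bm\theta,\bar R_\lambda)}$, the right inequality of \eqref{ul2} and $\Phi'\gg\bm0$ give $\bar u(1,\cdot)\ge u(t_\lambda,\cdot)$, as long as $\bar u$ is not truncated at $\bm1$ (and if it is, the inequality holds trivially). Outside this ball, $\tilde v(1)=\mu\ge\lambda$ by \eqref{lamm} and $Q\ge q_*\bm1$, so \eqref{rl} gives $\Phi(x_1+ct_\lambda)+\mu q_*\bm1\ge u(t_\lambda,x)$.

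\emph{Boundary condition.} On $\partial\Omega$, \eqref{ck} and the second line of \eqref{bt1} keep the front far to the left for a long initial period. There $\tilde\chi\le 0$ for $t$ not too large, so $Q=Q_0$ is constant. Then
\[
\nu\cdot\nabla\bar u_i\ge -\Bigl(1+2\beta\gamma^{-1}(N-1)L\Bigr)\Phi_i'(\tilde\xi)+\sigma\tilde v(t)q_*,
\]
using $\nu\cdot\nabla\zeta=1$. I would bound $\Phi_i'$ by \eqref{1.a}. In the region where $\tilde\xi<0$ this gives $ae^{b\tilde\xi}$, and $\tilde\xi\le -L+c(t-1+t_\lambda)+\beta$ since $x\in\partial K$ and $V$ is small; I would in fact bound $x_1\ge -L$ on one side as needed. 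Because $b c/2\ge 2\omega$ by \eqref{ome}, the decay $e^{-bc(\cdot)}$ beats $\tilde v=\mu e^{-2\omega(t-1)}$ only when $\tilde\xi\to-\infty$. So instead I plan to use the same device as the subsolution: for large $t$, where $\partial\Omega$ sits behind the front, use \eqref{comp} and $u\le\bm1$. Concretely, wherever $\bar u\ge u(t_\lambda-1+t,\cdot)$ is already automatic (for instance where $\bar u=\bm1$, or where $\tilde\xi$ is large and $\bar u$ exceeds $1-\lambda$), the boundary condition is not needed. There the comparison principle can be applied on the set where $\bar u<\bm1$. On the remaining part, where $\tilde\xi(t,x)\ge 2$ fails, the exponential estimate from \eqref{1.a} together with the third line of \eqref{bt1} gives
\[
a\Bigl(1+\tfrac{(N-1)L}{4\bar D}\Bigr)e^{b\tilde\xi}\le\sigma\mu q_*e^{-2\omega(t-1)}.
\]
This works because for such points $t$ is bounded by roughly $t_1$ and $b c\ge 4\omega$. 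I expect this boundary step to be the main obstacle, specifically matching the exponential rates and handling the point where the front crosses $K$.

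\emph{Interior.} The expansion of $\mathscr L_i[\bar u]$ is the negative-signed analogue of the one in Lemma \ref{ups}. The curvature terms coming from $\beta t^{-\alpha}e^{-|x'|^2/(\gamma t)}$ now carry the favourable sign, because \eqref{ag} gives $\alpha-2D_i(N-1)\gamma^{-1}\le0$ and $4D_i\gamma^{-1}-1\le0$. The $\Phi''$ term is absorbed using \eqref{impo} and $\gamma=8\bar K_1\beta\bar D$.

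In the zones $\tilde\xi\ge C$ and $\tilde\xi\le-C$ (note $\tilde\chi=\tilde\xi+\tilde V$ and $0\le\tilde V\le1$), $\bar u$ lies within $4\vp_0$ of $\bm1$ or $\bm0$ by \eqref{muuu}, and \eqref{2C.3} yields
\[
F_i(\Phi)-F_i(\bar u)\ge\varpi q_*\tilde v/2,
\]
which absorbs $2\omega\tilde v q^*$ and the $\zeta$ terms by \eqref{ome} and \eqref{sss}. In the middle zone, the term $-\tilde V'\Phi'\ge\rho\tilde v$ (by \eqref{tkp} and \eqref{fs1}) dominates all remaining terms by the choice \eqref{rr} of $\rho$. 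This gives $\mathscr L_i[\bar u]\ge0$ and completes the proof.
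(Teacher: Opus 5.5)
Your initial comparison and interior estimates match the paper's proof. One small addition is needed in the middle zone: the curvature terms multiply $\Phi_i'+\tilde v q_i'$ with $q_i'\le0$, so their favourable sign uses $\tilde v M\le\mu M\le\tilde\kappa/2$ from \eqref{muuu}. In the outer zones $q_i'(\tilde\chi)=0$.

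The boundary step, however, rests on a false picture and would not go through. You claim that on $\partial\Omega$ the front is still far to the left for an initial period, so that $\tilde\chi\le0$ and $Q=Q_0$. But the second line of \eqref{bt1} concerns $g$, i.e.\ the backward-shifted front of the subsolution. For $\bar u$ the $\beta$-term enters with a plus sign and $\tilde V(1)-\tilde V(t)\ge0$. Hence for $x\in\partial\Omega$ one has $\tilde\xi(t,x)\ge k(t):=-L+c(t-1+t_\lambda)+\beta t^{-\alpha}e^{-(N-1)L^2/(\gamma t)}$, with $k'(t)\ge c-\alpha\beta\ge c/2$ by \eqref{bk1} and \eqref{ag}. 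Thus $\tilde\xi(t,x)\ge -L+ct_\lambda+\beta e^{-(N-1)L^2}+\frac c2(t-1)\ge1$ for every $t\ge1$, because $t_\lambda>(L+1)/c$. The obstacle is already behind the front for all $t\ge1$, so there is no crossing to handle.

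The device you borrow from Lemma \ref{ups} is also unavailable. \eqref{tl} is a lower bound $u(t_\lambda-1+t,\cdot)\ge(1-\lambda)\bm1$ on $\partial\Omega$, which can only make a \emph{subsolution}'s boundary condition superfluous. Knowing $\bar u\ge(1-\lambda)\bm1$ says nothing about $\bar u\ge u$; only $\bar u_i=1$ does. Finally, your closing estimate involves $e^{b\tilde\xi}$, whereas the third line of \eqref{bt1} controls $e^{-b(\cdot)}$, the tail of $\Phi'$ as $\tilde\xi\to+\infty$. The time $t_1$ plays no role for $\bar u$.

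The missing argument needs no case split. Since $\tilde\chi\ge\tilde\xi\ge1$ on $\partial\Omega$, we have $q_i'(\tilde\chi)=0$ there. Also $|\nu\cdot\nabla\tilde\xi|\le 1+2\beta\gamma^{-1}(N-1)L\le 1+\frac{(N-1)L}{4\bar D}$ (using $\gamma=8\bar K_1\beta\bar D$, $\bar K_1\ge1$), and $\nu\cdot\nabla\zeta=1$. By \eqref{1.a}, the third line of \eqref{bt1} and $2\omega\le bc/2$ from \eqref{ome},
\[
\nu\cdot\nabla\bar u_i\ge -a\Bigl(1+\tfrac{(N-1)L}{4\bar D}\Bigr)e^{-b(-L+ct_\lambda+\beta e^{-(N-1)L^2})}e^{-\frac{bc}{2}(t-1)}+\sigma\mu q_*e^{-2\omega(t-1)}\ge0
\]
for all $t\ge1$ and $x\in\partial\Omega$ with $\bar u_i<1$. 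This is exactly how the paper handles the boundary, and it is the piece your proposal is missing.
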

\begin{pr}
Let us first check the initial conditions.
For $x\in\bar\Omega\cap\bar{B(\bm\theta,\bar R_{\lambda})}$,
since $\Phi'\gg\bm0$ and $\gamma>1$, one gets from \eqref{ul2} that
\begin{align*}
\bar u(1,x)&\geq\Phi\left(x_1+ct_{\lambda}+\beta e^{-{|x'|^2}/{\gamma}}\right)
\geq u(t_{\lambda},x).
\end{align*}
For $x\in\bar {\Omega\backslash B(\bm\theta,\bar R_{\lambda})}$,
since $\Phi'\gg\bm0$ and $u\ll\bm1$, it follows from \eqref{2C.2}, \eqref{lamm} and \eqref{rl} that
$$
\bar u(1,x)\geq\min\left(\Phi(x_1+c t_{\lambda})+\mu q_*\cdot\bm1,\bm1\right)
\geq\min\left(\Phi(x_1+ct_{\lambda})+\lambda q_*\cdot\bm1,\bm1\right) \geq u(t_{\lambda},x).
$$
As a consequence, one obtains
$\bar u(1,x)\geq u(t_{\lambda},x)$  for all $x\in\bar\Omega$.

We then verify the boundary conditions.
For all $t\geq1$, we define
$$
k(t)=-L+c(t-1+t_{\lambda})+\beta t^{-\alpha}e^{-\frac{(N-1)L^2}{\gamma t}}.
$$
By \eqref{2C.b}, \eqref{bk1} and \eqref{ag}, one has
$$
k'(t)\geq c-\alpha\beta t^{-\alpha-1}e^{-\frac{(N-1)L^2}{\gamma t}}
\geq c-\alpha\beta\geq c-\frac{N-1}{4\bar K_1}
\geq\frac{c}{2}\ \ \text{ for all }t\geq1.
$$
For all $t\geq1$ and $x\in\p \Omega$, it follows from \eqref{ck}, \eqref{fs1}, $\gamma\geq1$ and $t_{\lambda}\geq(L+1)/c$ that
\begin{align*}
\tilde\xi(t,x)\geq k(t)\geq k(1)+\frac{c}{2}(t-1)
\geq-L+c t_{\lambda}+\beta e^{-(N-1)L^2}+\frac{c}{2}(t-1)\geq-L+c t_{\lambda} \geq1.
\end{align*}
On the other hand,
 one infers from $\tilde V(t)>0$  that
$
\tilde\chi(t,x)\geq\tilde\xi(t,x)\geq1$,
hence $Q(\tilde\xi(t,x))=Q_1$ by \eqref{2C.a}.
Since $\nu\cdot\nabla\zeta=1$ on $\p\Omega$,  it can be deduced from \eqref{1.a}, \eqref{ck}, \eqref{bk1}, \eqref{ome}, \eqref{bt1}, \eqref{ag}
and the definition of $\tilde v(t)$ that

\begin{align*}
\nu\cdot\nabla\bar u_i(t,x)
&\geq\left|\nabla\P_i(\tilde\xi(t,x))\right|+\sigma\tilde v(t) q_*\\
&\geq\Phi_i'(\tilde\xi(t,x))\left(1+4(N-1)\beta^2t^{-2\alpha-2}e^{-\frac{2|x'|^2}{\gamma t}}\gamma^{-2}|x'|^2\right)^{1/2}
+\sigma\tilde v(t) q_*\\
&\geq -a\left(1+4(N-1)L^2\beta^2\gamma^{-2}\right)^{1/2}e^{-b(-L+c t_{\lambda}+\beta e^{-(N-1)L^2}+\frac{c}{2}(t-1))}
+\sigma\tilde v(t) q_*\\
&\geq-a\left(1+\frac{(N-1)L^2}{16\bar K_1^2\bar D^2}\right)^{1/2}e^{-b(-L+c t_{\lambda}+\beta e^{-(N-1)L^2})-\frac{b c}{2}(t-1)}
+\sigma\tilde v(t) q_*\\
&\geq-a\left(1+\frac{(N-1)L}{4\bar D}\right)e^{-b(-L+c t_{\lambda}+\beta e^{-(N-1)L^2})}e^{-2\omega(t-1)}
+\sigma\mu q_*e^{-2\omega(t-1)}\\
&\geq0
\end{align*}
for $t\geq1$ and $x\in\p\Omega$ and for each $i=1,\cdots,m$.

Fix any $i=1,\cdots,m$, it suffices to prove that
$
\mathscr L_i[\bar u](t,x)
=(\bar u_i)_t(t,x)-D_i\Delta \bar u_i(t,x)-F_i(\bar u(t,x))\geq0
$
for $t\geq1$ and $x\in\bar\Omega$ such that $\bar u_i(t,x)<1$.
By direct calculations, one has
\begin{align*}
\mathscr L_i[\bar u](t,x)
=&F_i(\Phi(\tilde\xi(t,x)))-F_i(\bar u(t,x))-\tilde V'(t)\Phi_i'(\tilde\xi(t,x))-D_i\tilde v(t)q_i''(\tilde\chi(t,x))\\
&+\sigma q_*\tilde v'(t)\zeta(x)-\sigma D_iq_*\tilde v(t)\Delta\zeta(x)
+c\tilde v(t)q_i'(\tilde\chi(t,x))+\tilde v'(t)q_i(\tilde\chi(t,x))\\
&
+\beta t^{-\alpha-1}e^{-\frac{|x'|^2}{\gamma t}}\left(2D_i(N-1)\gamma^{-1}-\alpha\right)
\left(\Phi_i'(\tilde\xi(t,x))+\tilde v(t)q_i'(\tilde\chi(t,x))\right)\\
&+\beta\gamma^{-1}t^{-\alpha-2}e^{-\frac{|x'|^2}{\gamma t}}|x'|^2\left(1-4D_i\gamma^{-1}\right)
\left(\Phi_i'(\tilde\xi(t,x))+\tilde v(t)q_i'(\tilde\chi(t,x))\right)\\
&-4D_i\beta^2\gamma^{-2}t^{-2\alpha-2}e^{-\frac{2|x'|^2}{\gamma t}}|x'|^2
\left(\Phi_i''(\tilde\xi(t,x))+\tilde v(t)q_i''(\tilde\xi(t,x))\right)
.
\end{align*}
By \eqref{2C.b} and \eqref{ag},   $2D_i(N-1)\gamma^{-1}-\alpha\geq0$ and $1-4D_i\gamma^{-1}\geq0$.

If $\tilde\xi(t,x)\geq C$, it then follows from  $\tilde V(t)\geq0$ that
$\tilde\chi(t,x)\geq C>1$.
By \eqref{2C.a}, one has $Q(\tilde\chi(t,x))=Q_1$.
From \eqref{C}, there holds
$
\bm1\geq\bar u(t,x)\geq \Phi(\tilde\xi(t,x))\geq(1-\vp_0)\cdot\bm1$.
Then, one infers from \eqref{2C.2}, \eqref{2C.3}, \eqref{2C.7} and \eqref{sss} that
\begin{align*}
F_i(\Phi(\tilde\xi(t,x)))-F_i(\bar u(t,x))
\geq&-\sum_{j=1}^m\mu_{ij}^1\tilde v(t)q_j^1 -\sigma\Lambda q_*\tilde v(t)\zeta(x)\\
\geq&\varpi\tilde v(t)q_i^1 -\sigma\Lambda q_*\tilde v(t)\|\zeta\|_{L^\i(\bar\Omega)}\geq
\frac{1}{2}\varpi\tilde v(t)q_* .
\end{align*}
Together with $\Phi_i'>0$, $\tilde V'(t)<0$, $t\geq1$, one gets from  \eqref{impo}, \eqref{2C.b}, \eqref{ome}, \eqref{ag}
and \eqref{fs0} that
\begin{align*}
\mathscr L_i[\bar u](t,x)
\geq&\tilde v(t)\left(\frac{1}{2}\varpi q_*-2\omega q^*-2\sigma \omega q_*\|\zeta\|_{L^\i(\bar\Omega)} -\sigma \bar D q_*\|\Delta\zeta\|_{L^\i(\bar\Omega)}\right)
\geq0.
\end{align*}

If $\tilde\xi(t,x)\leq -C$, then
$\tilde\chi(t,x)\leq- C+\tilde V(t)\leq0$ by \eqref{fs2} and $C>2$.
By \eqref{2C.a},  $Q(\tilde\chi(t,x))=Q_0$.
Similarly,
$\mathscr L_i[\bar u](t,x)\geq0$.

If $-C\leq\tilde\xi(t,x)\leq C$, it then follows from \eqref{2C.2}, \eqref{2C.7} and \eqref{zeta} that
$$
F_i(\Phi(\tilde\xi(t,x)))-F_i(\bar u(t,x))
\geq-2\Lambda\tilde v(t)q^*\|\zeta\|_{L^\i(\bar\Omega)}.
$$
Since $\Phi_i'>0$, $\tilde V'(t)<0$, $t\geq1$, it follows from \eqref{impo}, \eqref{2C.2}, \eqref{2C.b}, \eqref{tkp},
\eqref{rr}, \eqref{muuu}, \eqref{ag}, \eqref{fs0} and \eqref{fs1}
that
\begin{align*}
\mathscr L_i[\bar u](t,x)
\geq&-2\Lambda\tilde v(t)q^*\|\zeta\|_{L^\i(\bar\Omega)}-\tilde V'(t)\tilde\kappa
-2\sigma\omega q_*\tilde v(t)\zeta(x)-\bar D\tilde v(t)M-\sigma \bar Dq_*\tilde v(t)\|\Delta\zeta\|_{L^\i(\bar\Omega)}\\
&-c\tilde v(t)M -2\omega \tilde v(t)q_i(\chi(t,x))
+\beta t^{-\alpha-1}e^{-\frac{|x'|^2}{\gamma t}}\left(2D_i(N-1)\gamma^{-1}-\alpha\right)
\left(\tilde\kappa-\mu M\right)\\
&+\beta\gamma^{-1}t^{-\alpha-2}e^{-\frac{|x'|^2}{\gamma t}}|x'|^2
\left(\tilde\kappa-\mu M -4\bar D\gamma^{-1}\left(\|\Phi'_i\|_{L^\i(\R)}+\beta M+\beta\|\Phi''_i\|_{L^\i(\R)}\right)\right)\\
\geq&-\tilde V'(t)\tilde\kappa-\rho\tilde v(t)\\
\geq&\ 0.
\end{align*}

As a result,
$\bar u$ is a supersolution of the problem satisfied by $u(t_{\lambda}-1+\cdot,\cdot)$ in $[1,+\i)\times\bar\Omega$. The proof is complete.
\end{pr}
\vspace{0.2cm}

\begin{pr}[Proof of Theorem \ref{t2}]
We divide the proof into two steps.

{\it Step 1: proof of \eqref{l+}}.
By Lemma \ref{ups} and the comparison principle, one concludes that
$\underline u(t,x)\leq u(t+t_{\lambda}-1,x)$ for all
$t\geq1\text{ and }x\in\bar\Omega$.
By \eqref{2C.2}, for each $i=1,\cdots,m$ and for all $t\geq t_{\lambda}$,
\begin{align*}
&\inf_{x\in\bar \Omega}\left(u_i(t,x)-\Phi_i(x_1+c t)\right)\\
\geq&\inf_{x\in\bar \Omega}\left(\underline u_i(t+1-t_{\lambda},x)-\Phi_i(x_1+c t)\right)\\
\geq&\inf_{x\in\bar \Omega}\left(\Phi_i(\xi(t+1-t_{\lambda},x))
-v(t+1-t_{\lambda})\e(q^*+\sigma q_*\|\zeta\|_{L^\i(\bar\Omega)}\r)
-\Phi_i(x_1+c t)\right)\\
\geq&-\left(\beta(t+1-t_{\lambda})^{-\alpha}+V(1)-V(t+1-t_{\lambda})\right)
\|\Phi_i'\|_{L^\i(\R)}\\
&-v(t+1-t_{\lambda})\e(q^*+\sigma q_*\|\zeta\|_{L^\i(\bar\Omega)}\r).
\end{align*}
By \eqref{V1}, one gets that the right-hand side converges to
$-V(1)\|\Phi_i'\|_{L^\i(\R)}\geq-\vp$ as $t\to+\i$.
Since $\vp>0$ is arbitrary, one has
$$
\liminf_{t\to+\i}\left(\inf_{x\in\bar \Omega}\left(u(t,x)-\Phi(x_1+c t)\right)\right)\geq\bm0.
$$

On the other hand, by Lemma \ref{sbs} and the comparison  principle, one has
$\bar u(t,x)\geq u(t-1+t_{\lambda})$ for all $t\geq1$ and $x\in\bar\Omega$.
By \eqref{2C.2}, for each $i=1,\cdots,m$ and  for all $t\geq t_{\lambda}$,
\begin{align*}
&\sup_{x\in\bar\Omega}\left(u_i(t,x)-\Phi_i(x_1+c t)\right) \\
\leq&\sup_{x\in\bar\Omega}\left(\underline u_i(t+1-t_{\lambda},x)-\Phi_i(x_1+c t)\right) \\
\leq&\sup_{x\in\bar\Omega}\left(\Phi_i(\tilde\xi(t+1-t_{\lambda},x))
+\tilde v(t+1-t_{\lambda})\left(q^*+\sigma q_*\|\zeta\|_{L^\i(\bar\Omega)}\right)-\Phi_i(x_1+c t)\right) \\
\leq&\left(\beta(t+1-t_{\lambda})^{-\alpha}+\tilde V(1)-\tilde V(t+1-t_{\lambda})\right)\|\Phi_i'\|_{L^\i(\R)}\\
&+\tilde v(t+1-t_{\lambda})\left(q^*+\sigma q_*\|\zeta\|_{L^\i(\bar\Omega)}\right).
\end{align*}
 By \eqref{fs2},  one gets that the right-hand side converges to $\tilde V(1)\|\Phi_i'\|_{L^\i(\R)}\leq\vp$ as $t\to+\i$.
Since $\vp>0$ is arbitrary, one has
$$
\limsup_{t\to+\i}\left(\sup_{x\in\bar \Omega}\left(u(t,x)-\Phi(x_1+c t)\right)\right)\leq\bm0.
$$
As a conclusion, the formula \eqref{l+} follows.

{\it Step 2: proof of \eqref{UP}.}
Choose
$$
  \rho_1=\tilde\kappa^{-1}\left((2cM+\bar D M+\Lambda q^*+q*)\|\zeta\|_{L^\i(\bar\Omega)}+2\bar DM\|\nabla\zeta\|_{L^\i(\bar\Omega)}
  +\bar D q^*\|\Delta\zeta\|_{L^\i(\bar\Omega)}\right).
$$
Let $\vp$ be an arbitrary positive real number satisfying
$$
0<\vp<\min\e(1,\frac{\vp_0}{q^*\|\zeta\|_{L^\i(\Omega)}},\frac{\varpi}{2},\frac{c}{\rho_1}\r).
$$
By \eqref{1.5} and \eqref{l+}, there exist $t_\vp>0$ such that
\begin{align}\label{upp}
-\vp q_*\cdot\bm1\leq u(t,x)-\Phi(x_1+c t)\leq\vp q_*\cdot\bm1\ \text{ for all }|t|\geq t_\vp\text{ and }x\in\bar\Omega.
\end{align}
Let $\rho_2=L+c t_\vp+1$.
For $t\geq -t_\vp$ and $x\in\bar\Omega$, we define functions $u^\pm=(u^\pm_1,\cdots,u^\pm_m)$ by
$$\bc
u^+(t,x)=\min\left(\Phi(\xi^+(t,x))+\vp Q(\xi^+(t,x))\zeta(x)e^{-\vp (t+t_\vp)},\bm1\right),\\
u^-(t,x)=\max\left(\Phi(\xi^-(t,x))-\vp Q(\xi^-(t,x))\zeta(x)e^{-\vp (t+t_\vp)},\bm0\right),
\ec$$
where $\xi^\pm(t,x)=x_1+c t\pm\rho_1(1-e^{-\vp (t+t_\vp)})\pm\rho_2$.
We shall prove that the function $u^+$ is a supersolution of \eqref{1.1} in $[-t_\vp,+\i)\times\bar\Omega$.
The fact that $\underline u$ is a subsolution can be obtained by  similar arguments.
Let us first verify the initial and boundary conditions.
By \eqref{2C.2}, \eqref{zeta}, \eqref{upp} and $\Phi'\gg\bm0$, there holds
$$
u^+(-t_\vp,x)\geq\min\left(\Phi(x_1-c t_\vp)+\vp q_*\cdot\bm1,\bm1\right)\geq u(-t_\vp,x)\ \text{ for }x\in\bar\Omega.
$$
For $t\geq-t_\vp$ and $x\in\p\Omega$,
it follows from \eqref{ck}  that $\xi^+(t,x)\geq -L-c t_\vp+\rho_2=1$,
hence  $Q(\xi^+(t,x))=Q_1$ by \eqref{2C.a}.
Since $\nu\cdot\nabla\zeta=1$, one infers that for each $i=1,\cdots,m$,
$\nu\cdot\nabla u_i^+(t,x)=\vp Q_1 e^{-\vp(t+t_\vp)}\geq0$ for all $t\geq-t_\vp$ and $x\in\p\Omega$
such that $u_i^+(t,x)<1$.
 By similar arguments as in Lemma \ref{w-},
 one can obtain that
 $\mathscr L_i[u^+](t,x)=(u_i^+)_t(t,x)-D_i\Delta u_i^+(t,x)-F_i(u^+(t,x))\geq0$
 for $t\geq-t_\vp$ and $x\in\bar\Omega$ such that
 $u^+_i(t,x)<1$ and for each $i=1,\cdots,m$.

Since $\Phi(-\i)=\bm0$ and $\Phi(+\i)=\bm1$, it follows from the comparison principle  that
 there exist $K_1>0$ and $K_2>0$ large enough such that
$$
u(t,x)\geq  u^-(t,x)\geq \min(\Phi(x_1+c t-\rho_1-\rho_2)-\vp q_*\cdot\bm1,\bm1)\geq(1- 2\vp q_*)\cdot\bm1
$$
for all $t\geq-t_\vp$ and $x\in\bar\Omega$ such that $x_1+c t\geq K_1$, and
$$
u(t,x)\leq u^+(t,x)\leq \max(\Phi(x_1+c t+\rho_1+\rho_2)+\vp q_*\cdot\bm1,\bm0)\leq 2\vp q_*\cdot\bm1
$$
for all $t\geq-t_\vp$ and $x\in\bar\Omega$ such that $x_1+c t\leq- K_2$.
Therefore, by similar arguments as in Lemma \ref{up2},
one can get that there exists $\bar R_\vp>0$ large enough such that
\begin{align}\label{vppp}
-2\vp q_*\cdot\bm1\leq u(t,x)-\Phi(x_1+c t)\leq2\vp q_*\cdot\bm1
\ \text{ for }t\geq-t_\vp \text{ and }x\in\bar\Omega\text{ such that }|x|\geq\bar R_\vp.
\end{align}
Since $\vp$ is arbitrary,
one deduces from \eqref{upp} and \eqref{vppp} that \eqref{UP} holds.
That completes the proof of Theorem \ref{t2}.
\end{pr}

\section{Some geometrical conditions for complete propagation}\label{s5}
This section is concerned with sufficient conditions to ensure the complete propagation.
For this purpose, we prove two Liouville-type results of stationary solutions $u_\i$ of \eqref{ui} by sliding methods, that is, $u_\i\equiv\bm1$ in $\bar\Omega$, when the obstacle $K$ is star-shaped or directionally convex.
We first show  the following auxiliary lemma.
\begin{lem}\label{Uxi}
There exists a function $U=(U_1,\cdots,U_m)\in C^2([0,+\i),[\bm0,\bm1])$ such that
\begin{align}\label{Ueq}
\bc
DU''(\xi)+F(U(\xi))=\bm0,&\ \xi>0,\\
U'(\xi)\gg\bm0,&\ \xi\geq0,\\
U(0)=\bm0,\ U(+\i)=\bm1.
\ec
\end{align}
\end{lem}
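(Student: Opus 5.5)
Since $c>0$, we expect a stationary front in the half-line: with speed zero, the solution "wins" and pins at a Dirichlet boundary $\bm0$. The plan is to build $U$ as the limit of a monotone construction on the half-line $[0,+\i)$.

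\textbf{Step 1: truncated problems.} For each large $n$, we solve the ODE system
\begin{align*}
DU''+F(U)=\bm0 \text{ in } (0,n), \quad U(0)=\bm0,\quad U(n)=\bm1,
\end{align*}
by the parabolic flow. We start from a subsolution of the stationary problem and let the cooperative system (A4) evolve it with these Dirichlet data. The subsolution is built from the planar front: take $\underline U(\xi)=\max\bigl(\Phi(\xi-A)-\dl Q(\xi-A),\bm0\bigr)$ on $[0,n]$, suitably modified.

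The difficulty is that $\Phi(\xi-A)$ itself satisfies $D\Phi''+F(\Phi)=c\Phi'\gg\bm0$. So $\Phi(\cdot-A)$ is a stationary subsolution, meaning $D\Phi''+F(\Phi)\geq\bm0$. This is exactly the key point, and it uses $c>0$ together with $\Phi'\gg\bm0$. Accordingly, we use $\underline U=\max(\Phi(\xi-A)-\Phi(-A)\cdot\bm1\text{-type correction},\bm0)$, or more simply $\underline U(\xi)=\max\bigl(\Phi(\xi-A)-\vp P(\xi-A),\bm0\bigr)$ with small $\vp$. This is checked with the Perron--Frobenius directions as in Lemma \ref{l2.2}.

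In the transition zone $[-C,C]$, the term $c\Phi'\geq c\kappa$ absorbs the $O(\vp)$ errors. Near $\bm0$ and $\bm1$, (2.6) gives the sign. Choosing $A$ large makes $\underline U(0)=\bm0$. The constant $\bm1$ is a supersolution.

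\textbf{Step 2: monotonicity and passage to the limit.} Starting the parabolic flow from $\underline U$, the solution is nondecreasing in time by the comparison principle (Lemma \ref{com} applied to the time-shifted solution). It therefore converges to a stationary solution $U_n$ with $\underline U\leq U_n\leq\bm1$.

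A sliding argument in $\xi$, comparing $U_n(\cdot+s)$ with $U_n$ on $[0,n-s]$ and decreasing $s$ to $0$, gives $U_n'\geq\bm0$. Alternatively, we start from a monotone subsolution and note that the flow preserves monotonicity because $\p_\xi$ of the solution satisfies a cooperative linear system with nonnegative boundary data.

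Next we let $n\to\i$. Elliptic estimates give local $C^2$ convergence to $U$ solving the ODE with $U(0)=\bm0$, $U'\geq\bm0$, and $U\geq\underline U$. Hence $U(+\i)$ exists and is an equilibrium $\geq(1-\vp')\cdot\bm1$. By (A2), and since $\bm1$ is isolated via (2.6), this equilibrium is $\bm1$.

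\textbf{Step 3: strict monotonicity.} The vector $W=U'\geq\bm0$ solves $DW''+F'(U)W=\bm0$, a cooperative linear system. By the strong maximum principle, each component either vanishes identically or is positive on $(0,\i)$. Vanishing is impossible because $U_i(0)=0<U_i(+\i)=1$. At $\xi=0$, we have $U_i\geq0=U_i(0)$, and $U_i$ satisfies $D_iU_i''+(\text{positive cooperative terms})+F_i(U)-F_i$ linear structure. The Hopf lemma applied to the linearization of $U$ around $\bm0$ then yields $U_i'(0)>0$.

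The main obstacle is Step 1, namely verifying carefully that the perturbed front is a genuine subsolution of the stationary system uniformly, including at the truncation point $0$. A second difficulty is ensuring the limit does not degenerate, i.e. that the profile does not escape to $+\i$ as $n\to\i$. The lower bound $U_n\geq\underline U$, which is fixed independently of $n$, handles the latter.
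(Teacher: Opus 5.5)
Your proposal is correct and follows essentially the paper's route. The subsolution is a shifted front, which is a stationary subsolution precisely because $c\Phi'\gg\bm0$, corrected by a small multiple of a Perron--Frobenius direction field so that it vanishes near $\xi=0$; this is followed by a time-monotone parabolic flow, shift comparison to get $U'\geq\bm0$, and the strong maximum principle for strictness. The differences are minor: you truncate to $(0,n)$ and drop the paper's cutoff $h$, and because $P$ is increasing (unlike $Q$), monotonicity of your $\underline U$ needs $\vp M<\kappa$. Your identification of $U(+\infty)=\bm1$ via isolation of $\bm1$, and of $U_i'(0)>0$ via the Hopf lemma, is actually more careful than the paper's concluding ``$\delta$ is arbitrary'' step.
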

\begin{pr}
 We divide the proof into three  steps.

{\it Step 1: choice of parameters and notations.}
Let $q_*$, $q^*$, $\vp_0$, $\varpi$, $\Lambda$, $\bar D$, $\underline D$ and $L$ be
positive constants defined as in the
beginning of section \ref{s2}.
Fix an arbitrary constant $\dl\in(0,\vp_0/q{^*})$.
By the properties of $\Phi$, there exists a constant $C>1$ such that
\begin{align}\label{defc}
\Phi(\xi)\leq \dl q_*\cdot\bm1\ \text{ for }\xi\leq-C \ \text{ and } \
\Phi(\xi)\geq (1-\dl q_*)\cdot\bm1 \ \text{ for }\xi\geq C.
\end{align}
Furthermore, there exists $\kappa>0$ such that
 \begin{align}\label{kap}
\kappa=\min_{i=1,\cdots,m}\left(\inf_{\xi\in[-C,C]}\P_i'(\xi)\right).
\end{align}
Denote
 \begin{align}\label{d1}
\dl_1=\min\left(\dl,\frac{1}{q^*},\frac{c\kappa}{\e(2\Lambda q^*+\bar D M\r)}\right)
\ \text{ and }
\ \dl_2=\min\left(\frac{\dl_1 \varpi q_*}{\Lambda q^*},\frac{\dl_1}{2},\frac{\dl_1q_*}{q^*},
\frac{\varpi\dl_1 q_*}{2(\bar D+2\Lambda q_*)}\r)
.
\end{align}
Since $\Phi(-\i)=\bm0$ and $\Phi(+\i)=\bm1$,
there exists $S_1\geq C$ such that
\begin{align}\label{S1}
\Phi(\xi)\leq \dl_2 q_*\cdot\bm1\ \text{ for }\xi\leq-S_1 \ \text{ and } \
\Phi(\xi)\geq (1-\dl_2 q_*)\cdot\bm1 \ \text{ for }\xi\geq S_1.
\end{align}
Let $\hat h:\R\to[0,1]$ be a nondecreasing $C^2$ function such that
 \begin{align}\label{hh}
\hat h(s)=0\ \text{ for }s\leq0\ \ \text{ and }\ \
\hat h(s)=1\ \text{ for }s\geq S_2,
\end{align}
where $S_2>0$ can be chosen sufficiently large such that
\begin{align}\label{S2}
\|\hat h''\|_{L^\i(\R)}+2\|\hat h'\|_{L^\i(\R)}\max_{i=1,\cdots,m}\|\Phi_i'\|_{L^\i(\R)}\leq \dl_2.
\end{align}
Take $S_0=S_1+S_2+C$, $S\geq S_0$ and $h(s)=\hat h(s-S+S_0)$.

{\it Step 2: construction of a subsolution.} For all $\xi\geq0$,
we define the function $\underline U=(\underline U_1,\cdots,\underline U_m)$ as
\begin{align*}
\underline U(\xi)=\max\e(h(\xi)\Phi(\xi-S+C)+\dl_2 Q_0-\dl_1 Q(\xi-S+C),\bm0\r).
\end{align*}
We shall prove that the function $\underline U$ is a subsolution of the elliptic boundary problem
\begin{align}\label{Uueq}
DU''(\xi)+F(U(\xi))=\bm0,\ \
U(0)=\bm0\ \text{ and  }\ U(+\i)=\bm1+\dl_2Q_0-\dl_1Q_1(\leq\bm1)
\end{align}
on $[0,+\i)$.
 Note that $\bm0\leq\underline U(\xi)\leq\bm1+\dl_2 q^*-\dl_1q_*\leq\bm1$ for all $\xi\geq0$
by \eqref{2C.2} and \eqref{d1}.
 We first verify the boundary conditions.
 On the one hand, since $S_0-S\leq0$ and $-S+C\leq -S_0+C\leq -(S_1+S_2)\leq0$,
  it  follows from \eqref{2C.a}, \eqref{d1} and \eqref{hh} that
$$
\underline U(0)=\max\e(\dl_2 Q_0-\dl_1 Q_0,\bm0\r)=\bm0.
$$
On the other hand, by \eqref{2C.a}, \eqref{d1} and $\P(+\i)=\bm1$, one gets that
$$
\underline U(+\i)=\max\e(\bm1+\dl_2 Q_0-\dl_1 Q_1,\bm0\r)=\bm1+\dl_2 Q_0-\dl_1 Q_1.
$$

Fix any $i=1,\cdots,m$. It is suffices to check that
$
\mathscr L_i[\underline U](\xi)=D_i\underline U_i''(\xi)+F_i(\underline U(\xi))\geq0
$
for all $\xi\geq0$ such that $\underline U_i(\xi)>0$.
If $0\leq\xi\leq S-S_0$, then $\xi-S+S_0\leq0$ and $\xi-S+C\leq0$. By \eqref{2C.a}, \eqref{2C.2},  \eqref{d1} and \eqref{hh}, one has
$\bm0\leq\underline U(\xi)=\dl_2 Q_0-\dl_1 Q_0\leq\bm0$, that is, $\underline U(\xi)=\bm0$.

If $S-S_0\leq\xi\leq S-S_0+S_2$, then $0\leq\xi-S+S_0\leq S_2$ and
$\xi-S+C\leq-S_0+S_2+C=-S_1<0$.
From \eqref{2C.a} and  \eqref{hh}, one gets that $h(\xi)\in[0,1]$ and  $\underline U(\xi)=h(\xi)\Phi(\xi-S+C)+\dl_2 Q_0-\dl_1 Q_0$.
By $\Phi_i<1$  and \eqref{2C.b}, there holds
\begin{align*}
\mathscr L_i[\underline U](\xi)
=&D_i h''(\xi)\Phi_i(\xi-S+C)+2D_i h'(\xi)\Phi_i'(\xi-S+C)\\
 &+D_ih(\xi)\Phi_i''(\xi-S+C)+F_i(\underline U(\xi))\\
\geq& -\bar D \left(\|\hat h''\|_{L^\i(\R)}+2\|\hat h'\|_{L^\i(\R)}\max_{i=1,\cdots,m}\|\Phi_i'\|_{L^\i(\R)}\right)\\
&+c  h(\xi)\Phi_i'(\xi-S+C)-  h(\xi)F_i(\Phi(\xi-S+C))+F_i(\underline U(\xi)).
\end{align*}
 By \eqref{2C.7}, \eqref{S1}, $F_i(\bm0)=0$ and the mean value theorem, there exist $\tau\in(0,1)$ such that
  \begin{align*}
F_i(\Phi(\xi-S+C))
=\sum_{j=1}^m\frac{\p F_i}{\p u_j}\left(\tau\Phi(\xi-S+C)\right)\Phi_j(\xi-S+C)
\leq\dl_2\Lambda q_*.
\end{align*}
It follows from \eqref{2C.2}, \eqref{d1}, \eqref{S1} and $h\leq1$ that
$
\bm0\leq \underline U(\xi)\leq\dl_2 q_*\cdot\bm1+\dl_2 q^*\cdot\bm1\leq2\dl q^*\cdot\bm1\leq 4\vp_0\cdot\bm1$.
Then, by $F_i(\bm0)=0$, $0\leq h\leq1$, \eqref{2C.3},   \eqref{2C.7}, \eqref{d1}, \eqref{S1}  and the mean value theorem  that there exists $\tau\in(0,1)$ such that
  \begin{align*}
F_i(\underline U(\xi))
=&\sum_{j=1}^m\frac{\p F_i}{\p u_j}\left(\tau\e(h(\xi)\Phi(\xi-S+C)+\dl_2 Q_0-\dl_1 Q_0\r)\right)\e(h(\xi)\Phi_j(\xi-S+C)-(\dl_1-\dl_2) q_j^0\r)\\
\geq&-\dl_2 \Lambda q_*-\sum_{j=1}^m\mu_{ij}^0(\dl_1-\dl_2)q_j^0\\
\geq&-\dl_2 \Lambda q_*+\frac{1}{2}\varpi\dl_1 q_*.
\end{align*}
Together with $\Phi_i'>0$, $0\leq h\leq1$, \eqref{d1} and \eqref{S2} that
\begin{align*}
\mathscr L_i[\underline U](\xi)
\geq& -\dl_2\bar D -\dl_2\Lambda q_*
-\dl_2 \Lambda q_*+\frac{1}{2}\varpi\dl q_*\geq0.
\end{align*}

If $S-S_0+S_2\leq\xi\leq S-2C$, then $\xi-S+S_0\geq S_2$ and $\xi-S+C\leq -C$.
By \eqref{2C.a}, \eqref{S1} and  \eqref{hh}, there holds
  $$
  \bm0\leq\Phi(\xi-S+C)\leq\underline U(\xi)=\Phi(\xi-S+C)+\dl_2 Q_0-\dl_1 Q_0\leq \dl_2 q_*\cdot\bm1+\dl_2 q^*\cdot\bm1\leq 4\vp_0\cdot\bm1.
  $$
Since $\Phi_i'>0$, it follows from  \eqref{2C.3} and  \eqref{d1} that
\begin{align*}
\mathscr L_i[\underline U]
=& c\Phi_i'(\xi-S+C)
-F_i(\Phi(\xi-S+C))+F_i(\underline U(\xi))
\geq\varpi\e(\dl_1-\dl_2\r) q_i^0
\geq0.
\end{align*}

If $S-2C\leq\xi\leq S$, then $\xi-S+S_0\geq S_1+S_2-C\geq S_2 $ and $-C\leq\xi-S+C\leq C$.
By \eqref{2C.2}, \eqref{d1} and \eqref{hh}, one has
$$
\bm0\leq\underline U(\xi)=\Phi(\xi-S+C)+\dl_2 Q_0-\dl_1 Q(\xi-S+C)\leq\bm1+\dl_2 q^*\cdot\bm1-\dl_1 q_*\cdot\bm1\leq\bm1.
$$
From \eqref{2C.2}, \eqref{2C.7}, \eqref{2C.b}, \eqref{kap}  and \eqref{d1},
 there holds
\begin{align*}
\mathscr L_i[\underline U](\xi)
=&\ c\Phi_i'(\xi-S+C)-D_i\dl_1q_i''(\xi-S+C)
-F_i(\Phi(\xi-S+C))+F_i(\underline U(\xi))\\
\geq&\ c\kappa-\bar D\dl_1 M-\dl_2\Lambda q^*-\dl_1\Lambda q^*\\
\geq&\ 0.
\end{align*}

If $\xi\geq S$, then $\xi-S+S_0\geq S_0\geq S_2 $ and $\xi-S+C\geq C\geq1$. By \eqref{2C.a}, \eqref{defc} and \eqref{hh},  there holds $$
(1-4\vp_0)\cdot\bm1\leq(1-\dl q_*)\cdot\bm1\leq\underline U(\xi)=\Phi(\xi-S+C)+\dl_2 Q_0-\dl_1 Q_1\leq\bm1+\dl q^*\cdot\bm1\leq\bm1+4\vp_0\cdot\bm1.
$$
By $\Phi_i'>0$, \eqref{2C.3}, \eqref{2C.7}, \eqref{d1} and the mean value theorem,
there exists $\tau\in(0,1)$ such that
\begin{align*}
\mathscr L_i[\underline U]
=&\ c\Phi_i'(\xi-S+C)
-F_i(\Phi(\xi-S+C))+F_i(\underline U(\xi))\\
\geq&\ \sum_{j=1}^m\frac{\p F_i}{\p u_j}\left(\Phi(\xi-S+C)+\tau\e(\dl_2 Q_0-\dl_1 Q_1\r)\right)\e(\dl_2 q_j^0-\dl_1 q_j^1\r)\\
\geq&\ -\delta_2\Lambda q^*+\dl_1 \varpi q_*\\
\geq&\ 0.
\end{align*}
As a conclusion, the function $\underline U$ is a subsolution of \eqref{Uueq} on $[0,+\i)$.

{\it Step 3: the existence of increasing solution.} Let $U(t,\xi)=(U_1(t,\xi),\cdots,U_m(t,\xi))$ be the classical solution of
the following parabolic initial-boundary problem
 \begin{align}\label{utx}
\bc
U_t(t,\xi)=DU_{\xi\xi}(t,\xi)+F(U(t,\xi)), &t>0,\ \xi>0,\\
U(t,0)=\bm0,\ U(t,+\i)=\bm1+\dl_2 Q_0-\dl_1 Q_1,&t\geq0,\\
U(0,\xi)=\underline U(\xi),& \xi>0.
\ec
\end{align}
In terms of $\bm0\leq\underline U(\xi)\leq\bm1$ for all $\xi\geq0$,
it follows from the comparison principle that $\bm0\leq U(t,\xi)\leq\bm1$ for all $t\geq0$ and $\xi\geq0$. Since $\underline U(\xi)$ is a subsolution of the corresponding elliptic problem of \eqref{utx}, another application of the comparison principle yields that
$U(t,\xi)$ is nondecreasing in $t$ for $\xi\geq0$. By virtue of standard parabolic estimates, there holds
$$
U(t,\xi)\to U(\xi) \ \text{ uniformly in }\xi\in\R \text{ as }t\to+\i,
$$
where $U=(U_1,\cdots,U_m)\in C^2(\mathbb R^+,[\bm0,\bm1])$ is a classical solution of \eqref{Uueq}.

We now turn to prove that $U'(\xi)\gg\bm0$ for all $\xi\geq0$.
 By \eqref{2C.2} and $\Phi'\ll\bm0$, one has $\underline U'(\xi)\geq\bm0$ for $\xi\geq\bm0$.
 Then, for any $\xi_1>\xi_2\geq0$, then holds $\underline U(\xi_1)\geq \underline U(\xi_2)$.
  Applying the comparison principle, one gets that $U(t,\xi_1)\geq U(t,\xi_2)$ for all $t\geq0$,
   that is, $U(t,\xi)$ is nondecreasing in $\xi\geq0$ for all $t\geq0$.
Therefore, $U(\xi)$ is also nondecreasing in $\xi\geq0$.
 Assume to the contrary that there exists $i\in\{1,\cdots,m\}$ and $0\leq\xi_1<\xi_2$ such that $U_i(\xi_1)=U_i(\xi_2)$.
  Let $s=\xi_2-\xi_1>0$ and $W(\xi)=U(\xi+s)-U(\xi)$ for all $\xi\geq0$. Then $W\geq\bm0$ on $[0,+\i)$
  and the $i$-th component $W_i(\xi_1)=U_i(\xi_2)-U_i(\xi_1)=0$.
  According to the maximum principle, there holds $W_i(\xi)\equiv0$ for all $\xi\geq0$,
  it implies that $U_i(\xi+s)=U_i(\xi)$ for all $\xi\geq0$.
  By induction, one has $U_i(\xi+ks)=U_i(\xi)$ for all $\xi\geq0$ and $k\in \mathbb N$. Set $\xi=0$. By passing $k\to+\i$,  one reaches a contradiction with $U(0)=\bm0$ and $U(+\i)=\bm1+\dl_2Q_0-\dl_1Q_1$. Thus, $U'(\xi)\gg\bm0$ for all $\xi\geq0$.
 Since $\dl$ is arbitrary, one concludes that $U$ is a classical solution of \eqref{Ueq}. The proof is thereby complete.
\end{pr}

\begin{pr}[Proof of Theorem \ref{lcomp}]
Let $u(t,x)$ be the entire solution obtained in Theorem \ref{t1}.
We already have shown in Theorem \ref{t1} that
$u(t,x)\to u_\i(x)$ as $t\to+\i$ locally uniformly in $x\in\bar\Omega$ and $u_\i(x)\to\bm1$ as $|x|\to+\i$,
where  $u_\i=(u_\i^1,\cdots,u_\i^m)$ is  classical solution of \eqref{ui}.
It suffices to prove that $u_\i\equiv\bm1$ in $\bar\Omega$ under the conditions of Theorem \ref{lcomp}.
We divided the proof into two steps.

{\it Step 1: star-shape obstacles.} Up to a shift of the origin $\bm\theta$ in $\R^N$, one can assume that the obstacle $K$ (if not empty) is star-shaped with respect to $\bm\theta$.
Take any
\begin{align}\label{dll}
0<\dl< \frac{4\vp_0q_*}{q^*}.
\end{align}
Take $r_0>0$ large enough such that $K\subset B(\bm\theta,r_0)$ and
\begin{align}\label{udl}
u_\i(x)\geq(1-\dl q_*)\cdot\bm1\ \ \text{ for all }|x|\geq r_0.
\end{align}
Let $U$ be the function obtained in Lemma \ref{Uxi}. We claim that
\begin{align}\label{claU}
u_\i(x)\geq U(|x|-r_0)\ \  \text{ for all }|x|\geq r_0.
\end{align}

{\it Proof of claim \eqref{claU}.} Define
\begin{align*}
\vp_*=\inf\left\{\vp>0:u^\vp_\i(x)=u_\i(x)+\vp \dl Q_1\geq U(|x|-r_0)\  \text{ for all }|x|\geq r_0\right\}.
\end{align*}
Clearly, $\vp_*\geq0$.
Assume to the contrary that $\vp_*>0$.
In terms of \eqref{2C.2} and \eqref{udl}, one has
 $u^{\vp}_\i(x)\geq (1-\dl q_*)\cdot\bm1+\vp\dl q_*\cdot\bm1\geq\bm1\geq U(|x|-r_0)$ for $|x|\geq r_0$ if $\vp\geq1$.
 Hence $0<\hat\vp_*<1$.
By the definition of $\vp_*$, there exists an index $s\in\{1,\cdots,m\}$ and sequences  $(\vp_n)_{n\in\mathbb N}\subset\R_+$ and $(x_n)_{n\in\mathbb N}\subset\R^N$ such that
$\vp_n\to\vp_*$ as $n\to+\i$, $|x_n|\geq r_0$ and
\begin{align*}
u^{\vp_n,s}_\i(x_n)=u_\i^s(x_n)+\vp_n \dl q^1_s< U_s(|x_n|-r_0)\  \text{ for all }n\in \mathbb N.
\end{align*}
Since $u_\i(x)\to\bm1$ and $U(|x|-r_0)\to\bm1$ as $|x|\to+\i$,
it follows that $x_n$ is bounded for all $n\in\mathbb N$.
Up to extraction of a subsequence,
one can assume that $x_n\to\bar x$ as $n\to+\i$.
It is easy to see that $|\bar x|\geq r_0$.
Hence,  $u^{\vp_*,s}_\i(\bar x)=u_\i^s(\bar x)+\vp_* \dl q_s^1\leq U_s(|\bar x|-r_0)$. By the definition of $\vp_*$, one has
\begin{align}\label{barx}
u^{\vp_*,s}_\i(\bar x)
= U_s(|\bar x|-r_0).
\end{align}

By \eqref{2C.2}, \eqref{dll}, \eqref{udl} and $\vp_*\leq1$, there holds
$$
(1-4\vp_0)\cdot\bm1 \leq(1-\dl q_*)\cdot\bm1\leq
u^{\vp_*}_\i(x)=u_\i( x)+\vp_* \dl Q_1
\leq(1+\dl q^*)\cdot\bm1\leq(1+4\vp_0)\cdot\bm1
$$
for all $|x|\geq r_0$.
From \eqref{2C.3},  one gets that
\begin{align}\label{uvpi}
D_i \Delta u_\i^{\vp_*,i}(x)+F_i(u_\i^{\vp_*}(x))
=-F_i(u_\i(x))+F_i(u_\i^{\vp_*}(x))
\leq\sum_{j=1}^m\mu_{ij}^1\vp_*\dl q_j^1
\leq-\vp_*\dl \varpi q_i^1
<0
\end{align}
for all $|x|\geq r_0$ and for each $i=1,\cdots,m$.
Define the function $\tilde U=(\tilde U_1,\cdots,\tilde U_m)$ by $\tilde U(x)=U(|x|-r_0)$.
Thanks to Lemma \ref{Uxi},
\begin{align}\label{Uvpi}
D_i\Delta \tilde U_i(x)+F_i(\tilde U(x))
=D_i U_i''(|x|-r_0)+D_i\frac{N-1}{|x|}U_i'(|x|-r_0)+F_i(U(|x|-r_0))>0
\end{align}
 for all $x\in \R^N\backslash B(\bm\theta,r_0)$ and  for each $i=1,\cdots,m$.
 It is easy to see that $U(|\cdot|-r)$ is a strict subsolution of \eqref{1.1} in $\{x\in\mathbb R^N:|x|>\max(r,0)\}$ for all $r\in\R$.
 Define the function $z=(z_1,\cdots,z_m)$ by $z=u_\i^{\vp_*}-\tilde U$ in $\R^N\backslash B(\bm\theta,r_0)$.
By the definition of $\vp_*$,
 one has $z(x)\geq\bm 0$ for $|x|\geq r_0$.
From {(A4)}, \eqref{uvpi}, \eqref{Uvpi} and the mean value theorem, there exists $\tau\in(0,1)$ such that
 \begin{align*}
 D_s\Delta z_s(x)\leq F_s(\tilde U(x))-F_s(u_\i^{\vp^*}(x))
 &=-
 \sum_{j=1}^m \frac{\p F_s}{\p u_j}\e(u_\i^{\vp_*}(x)-\tau z(x)\r)z_j(x)\\
& \leq-\frac{\p F_s}{\p u_s}\e(u_\i^{\vp_*}(x)-\tau z(x)\r)z_s(x)
 \end{align*}
for  $|x|\geq r_0$. It follows from \eqref{barx} that
  $z_s(\bar x)=0$.
  Applying the strong elliptic maximum principle, one has $z_s(x)\equiv 0$ for $|x|\geq r_0$.
   Hence $u_\i^{\vp_*,s}(x)=\tilde U_s(x)$ for all $ |x|\geq r_0$. It is impossible for $|x|=r_0$ since $U_s(0)$=0. The proof of claim \eqref{claU} is   complete.

 By \eqref{claU} and  $U'\gg\bm0$ in $[0,+\i)$, one obtains that
  \begin{align*}
  u_\i(x)\geq U(|x|-r)\ \ \text{ for all  }r\geq r_0 \text{ and  }x\in\bar\Omega\text{ such that } |x|\geq r
.
\end{align*}
Define
$$
r^*=\inf\e\{r\in\mathbb R: u_\i(x)\geq U(|x|-r)\text{ for all }x\in\bar\Omega
\text{ such that }|x|\geq r\r\}.
$$
By \eqref{claU}, one has $r^*\leq r_0$.
We now turn to prove $r^*=-\i$.
Assume that  $r^*>-\i$. By continuity, there holds
\begin{align*}
 u_\i(x)\geq U(|x|-r^*)\ \ \text{ for all }x\in\bar\Omega
\text{ such that }|x|\geq r^*.
\end{align*}
 Two cases may occur.

{\it Case 1: $\inf\left\{u_\i(x)-U(|x|-r^*):x\in\bar\Omega, \ r^*\leq |x|\leq r_0\right\}\gg\bm0$.}
In this case, by continuity of $ u_\i$ and $U$, there exists $r_*<r^*$ such that
$$
 u_\i(x)\geq U(|x|-r^*)\ \ \text{ for all }x\in\bar\Omega
\text{ such that }r_*\leq|x|\leq r_0.
$$
In particular, $u_\i(x)\geq U(|x|-r^*)$ for all $|x|=r_0$.
By the same arguments as the proof of \eqref{claU}, we get that
$$
u_\i(x)\geq U(|x|-r_*)\ \ \text{ for all }\ \ |x|\geq r_0.
$$
Hence, there holds
$u_\i(x)\geq U(|x|-r^*)$ for all $x\in\bar\Omega$
 such that $|x|\geq r_*$, which contradicts the minimality of $r^*$. Thus, case 1 is ruled out.

{\it Case 2: $\inf\left\{u_\i^i(x)-U_i(|x|-r^*):x\in\bar\Omega,\  r^*\leq |x|\leq r_0\right\}=0$ for some $ i\in\{1,\cdots,m\}$.} By continuity, there exists $\bar x\in\bar\Omega$ such that
$$
u_\i^i(\bar x)=U_i(|\bar x|-r^*)\ \ \text{ and }
\ \ r^*\leq|\bar x|\leq r_0.
$$
Since $U(0)=\bm0$ and $u_\i\gg\bm0$ in $\bar\Omega$, one further gets that $|\bar x|>r^*$.

 Assume first that $\bar x\in\Omega$ and $|\bar x|>0$.
 Note that the set $\Omega\cap\{x\in\R^N:|x|\geq r^*\}$ is connected.
 For $x\in\bar\Omega$, define the function $z=(z_1,\cdots,z_m)$ by $z(x)=u_\i(x)-U(|x|-r^*)$.
 Then $z(x)\geq\bm0$ for $x\in\bar\Omega$ such that $|x|\geq r^*$ and $z_i(\bar x)=0$.
 Applying (A4)  and the strong elliptic maximum principle, one gets that $z_i(x)\equiv0$ for $x\in\bar\Omega$ such that   $|x|\geq r^*$,
 that is, $u_\i^i(x)\equiv U_i(|x|-r^*)$. But $U(|x|-r^*)$ is a strict subsolution of \eqref{1.1} for $x\in\bar\Omega$ such that
  $|x|\geq r^*$, a contradiction.

 Therefore, either  $|\bar x|=0$ or $\bar x\in\p\Omega$.
 In the first case, one gets that $r^*<0$ and $\Omega=\R^N$ since the obstacle $K$ is star-shaped with respect to $\bm\theta$ if it is not empty.
 Since $u_\i^i\in C^1(\mathbb R^N)$ satisfies $u_\i^i(x)\geq U_i(|x|-r^*)$ for $x\in\R^N$ and $U_i'>0$ in $[0,+\i)$, it follows that
 $$
 u_\i^i(\bm\theta)=U_i(-r^*)<U_i(|x|-r^*)\leq u_\i^i(x)\ \text{ for all $|x|>0$}.
 $$
 Hence $\nabla u_\i^i(\bm\theta)=\bm0$.
 But since $U_i'(-r^*)>0$, there exists $\sigma>0$ such that
 $u_\i^i(x)<U_i(|x|-r^*)$ for $x\in\R^N$ such that $0<|x|\leq\sigma$, a contradiction.

 Therefore, $\bar x\in\p\Omega$ (then $K\neq\emptyset$ and $|\bar x|>0$) and
 $$
 u_\i^i( x)>U_i(|x|-r^*)\ \ \text{ for all }
x\in\Omega \text{ such that }|x|\geq r^*.
 $$
 For all $x\in\bar\Omega$, define the function $z=(z_1,\cdots,z_m)$ by $z(x)=u_\i(x)-U(|x|-r^*)$.
 Then $z(x)\geq\bm0$ for $x\in\bar\Omega$ such that $|x|\geq r^*$.
  According to $z_i(\bar x)=0$ and the Hopf boundary lemma, there holds
 $ \nu(\bar x)\cdot \nabla z_i(\bar x)<0$.
 Hence,
 $$
 0=\nu(\bar x)\cdot \nabla u_\i^i(\bar x) <\nu(\bar x)\cdot \nabla (U_i(|\bar x|-r^*))=\e( \nu(\bar x)\cdot\frac{\bar x}{|\bar x|}\r)\times U_i'(|\bar x|-r^*).
 $$
 However, since the obstacle $K$ is star-shaped with respect to $\bm\theta$, one has $\nu(\bar x)\cdot\bar x=-\nu_K(\bar x)\cdot\bar x\leq0$. Together with $U_i'>0$, one has $\nu(\bar x)\cdot \nabla (U_i(|\bar x|-r^*))\leq0$, a contradiction.

 As a conclusion, $r^*=-\i$, which implies that $u_\i(x)\geq U(|x|-r)$ for all $r\in\R$ and $x\in\bar\Omega$ with $|x|\geq r$.
 Since $U(+\i)=\bm1$, by taking $r\to-\i$, there holds $u_\i(x)\geq \bm1$ for all $x\in\bar\Omega$, hence $u_\i\equiv\bm1$ in $\bar\Omega$.

{\it Step 2: directionally convex obstacles}. Assume that the obstacle $K$ is directionally convex with respect to a hyperplane $P=\{x\in\R^N:x\cdot \tilde e=l\}$, where $\tilde e\in\mathbb S^{N-1}$  and $l\in\R$. Since $K$ is compact and $u_\i(x)\to\bm1$ as $|x|\to+\i$, there exists $r_0>0$ large enough such that
$K\subset\{x\in\R^N:|x\cdot \tilde e-l|\leq r_0\}$ and $u_\i(x)\geq (1-\dl q_*)\cdot\bm1$ for all
$x\in\R^N$ such that $|x\cdot \tilde e-l|\geq r_0$. It is easy to check that the function
$x\mapsto U(|x\cdot\tilde e-l|-r_0)$ is a stationary solution of \eqref{1.1} in the domain
$\{x\in\R^N:|x\cdot\tilde e-l|>r_0\}$.  By some similar arguments as  the proof of claim \eqref{claU}, one gets that
$$
u_\i(x)\geq U(|x\cdot\tilde e-l|-r_0)\ \ \text{ for all  }x\in\R^N\text{ such that }|x\cdot \tilde e-l|\geq r_0.
$$

Define
$$
r^*=\inf\e\{r\in\mathbb R: u_\i(x)\geq U(|x\cdot\tilde e-l|-r)\text{ for all }x\in\bar\Omega
\text{ such that }|x\cdot\tilde e-l|\geq r\r\}.
$$
Clearly,  $r^*\leq r_0$. We now turn to prove $r^*=-\i$.
Assume that  $r^*>-\i$. By continuity, there holds
\begin{align*}
 u_\i(x)\geq U(|x\cdot\tilde e-l|-r^*)\ \ \text{ for all }x\in\bar\Omega
\text{ such that }|x\cdot\tilde e-l|\geq r^*.
\end{align*}
 Two cases may occur.

{\it Case 1: $\inf\left\{u_\i(x)-U(|x\cdot\tilde e-l|-r^*):x\in\bar\Omega, \ r^*\leq |x\cdot\tilde e-l|\leq r_0\right\}>0$.} Similar to Case 1 in Step 1, there exists $r_*<r^*$ such that $u_\i(x)\geq U(|x\cdot\tilde e-l|-r_*)$ for all $x\in\bar\Omega$
 such that $|x\cdot\tilde e-l|\geq r_*$, a contradiction with the minimality of $r^*$. Hence, case 1 is impossible.

 {\it Case 2: $\inf\left\{u_\i^i(x)-U_i(|x\cdot\tilde e-l|-r^*):x\in\bar\Omega, \ r^*\leq |x\cdot\tilde e-l|\leq r_0\right\}=0$ for some
 $i=1,\cdots,m$.}
 Then there exists a sequence $(x_n)_{n\in \mathbb N}\subset\bar\Omega$ such that
 $$
 u_\i^i(x_n)-U_i(|x_n\cdot\tilde e-l|-r^*)\to0\text{  as }n\to+\i
 $$
 and
 $$
 r^*\leq |x_n\cdot\tilde e-l|\leq r_0\ \ \text{ for all }n\in\mathbb N.
 $$
 Up to extraction of a subsequence, two cases may occur, either $x_n\to\bar x\in\bar\Omega$ or $|x_n|\to+\i$ as $n\to+\i$.

{\it Subcase 1: $x_n\to\bar x\in\bar\Omega$ as $n\to+\i$.} In this case, one has
$$
 u_\i^i(\bar x)= U_i(|\bar x\cdot\tilde e-l|-r^*)\ \text{  and }\
  r^*\leq |\bar x\cdot\tilde e-l|\leq r_0.
 $$
 Since $U(0)=\bm0$ and $u_\i\gg\bm0$, one has $|\bar x\cdot\tilde e-l|>r^*$.

 Assume first that $\bar x\in\Omega$ and $|\bar x\cdot\tilde e-l|>0$. Then $\bar x\notin P$.
 Note that the sets $\Omega_\pm=\{x\in\Omega:\pm (x\cdot\tilde e-l)>\max(r^*,0)\}$ are connected since $K$ is directional convex with respect to the hyperplane $P$.
 Let $\tilde\Omega\in\{\Omega_-,\Omega_+\}$ be the connected set containing $\bar x$.
 Denote $z(x)= u_\i( x)-U(|x\cdot\tilde e-l|-r^*)$ in $\tilde\Omega$.
 Since $z(x)\geq\bm0$ for $x\in\tilde\Omega$ and $z_i(\bar x)=0$,
  the strong elliptic maximum principle and (A4) give rise to $z_i\equiv0$ in $\tilde\Omega$,
  that is, $u_\i^i( x)=U_i(|x\cdot\tilde e-l|-r^*)$  for $x\in\tilde\Omega$.
 In particular, choose a sequence of points $(\tilde x_n)_{n\in\mathbb N}\subset\tilde\Omega$
 satisfying $|\tilde x_n|\to+\i$ as $n\to+\i$, and $|\tilde x_n\cdot \tilde e-l|=\max(r^*+1,1)$.
  By the property of $u_\i$, it follows  that $u_\i^i(\tilde x_n)\to1$ as  $n\to+\i$.
  But since $U'\gg\bm0$ and $U(+\i)=\bm1$, one gets that
  $U_i(|\tilde x_n\cdot\tilde e-l|-r^*)\to U_i(\max(r^*+1,1)-r^*)<1$ as $n\to+\i$, a contradiction.

 Therefore,  either $|\bar x\cdot\tilde e-l|=0$ or $\bar x\in\p \Omega$ and $|\bar x\cdot\tilde e-l|>0$.
 In the first case, there holds $r^*<0$.
 Note that $\bar x+\mathbb R \tilde e\in\bar\Omega$ since $K$ is directionally convex with respect to the hyperplane $P$.
 Since $u_\i^i\in C^1(\bar\Omega)$ satisfies $u_\i^i(x)\geq U_i(|x\cdot\tilde e-l|-r^*)$ for $x\in\bar\Omega$,
 and $U'(-r^*)>0$, it follows that
 $$
 u_\i^i(\bar x)=U_i(-r^*)<U_i(|x\cdot\tilde e-l|-r^*)\leq u_\i^i(x)\ \text{ for }x\in\bar\Omega \text{ such that }
 |x\cdot\tilde e-l|>0.
 $$
Hence, $\nabla u_\i^i(\bar x)=\bm\theta$.
But since  $U'(-r^*)>0$, there exists $\sigma_0>0$ such that
 $$
 u_\i^i(\bar x+\sigma \tilde e)< U _i(|(\bar x+\sigma\tilde e)\cdot\tilde e-l|-r^*)
 \ \text{ for all }0<\sigma<\sigma_0.
 $$
It is a contradiction.

Therefore, $\bar x\in\p \Omega$ (then $K\neq\emptyset$) and $|\bar x\cdot\tilde e-l|>0$. Then,
 $ u_\i^i( x)> U_i(| x\cdot\tilde e-l|-r^*)$  for all $x\in\Omega$  such that
   $| x\cdot\tilde e-l|\geq r^*$.
Define the function $z=(z_1,\cdots,z_m)$ by $z(x)=u_\i(x)-U(|x\cdot\tilde e-l|-r^*)\geq0$.
 Since $z_i(\bar x)=0$ and  $z_i>0$ in $\Omega$, one infers from the Hopf boundary lemma that
 $ \nu(\bar x)\cdot \nabla z_i(\bar x)<0$.
 Hence,
 $$
 0<-\nu(\bar x)\cdot \nabla z_i(\bar x)=\nu(\bar x)\cdot \nabla (U_i(|\bar x\cdot\tilde e-l|-r^*))=\e( \nu(\bar x)\cdot\tilde e\r)\times\frac{\bar x\cdot\tilde e-l}{|\bar x\cdot\tilde e-l|}U_i'(|\bar x\cdot\tilde e-l|-r^*).
 $$
 However, since the obstacle $K$ is directionally convex with respect to the hyperplane $P$, one has  $\nu(\bar x)\cdot\tilde e\geq0$ when $\bar x\cdot\tilde e-l\leq0$ and  $\nu(\bar x)\cdot\tilde e\leq0$ when $\bar x\cdot\tilde e-l>0$. Together with $U'>0$, one has $\nu(\bar x)\cdot \nabla (U_i(|\bar x|-r^*))\leq0$, a contradiction. Hence, the subcase 1 is ruled out.

 {\it Subcase 2: $|x_n|\to+\i$ as $n\to+\i$.} By $u_\i^i(x)\to1$ as $|x|\to+\i$, there holds $u_\i^i(x_n)\to1$ as  $n\to+\i$.
 On the other hand, $\limsup_{n\to+\i} U(|x_n\cdot\tilde e-l|-r^*)<1$ since $r^*\leq|x_n\cdot\tilde e-l|\leq r_0$. That is a contradiction.

 As a result, $r^*\to-\i$.  Hence, $u_\i(x)\geq U(|x\cdot\tilde e-l|-r)$ for all $r\in\R$ and $x\in\bar\Omega$ with $|x\cdot\tilde e-l|\geq r$.
 Since $U(+\i)=\bm1$, by taking $r\to-\i$, there holds $u_\i(x)\geq \bm1$ for all $x\in\bar\Omega$, hence $u_\i\equiv\bm1$ in $\bar\Omega$.

 {\it  Step 3: complete propagation.} When $K$ is a star-shaped or a directionally convex obstacle, one concludes from Theorem \ref{t1}, step 1 and step 2 that $u(t,x)\to\bm1$ locally uniformly in $x\in\bar\Omega$ as $t\to+\i$, that is, $u$ propagates completely in the sense of \eqref{comp}.
 The proof is complete.
\end{pr}

\section{Application}\label{s6}
The main results of this paper can be applied to a wide range of reaction-diffusion systems, such as a buffering bistable system,
a Lotka-Volterra competition-diffusion model with spatiotemporal delays and a system of $n$ obligate mutualist species.
For specific details, please refer to \cite[Section 5]{wangzhicheng2012} and \cite[Section 4]{SW18}.
In this section, we provide an illustrative application using the Lotka-Volterra competition-diffusion  system as an example.

Consider the following Lotka-Volterra competition-diffusion system with two-components
\begin{align}\label{clv}
\begin{cases}
  (u_1)_t=\Delta u_1 +u_1 (1-u_1 -k_1u_2 ) &\text{ in } \R\times\Omega, \\
(u_2)_t=d\Delta u_2 +ru_2 (1-u_2 -k_2u_1 )  &\text{ in } \R\times\Omega, \\
\nu\cdot\nabla u_1=0,\ \nu\cdot\nabla u_2=0  &\text{ on } \R\times\p\Omega,
\end{cases}
\end{align}
where  $k_1$, $k_2$, $r$ and $d$ are positive constants. The variables $u(t,x)$ and $v(t,x)$
stand for the population density of two competing species. We assume that
$$
k_1>1\text{ and }k_2>1.
$$
It is easy to check that system \eqref{clv} has two stable equilibria $(0, 1)$ and $(1, 0)$.
By \cite[Theorem 2.1]{Kan-on}, system \eqref{clv} admits a  planar traveling front $\Phi(\xi)=(\Phi_1(\xi),\Phi_2(\xi))$
connecting $(0,1)$ and $(1,0)$, where $\xi=x\cdot e+c t$, $c\in\R$,  $e\in \mathbb S^{N-1}$.
In particular, the planar traveling front $\Phi(\xi)$ is unique (up to shifts)
and satisfies $\Phi_1'>0$ and $\Phi_2'<0$ in $\R$.

Assume that $c>0$. In fact, following \cite[Theorems 4.2 and 4.3]{MHO} and \cite[Theorems 3.1 and 3.3]{MZYO}, we know that the wave speed $c$ is positive if one of the following conditions is satisfied:

\noindent (P1) $1<\frac{dk_1}{d-r(k_2-1)}<\frac{2(k_2-1)}{k_2}$;

\noindent (P2) $\frac{r+d(k_1-1)}{k_2r}<3-2k_1$;

\noindent (P3) $k_2>2$, there exists an integer $n\geq 2$ such that $1<k_1<1+\frac{n}{(n-1)(2n-1)}$, and either
$\frac{2d(k_1-1)(n-1)^2}{k_2n^2}<r<\frac{d(k_1-1)(n-1)^2}{n^2}$
or
$\frac{d(k_1-1)(n-1)^2}{k_2n^2}<r<\frac{2d(k_1-1)(n-1)^2}{n^2}$;

\noindent (P4) $\frac{2r+4d(k_1-1)}{rk_2}<3-k_1$.

Set $u^*_1=u_1$ and $u_2^*=1-u_2$, the system \eqref{clv} reduces to (for the sake
of simplicity, we drop the symbol $*$)
\begin{align}\label{clv1}
\begin{cases}
  (u_1)_t  =\Delta u_1  +u_1  (1-k_1-u_1  +k_1u_2  )&\text{ in } \R\times\Omega, \\
(u_2)_t =d\Delta u_2  +r(1-u_2  )(k_2 u_1  -u_2  )&\text{ in } \R\times\Omega, \\
\nu\cdot \nabla u_1  =0,\ \nu\cdot u_2  =0&\text{ on } \R\times\Omega.
\end{cases}
\end{align}
It is easy to see that the equilibria $(0,1)$ and $(1,0)$ change into $(0,0)$ and $(1,1)$, respectively.
Let $\Psi_1(\xi)=\Phi_1(\xi)$ and $\Psi_2(\xi)=1-\Phi_2(\xi)$.
Then $\Psi(\xi)=(\Psi_1(\xi),\Psi_2(\xi))$ is a traveling wave front of \eqref{clv1} connecting $(0,0)$ and $(1,1)$.
In particular, $\Psi_1'>0$ and $\Psi_2'>0$ in $\R$.
It is not difficult to verify that (A1)-(A4) hold for system \eqref{clv1}
with
$$
R_0=(1,2k_2),\ R_1=(2k_1,1),\ \lambda_0=\frac{1}{2}\min\left(\frac{r}{2},k_1-1\right)\text{ and }
\lambda_1=\frac{1}{2}\min\left(\frac{1}{2},r(k_2-1)\right).
$$
Furthermore, it follows from \cite[Lemmas 2.1 and 2.2]{mt} that there exist constants $\lambda>0$, $\mu<0$ and vectors $\alpha^\pm=(\alpha^\pm_1,\alpha^\pm_2)\in \R^2_+$ such that
\begin{align*}
&\lim_{\xi\to-\i}(\Psi_1(\xi),\Psi_2(\xi)) e^{-\mu\xi}=(\alpha_1^-,\alpha_2^-),\
\lim_{\xi\to+\i}(1-\Psi_1(\xi),1-\Psi_2(\xi)) e^{-\lambda\xi}=(\alpha_1^+,\alpha_2^+),\\
&\lim_{\xi\to-\i}(\Psi_1'(\xi),\Psi_2'(\xi)) e^{-\mu\xi}=\mu(\alpha_1^-,\alpha_2^-),\
\lim_{\xi\to+\i}(-\Psi_1'(\xi),-\Psi_2'(\xi)) e^{-\lambda\xi}=\lambda(\alpha_1^+,\alpha_2^+),\\
&\lim_{\xi\to-\i}(\Psi_1''(\xi),\Psi_2''(\xi)) e^{-\mu\xi}=\mu^2(\alpha_1^-,\alpha_2^-),\
\lim_{\xi\to+\i}(-\Psi_1''(\xi),-\Psi_2''(\xi)) e^{-\lambda\xi}=\lambda^2(\alpha_1^+,\alpha_2^+).
\end{align*}
Hence, there exists $k>0$ and $C>0$ such that
\begin{align*}
-k\Psi_1'(\xi)\leq\Psi_1''(\xi)\leq k\Psi_1'(\xi) \text{ and }
-k\Psi_2'(\xi)\leq\Psi_2''(\xi)\leq k\Psi_2'(\xi)\ \
\text{ for }\xi\in\R
\end{align*}
and
\begin{align*}
\Psi_1''(\xi)\leq 0\text{ and }\Psi_2''(\xi)\leq 0\ \ \text{ for }\xi\geq C.
\end{align*}

By Theorem \ref{t1}, system \eqref{clv1} admits a unique time-increasing entire solution
$v=(v_1,v_2)$ satisfying
$v(t,x)-\Psi(x\cdot e+ct)\to(0,0)$  uniformly in $x\in\bar\Omega$ as $t\to-\i$.
Theorems \ref{t2} and \ref{lcomp} imply that, if the obstacle $K$ is star-shaped or directionally convex, then $v(t,x)\to(1,1)$ locally uniformly in $x\in\bar\Omega$ as $t\to+\i$, and
$v(t,x)-\Psi(x\cdot e+ct)\to(0,0)$ uniformly in $x\in\bar\Omega$  as $t\to+\i$.
Consequently, system \eqref{clv} admits a unique entire solution
$u=(u_1,u_2)$ satisfying
\begin{align*}
(u_1(t,x),u_2(t,x))-(\Phi_1(x\cdot e+ct),\Phi_2(x\cdot e+ct))\to(0,0)\ \text{ uniformly in }x\in\bar\Omega \text{ as }t\to-\i,
\end{align*}
where $u_1$ is time-increasing and $u_2$ is time-decreasing.
Furthermore, if the obstacle $K$ is star-shaped or directionally convex, then
$(u_1(t,x),u_2(t,x))\to(1,0)$  locally uniformly in
$x\in\bar\Omega$ as $t\to+\i$,
and
\begin{align*}
(u_1(t,x),u_2(t,x))-(\Phi_1(x\cdot e+ct),\Phi_2(x\cdot e+ct))\ \text{ uniformly in }x\in\bar\Omega \text{ as }t\to+\i.
\end{align*}

Based on  above conclusions, we will further study the dynamic behavior of the Lotka-Volterra competition-diffusion system \eqref{clv} in the exterior domains in future research.

\section*{Acknowledgments}
The first author would like to give her sincere thanks to China
Scholarship Council for a 18-month visit of Aix Marseille University.
Her work was also  partially supported by NSF
of China (12171120).
The second author's work was partially
supported by NSF of China (12171120).

\section*{Date availability statements}
We do not analyse or generate any datasets, because our work proceeds within a theoretical and mathematical approach.

\section*{Conflict of interest}
There is no conflict of interest to declare.

\end{document}